\newcommand{\id}{{\rm id}}
\newcommand{\Vect}{\mathop{{\rm Vect}}\nolimits}
\newcommand{\Aut}{\mathop{{\rm Aut}}\nolimits}
\def\T{\mathbb{T}}
\def\Ad{{\rm Ad}}         
\def\id{{\rm id}}  \def\ad{{\rm ad}} 
\def\pr{$\bf{Proof.}$\quad}
\def\fin{\hfill$\square$\\}
\def\Bis{{\rm Bis}}
\def\Tau{{\mathcal T}}
\newtheorem{theo}{Theorem}[section]
\newtheorem{Defi}[theo]{{\bf Definition}}
\newenvironment{defi}{\begin{Defi} \normalfont}{\end{Defi}}
\newtheorem{Prop}[theo]{{\bf Proposition}}
\newenvironment{prop}{\begin{Prop} \normalfont}{\end{Prop}}
\newtheorem{Cor}[theo]{{\bf Corollary}}
\newenvironment{cor}{\begin{Cor} \normalfont}{\end{Cor}}
\newtheorem{Lem}[theo]{{\bf Lemma}}
\newenvironment{lem}{\begin{Lem} \normalfont}{\end{Lem}}
\newtheorem{pro}[theo]{Problem}
\newtheorem{Exa}[theo]{{\bf Example}}
\newtheorem{Rem}[theo]{{\bf Remark}}
\newenvironment{rem}{\begin{Rem} \normalfont}{\end{Rem}}
\begin{document}
\title{Lie rackoids integrating Courant algebroids} 

\author{Camille Laurent-Gengoux\\
        Universit\'e de Lorraine\\
camille.laurent-gengoux@univ-lorraine.fr\\
\and Friedrich Wagemann\\
     Universit\'e de Nantes\\
wagemann@math.univ-nantes.fr}

\maketitle

\begin{abstract}
We construct an infinite dimensional Lie rackoid $Y$ which hosts an integration
of the standard Courant algebroid. As a set,
$Y={\mathcal C}^{\infty}([0,1],T^*M)$ for a compact manifold $M$. The rackoid
product is by automorphisms of the Dorfman bracket. The first part of the
article is a study of the Lie rackoid $Y$ and its tangent Leibniz algebroid
a quotient of which is the standard Courant algebroid. In a second part, we
study the equivalence relation related to the quotient on the rackoid level
and restrict then to an integrable Dirac structure. We show how our integrating
object contains the corresponding integrating Weinstein Lie groupoid in the
case where the Dirac structure is given by a Poisson structure. 
\end{abstract}

\section*{Introduction}

Courant algebroids, introduced by Courant \cite{Cou} in 1990, permit to treat
Poisson
structures and presymplectic structures on an equal footing. Courant algebroids
are not Lie algebroids, but contain lots of Lie algebroids, namely the Lie
algebroids associated to Dirac structures. Courant and Dirac geometry gained
much interest during the last 20 years and have become central subjects in
Poisson geometry.

Courant algebroids are not a classical differential geometric structure, as
they are not related to Lie groups or Lie algebras, but rather to Leibniz
algebras. A Courant algebroid carries a bracket which can be either expressed
as an antisymmetric bracket which does not satisfy the Jacobi identity (Courant
bracket \cite{Cou}), or as a non-antisymmetric bracket satisfying the
Leibniz identity
(Dorfman bracket \cite{Dor}). Therefore
classical integration theory for Lie algebras or Lie algebroids does not apply,
and the search for a global differential geometric object integrating a given
Courant algebroid was wide open until lately.

Mehta-Tang \cite{MT}, Li Bland-Severa \cite{LS} and Sheng-Zhu \cite{SZ}
attacked the problem of integrating
Courant algebroids and found solutions in the realm of graded differential
geometry. It is not easy to work with these solutions and in particular, the
authors did not give hints about how to extract from their global objects the
Lie groupoids integrating integrable Dirac structures in the Courant algebroid -
concerning the latest progress in this direction, see \cite{MehTan}.
We believe that the integrating object which we propose in the present article
remedies this and reveals the Lie groupoids corresponding to integrable Dirac
structures more easily, to the cost of working with an unusual, but natural
differential geometric structure.  

In the present article, we introduce an object integrating the standard Courant
algebroid on $\T M=TM\oplus T^*M$ on a compact manifold $M$. As it comes from an
integration of the underlying Leibniz algebroid $\T M$ and in some sense
Leibniz algebras can be integrated into Lie racks in the same way as Lie
algebras can be integrated into Lie groups, we dubbed it a {\it Lie rackoid},
see \cite{LauWag} where this notion appears for the first time.  

In order to prepare the basic facts about Lie rackoids, compare our article
\cite{LauWag}. We give there the basic definitions concerning Lie rackoids and
show that the tangent object of a Lie rackoid is a Leibniz algebroid and every
Lie groupoid has an underlying Lie rackoid. The intuition which goes with Lie
rackoids is that in the same way as a rack retains from the structure of a group
only the conjugation (forgetting the group product), a Lie rackoid can be
viewed as a Lie groupoid where one retains only the conjugation. It is thus
natural that the product structure in a Lie rackoid brings bisections of a
(pre)category into play. 

In a nutshell, a Lie rackoid is a
smooth precategory $\xymatrix{Y\ar@<2pt>[r]^t\ar@<-2pt>[r]_s& M}$ with a
rack product on its bisections and a compatible rack action of bisections on
elements of $Y$. The bisections of a precategory over a compact manifold $M$
form a Fr\'echet manifold (see \cite{SchWoc}, \cite{LauWag} Proposition 3.3)
and the rack product on bisections is supposed to be
smooth with respect to this manifold structure and the rack action is supposed
to be by diffeomorphisms of $Y$.
Our basic object in the present article is the Lie
rackoid of (smooth) cotangent maps $Y=\{a:[0,1]\to T^*M\}$. The precategory
$Y$ has as its source and target the projections to the starting- and
end-point of the path $a$. The bisections of $Y$ are of the form
$\Sigma=(\phi,\eta)$ where $\phi_0=\id_M$ and $\phi_1$ is some diffeomorphism
of $M$, and $\eta$ is a path of cotangent vectors. The matching between
attachment points is such that $\phi_s^*\eta_s$ is a $1$-form on $M$ for each
$s\in[0,1]$. We denote
$$\beta_{\Sigma}=\int_0^1\phi_s^*\eta_s\,ds$$
the $1$-form on $M$ naturally attached to the bisection $\Sigma$. The rack
product on bisections reads then for $\Tau=(\psi,\zeta)$
$$\Sigma\rhd\Tau = (\phi_1^{-1}\circ\psi_t\circ\phi_1,\phi_1^*(\zeta_t-
i_{\dot{\psi}_t}(\phi_1^{-1})^*d\beta_{\Sigma})),$$
while the rack action of a bisection on an element $a=(\gamma,\theta)$ of $Y$
is given by
$$\Sigma\rhd a\,=\,(\phi^{-1}_1(\gamma),\phi^*_1(\theta-i_{\dot{\gamma}_t}
(\phi_1^{-1})^*d\beta_{\Sigma})).$$
Note that the group of automorphisms of the Dorfman bracket of the standard
Courant algebroid on $\T M$ is the semidirect product of diffeomorphisms
and closed $2$-forms \cite{BCG}, and such an automorphism
$(\phi_1,d\beta_{\Sigma})$ acts exactly in the way displayed above. 

The Lie rackoid $Y$ is an infinite-dimensional Lie rackoid (first main theorem:
Theorem \ref{main_theorem}) whose infinitesimal Leibniz algebroid $A$ admits a
quotient isomorphic to the standard Courant algebroid (Proposition
\ref{quotient_proposition}). In Section 5, we define an equivalence relation
giving rise to a Fr\'echet foliation (Corollary
\ref{corollary_Frechet_foliation}) which corresponds to the passage to the
quotient on the global level. In our
second main theorem, we show that the Lie rack of bisections ${\rm Bis}(Y)$
factors to a quotient rack ${\rm Bis}(Y)/\sim$ which has a manifold structure
and the correct tangent space in $(\id,0)\in{\rm Bis}(Y)$, see Theorem
\ref{second_main_theorem}.

We will call a subbundle $D\subset \T M$ a {\it Dirac structure} if it is
maximally isotropic and its sections are closed under the Dorfman bracket.
In this case, the Dorfman bracket reduces to a Lie bracket on the section of
the subbundle $D$ and $D$ becomes a Lie algebroid.  
We call a Dirac structure $D$ {\it integrable} if the underlying Lie algebroid
is integrable, i.e. integrates into a Lie groupoid. 
The identification of the integrable Dirac structures (inside $A$ and its
quotient)
and the corresponding Lie groupoids (inside $Y$ and its quotient) takes the
following form, see Section 5.2. Given a Dirac structure
$D\subset\T M$, there are subobjects $A(D)\subset A$ and $Y_D\subset Y$
defined by  
$$A(D)_m:=\{(X_t,\alpha_t)\in A_m\,|\,(\dot{X}_t,\alpha_t)\in D_m\,\,\,\forall
t\in[0,1]\}$$
and
$$Y_D:=\{(\gamma_t,\alpha_t)\in Y\,|\,(\dot{\gamma},\alpha)\in D_{\gamma(t)}\,\,\,
\forall t\in[0,1]\}.$$
We strongly believe that the identification of integrable Dirac structures
inside our
Leibniz algebroid $A$ and of their associated Lie groupoids inside our Lie
rackoid work for a general integrable Dirac structure $D$, but for the moment,
we can show these only for Dirac structures coming from integrable Poisson
structures on $M$.

For this class of Dirac structures, we show that $Y_D$ is a subrackoid
(Proposition \ref{prop_Y_D_subrackoid}) and that with respect to the above
equivalence
relation, $Y_D/\sim$ is a Lie rackoid isomorphic to the Lie rackoid underlying
the symplectic Lie groupoid (smooth Weinstein groupoid) integrating the Dirac
structure $D$ (Corollary \ref{corollary_Y_D_quotient_rackoid} and
Proposition \ref{proposition_identification_quotient_Weinstein_groupoid}).

The link between $D$ and $Y$ is facilitated by the intrinsic symplectic
geometry of $Y$, see Section 4. In terms of the symplectic $2$-form $\omega$
on $Y$, inherited from $T^*M$, the $2$-form $d\beta_{\Sigma}$ is just the
pull-back of $\omega$ by the bisection $\Sigma$, viewed as a section of the
source map.

\vspace{.5cm}

{\bf Acknowledgements:} This work has been done during the last five years and
has been the subject of many visits of FW to Metz and Paris, as well as visits
of CLG to Max Planck Institut in Bonn. We thank all the involved institutions
for their kind hospitality and excellent working conditions. We thank
furthermore Christoph Wockel who has been part of this project for soùme time
for his insights and contributions.

\section{The path rackoid} \label{section_path_rackoid}

We refer to \cite{LauWag}, Section $3$ for the basic definitions concerning
Lie rackoids.
The goal of the present section is to define the path Lie rackoid which is part
of the Lie rackoid of cotangent paths integrating the standard Courant
algebroid. The
path rackoid serves here as an introduction to the rackoid of cotangent paths.

Let $M$ be a finite dimensional compact manifold. 
Let $X$ be the set of smooth paths $\gamma:[0,1]\to M$.Then $X$ is a smooth
Fr\'echet manifold with respect to the ${\mathcal C}^{\infty}$-topology (with
left resp.
right derivatives of all orders in $0\in[0,1]$ resp. $1\in[0,1]$),
see \cite{Mic} Theorem 10.4, p. 91. It becomes
a smooth precategory introducing source and target maps $s,t:X\to M$ which
send a path $\gamma$ to its starting point $\gamma(0)$ and its end point
$\gamma(1)$ respectively. The unit map associates to $m\in M$ the constant
path in $m$.

A Lie rackoid structure (see \cite{LauWag} for definitions)
is a structure on the bisections of a
smooth pre-category. Therefore we determine in the following lemma the set of
bisections ${\rm Bis}(X)$. Note that ${\rm Bis}(X)$ carries a smooth Fr\'echet
manifold structure obtained as in Proposition 3.3 of \cite{LauWag}. Indeed,
by topologizing path spaces putting systematically the factor $[0,1]$ to the
left (using the bijection from the exponential law
${\mathcal C}^{\infty}(A\times B,C)={\mathcal C}^{\infty}(A,{\mathcal C}^{\infty}
(B,C))$), one deals only with finite-dimensional compact manifolds.      

\begin{lem}  \label{lemma_bisections_path_rackoid}
The Fr\'echet manifold of smooth bisections ${\rm Bis}(X)$ of
the precategory $s,t:X\to M$ is 
$${\rm Bis}(X)\,=\,\{\gamma:[0,1]\to{\mathcal C}^{\infty}(M,M)\,|\,\gamma_0=
{\rm id}_M\,\,{\rm and}\,\,\gamma_1\,\,{\rm diffeomorphism}\,\}.$$
\end{lem}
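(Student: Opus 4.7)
The plan is to show that the correspondence is a direct consequence of the exponential law for smooth maps, once the definition of bisection is properly unpacked.

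First I would recall that, by the definition of a bisection of a smooth precategory $s,t \colon X \to M$, an element $\sigma \in {\rm Bis}(X)$ is a smooth map $\sigma \colon M \to X = {\mathcal C}^{\infty}([0,1], M)$ satisfying $s \circ \sigma = \id_M$ and such that $t \circ \sigma \colon M \to M$ is a diffeomorphism. The goal is to identify such $\sigma$ with a smooth path $\gamma \colon [0,1] \to {\mathcal C}^{\infty}(M,M)$ satisfying the stated endpoint conditions.

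Next I would apply the exponential law in the form already invoked in the paragraph preceding the lemma, namely
$$
{\mathcal C}^{\infty}(M, {\mathcal C}^{\infty}([0,1], M)) \;\cong\; {\mathcal C}^{\infty}(M \times [0,1], M) \;\cong\; {\mathcal C}^{\infty}([0,1], {\mathcal C}^{\infty}(M, M)),
$$
where the last identification (putting the interval factor to the left) holds because $M$ is finite dimensional and compact and $[0,1]$ is compact, so one is in the setting in which smooth Fr\'echet exponential laws are valid (see \cite{Mic}). Under this correspondence, a bisection $\sigma$ is identified with the map $\gamma_t(m) := \sigma(m)(t)$.

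Then I would translate the two conditions through this identification. The source condition $s(\sigma(m)) = \sigma(m)(0) = m$ for all $m \in M$ becomes exactly $\gamma_0 = \id_M$. The target condition that $t \circ \sigma \colon m \mapsto \sigma(m)(1) = \gamma_1(m)$ be a diffeomorphism of $M$ becomes precisely the requirement that $\gamma_1 \in \Diff(M)$. Conversely, any smooth path $\gamma$ satisfying these two endpoint conditions defines, via the inverse exponential law, a smooth map $\sigma \colon M \to X$ whose image $\sigma(m)$ is a smooth path from $m$ to $\gamma_1(m)$, hence a bisection.

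The only potentially delicate point is the compatibility of the Fr\'echet manifold structures on the two sides, but this is precisely the content of Proposition 3.3 of \cite{LauWag} together with the exponential law recalled in the preceding paragraph, so this is not really an obstacle. Thus the proof reduces to the two bullet-style verifications above, and the equality of sets lifts automatically to an equality of Fr\'echet manifolds.
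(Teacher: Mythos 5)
Your proof is correct and follows essentially the same route as the paper's: both unpack the definition of a bisection and use the exponential law to reidentify $\sigma\colon M\to{\mathcal C}^{\infty}([0,1],M)$ as a path $\gamma\colon[0,1]\to{\mathcal C}^{\infty}(M,M)$, whereupon the source and target conditions become the endpoint conditions $\gamma_0={\rm id}_M$ and $\gamma_1$ a diffeomorphism. Your version is merely more explicit about the exponential-law identifications and the compatibility of Fr\'echet structures, which the paper leaves implicit.
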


\begin{proof}
A bisection of the precategory $Y$ is by definition a map
$\sigma:M\to Y$ such that 
$s\circ \sigma=\id_M$ and $t\circ\sigma$ is a diffeomorphism of $M$.
Let us view maps from $M$
to ${\mathcal C}^{\infty}([0,1],M)=X$ as maps from $[0,1]$ to
${\mathcal C}^{\infty}(M,M)$, i.e.
as paths in ${\mathcal C}^{\infty}(M,M)$ under this identification.
The requirements on such a path
$\gamma$ in
${\mathcal C}^{\infty}(M,M)$ to be a bisection are clearly that
$\gamma_0=\id_M$ and
that $\gamma_1$ is a diffeomorphism of $M$. 
\end{proof}

A Lie rackoid is, roughly speaking (see Definition 3.5 in \cite{LauWag})
a smooth pre-category $X$ together with a smooth rack product on its manifold of
bisections and a smooth rack action on $X$ itself.

In the case at hand, we define on ${\rm Bis}(X)$ a rack product for two
bisections $\varphi$ and $\psi$ by   
$$(\psi\rhd\varphi)_{t,m}\,=\,\psi_1^{-1}\varphi_{t,\psi_1(m)},$$
for all $t\in[0,1]$ and all $m\in M$,
and a rack action for a bisection $\psi$ and an element $\gamma\in X$ by 
$$(\psi\rhd\gamma)(t)\,=\,(t\mapsto\psi_1^{-1}(\gamma(t))).$$

Let us check the self-distributivity relation for three bisections 
$\varphi$, $\psi$ and $\chi$:
\begin{eqnarray*}
(\chi\rhd(\psi\rhd\varphi))_{t,m}&=&\chi^{-1}_1\psi_1^{-1}
\varphi_{t,\psi_1(\chi_1(m))} \\
&=& \chi_1^{-1}\psi_1^{-1}\chi_1\chi_1^{-1}\varphi_{t,\chi_1\chi_1^{-1}\psi_1\chi_1(m)}\\
&=&((\chi\rhd\psi)\rhd(\chi\rhd\varphi))_{t,m}.
\end{eqnarray*}

This shows that $X$ becomes a (unital, infinite dimensional) Lie rackoid.  

\begin{prop}
There is a surjective morphism of rackoids from $X$ to the rackoid underlying
the fundamental groupoid associating to a path its homotopy class. 
\end{prop}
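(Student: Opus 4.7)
The plan is to define $\pi\colon X \to \Pi_1(M)$ on elements by sending a smooth path $\gamma\colon[0,1]\to M$ to its homotopy class $[\gamma]$ rel endpoints, regarded as a morphism of the fundamental groupoid $\Pi_1(M)$. That $\pi$ is a morphism of precategories is immediate: source, target, and units (constant paths versus identity arrows) are preserved because all representatives of a homotopy class share the same endpoints.

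Next I would describe the induced map on bisections. Thanks to Lemma~\ref{lemma_bisections_path_rackoid}, a bisection $\varphi\in\Bis(X)$ is a path $t\mapsto\varphi_t$ in ${\mathcal C}^\infty(M,M)$ with $\varphi_0=\id_M$ and $\varphi_1\in\Aut(M)$. It yields a bisection $\tilde\varphi$ of $\Pi_1(M)$ by
$$\tilde\varphi(m)\,=\,[t\mapsto \varphi_t(m)],$$
whose associated target diffeomorphism is exactly $\varphi_1$. One then has to verify that $\pi$ intertwines the two rackoid structures. The decisive observation is that both the rack product $\psi\rhd\varphi$ on $\Bis(X)$ and the action $\psi\rhd\gamma$ on elements depend on $\psi$ only through the diffeomorphism $\psi_1$, and the rackoid underlying a Lie groupoid (as constructed in \cite{LauWag}) is obtained by conjugation by a bisection, which on objects of $\Pi_1(M)$ induces precisely this same diffeomorphism. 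Writing out $\pi(\psi\rhd\gamma)=[t\mapsto\psi_1^{-1}(\gamma(t))]$ and unpacking the conjugation formula in $\Pi_1(M)$ then gives equality on the nose on elements, and a homotopy-class equality on bisections after comparing $(\psi\rhd\varphi)_{t,m}=\psi_1^{-1}\varphi_{t,\psi_1(m)}$ with the analogous conjugate in the fundamental groupoid.

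For surjectivity I would invoke the standard smoothing theorem: every continuous path rel endpoints is homotopic to a smooth one, hence $\pi$ hits every morphism of $\Pi_1(M)$, and the map induced on bisections is surjective by smoothly lifting any isotopy class of diffeomorphisms whose endpoint is in the chosen component.

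The main obstacle is the compatibility with the rack operations, which amounts to carefully matching the explicit conjugation-of-bisections formulas in $\Bis(X)$ with the general construction of the rackoid underlying a Lie groupoid applied to $\Pi_1(M)$; once the bookkeeping of homotopy classes is done, everything reduces to the diffeomorphism-level identity $\psi_1^{-1}\varphi_t\psi_1\circ\psi_1^{-1}=\psi_1^{-1}\varphi_t$, combined with the self-distributivity computation already carried out in the text.
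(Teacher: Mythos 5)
The paper states this proposition without proof, so there is nothing on the authors' side to compare against; judged on its own terms, your outline has the right shape (the map $\gamma\mapsto[\gamma]$, the induced map on bisections, smoothing for surjectivity), but the one step carrying all the content --- compatibility with the rack operations --- is not actually established by the argument you give. You claim that both operations ``depend on $\psi$ only through $\psi_1$'' and that conjugation in $\Pi_1(M)$ ``induces the same diffeomorphism on objects.'' The first claim is false for the $\Pi_1(M)$ side as stated: conjugation by the bisection $\tilde{\psi}(m)=[t\mapsto\psi_t(m)]$ of the fundamental groupoid sends an arrow $[\gamma]\colon a\to b$ to $\tilde{\psi}(\psi_1^{-1}(b))^{-1}\cdot[\gamma]\cdot\tilde{\psi}(\psi_1^{-1}(a))$ (composition read right to left), which a priori depends on the homotopy classes of all the paths $t\mapsto\psi_t(m)$ and not only on $\psi_1$; take $\psi_1=\id_M$ and $\pi_1(M)$ nonabelian, and this formula is conjugation by a possibly nontrivial loop, while $\pi(\psi\rhd\gamma)=[\gamma]$. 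Agreement on objects says nothing about agreement on arrows, and the ``diffeomorphism-level identity'' you invoke at the end is a tautology that does not touch the homotopy-theoretic issue.

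What is actually needed --- and what makes the proposition true --- is the square lemma applied to the smooth map $F(s,t)=\psi_s(\psi_1^{-1}(\gamma(t)))$ on $[0,1]^2$: its restrictions to the four sides are $\psi_1^{-1}\circ\gamma$, $\gamma$, $\tilde{\psi}(\psi_1^{-1}(a))$ and $\tilde{\psi}(\psi_1^{-1}(b))$, and comparing the two ways around the square yields precisely $[\psi_1^{-1}\circ\gamma]=\tilde{\psi}(\psi_1^{-1}(b))^{-1}\cdot[\gamma]\cdot\tilde{\psi}(\psi_1^{-1}(a))$, i.e.\ $\pi(\psi\rhd\gamma)=\tilde{\psi}\rhd\pi(\gamma)$. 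It is exactly the existence of the connecting family $\psi_s$ with $\psi_0=\id_M$ that kills the apparent extra dependence on the bisection; the same homotopy, applied fibrewise in $m$, handles the product of bisections. With that lemma inserted your proof closes; without it, the central compatibility is only asserted, not proved.
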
 

\section{The Lie rackoid of cotangent paths}

In this section, we define the central object of this article, namely the Lie
rackoid of cotangent paths. This Lie rackoid and a quotient of it are the
object we propose for an integration of the standard Courant algebroid. 
The Lie rackoid which we have constructed in the
previous section constitues the first component of the Lie rackoid of
cotangent paths.

\subsection{Definition of the Lie rackoid of cotangent paths}

Let $M$ be a finite dimensional compact manifold and let $\pi:T^*M\to M$ 
denote its cotangent bundle.  Consider the set of smooth
{\it cotangent paths} in $M$, i.e.
$$Y=\{(\gamma,\eta):[0,1]\to T^*M\,|\,\eta\,\,\,\,{\rm smooth}\}.$$
Here the actual cotangent path is $\eta$ and $\eta$ contains the {\it base path}
$\gamma=\pi\circ\eta$. For better readability of the upcoming formulae, the
splitting into base path and (total) cotangent path maybe helpful.   

The set $Y$ is a smooth Fr\'echet manifold with respect to the
${\mathcal C}^{\infty}$-topology and a smooth precategory over $M$ with
respect to
the compositions $\pi\circ{\rm ev}_0$ and $\pi\circ{\rm ev}_1$ of the
evaluation in $0$ and $1$ with the bundle projection $\pi$.

Again, in order to define a Lie rackoid structure, we determine the manifold
of bisections ${\rm Bis}(Y)$ of the pre-category $Y$. 

\begin{lem}
The bisections ${\rm Bis}(Y)$ are the pairs $\Sigma=(\phi,\eta)$ where the
first component $\phi$ is an element of the path Lie rackoid. The second
component $s\mapsto \eta_s$ is a section of the bundle $\phi_t^*T^*M$
to which we can associate a path of $1$-forms $s\mapsto\tilde{\eta}_s$ on
$M$ by setting
$$T_mM\ni X\mapsto \tilde{\eta}_s(X):=\eta_{s,m}(T_m\phi_s(X)).$$
\end{lem}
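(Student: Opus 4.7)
The plan is essentially to unfold the definition of a bisection of a smooth precategory, apply the exponential law, and identify the resulting object with a section of a pullback bundle.

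First, I would start from the definition: a bisection of $Y$ is a smooth map $\sigma\colon M\to Y$ such that $\pi\circ{\rm ev}_0\circ\sigma={\rm id}_M$ and $\pi\circ{\rm ev}_1\circ\sigma$ is a diffeomorphism of $M$. Writing $\sigma(m)=(\gamma^m,\eta^m)$ with $\gamma^m=\pi\circ\eta^m$ the base path, these conditions translate respectively to $\gamma^m(0)=m$ and $m\mapsto\gamma^m(1)$ being a diffeomorphism.

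Next, I would apply the exponential law ${\mathcal C}^{\infty}(M,{\mathcal C}^{\infty}([0,1],T^*M))\cong{\mathcal C}^{\infty}([0,1],{\mathcal C}^{\infty}(M,T^*M))$ exactly as in the proof of Lemma \ref{lemma_bisections_path_rackoid}, in order to exchange the roles of $[0,1]$ and $M$. This turns $\sigma$ into a smooth path $s\mapsto\eta_s\in{\mathcal C}^{\infty}(M,T^*M)$. Composing with the projection $\pi$ yields a smooth path $s\mapsto\phi_s:=\pi\circ\eta_s$ in ${\mathcal C}^{\infty}(M,M)$. The two bisection conditions now read $\phi_0={\rm id}_M$ and $\phi_1\in{\rm Diff}(M)$, which is exactly the description of ${\rm Bis}(X)$ from Lemma \ref{lemma_bisections_path_rackoid}. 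Hence the first component $\phi$ belongs to the path Lie rackoid.

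Then I would interpret the second component. By construction $\pi\circ\eta_s=\phi_s$, so for every $m\in M$ the value $\eta_s(m)$ lies in $T^*_{\phi_s(m)}M$. This is precisely the statement that $\eta_s$ is a section of the pullback bundle $\phi_s^*T^*M$ (here indexed by $s$, not $t$ as in the typo of the statement). Smoothness of $s\mapsto\eta_s$ as a section is inherited from the smoothness coming from the exponential law. The associated path of $1$-forms on $M$ is then obtained by composing the covector $\eta_{s,m}\in T^*_{\phi_s(m)}M$ with the differential $T_m\phi_s\colon T_mM\to T_{\phi_s(m)}M$, giving the formula displayed in the statement.

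There is no genuine obstacle here; the lemma is a pure unpacking of definitions. The only point requiring a bit of care is the smoothness transfer under the exponential law in the Fr\'echet setting, but this is exactly the mechanism already used in Proposition 3.3 of \cite{LauWag} and in the preceding lemma, so it can simply be invoked. In particular, no non-degeneracy or immersion hypothesis on $\phi_s$ is needed to define the path of $1$-forms $\tilde\eta_s$, since only the tangent map $T\phi_s$ (and not its invertibility) is used in the formula.
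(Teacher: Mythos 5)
Your proposal is correct and follows essentially the same route as the paper: the paper's proof simply refers back to the argument for Lemma \ref{lemma_bisections_path_rackoid} (the exponential-law identification of maps $M\to Y$ with paths in ${\mathcal C}^{\infty}(M,T^*M)$) for the first component, and declares the second component immediate from the definitions, which is exactly the unpacking you carry out. Your version merely spells out the details the paper leaves implicit, including the correct observation that no invertibility of $\phi_s$ is needed to define $\tilde\eta_s$.
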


\begin{proof}
The first part is very similar to the proof of Lemma
\ref{lemma_bisections_path_rackoid}, the second part is immediate from the
definitions.     
\end{proof}

\begin{rem}
We denote the $1$-form $\tilde{\eta}_s$ by $\phi^*_s\eta_s$.
Observe that this is an abuse of notation. 
In fact, the {\it value} of $\eta_s$ at $m\in M$, i.e. $\eta_{s,m}$,
is already a cotangent vector at 
$\phi_s(m)$, but the tangent vector $X$ at $m$ has to be taken to
$\phi_s(m)$ by $T_m\phi_s$. Thus $\eta_s$ is somehow the {\it halfway}
pullback $\phi^*_s\eta_s$. This is a ntural operation on the bundle
$\phi_t^*T^*M$.  
\end{rem}

In order to define the Lie rackoid, we now define a rack action for a
bisection $\Sigma$ acting on a cotangent path
$a\in Y$. Composing $a$ with the bundle projection $\pi:T^*M\to M$ gives 
an underlying path $\gamma:=\pi\circ a$, $[0,1]\ni s\mapsto\gamma_s$,
on $M$ with 
cotangent vectors $\theta_{s}$ attached to $\gamma_s$ for all $s\in[0,1]$.   
We let the bisection $\Sigma$ act on the given cotangent path
$(\gamma,\theta)$ according to the following formula:

\begin{equation}  \label{rackoid_structure}
\Sigma\rhd a\,=\,(\phi^{-1}_1(\gamma),\phi^*_1(\theta-i_{\dot{\gamma}_t}
(\phi_1^{-1})^*d\big(\int_0^1
\phi_s^* \eta_{s}\,ds\big))).
\end{equation}

\begin{rem}  \label{remark_automorphisms_standard_Courant_algebroid}
Let us comment on the algebraic form of the bracket operation.
The bracket in Equation (\ref{rackoid_structure})
is composed of the natural action
by diffeomorphisms on the base path and on the cotangent path $\theta$. Added
to this is the {\it gauge transformation} using a certain exact $2$-form
$d\int_0^1\phi_s^* \eta_{s}\,ds$. To understand this better, observe that
there is a map $\varphi:Y\to {\mathcal C}^{\infty}([0,1],\T M)$ which sends 
elements of $(\gamma,\theta)\in Y$ to $(\dot{\gamma},\theta)\in
{\mathcal C}^{\infty}([0,1],\T M)$, where $\T M=TM\oplus T^*M$ is the extended
tangent bundle of $M$. Recall the fact (see \cite{BCG}, Proposition 2.3)
that the automorphism group of the standard Courant
algebroid on the extended tangent bundle
$\T M$ is the semidirect product $\Omega^2_{\rm cl}(M)\rtimes{\rm Diff}(M)$
of ${\rm Diff}(M)$ and $\Omega^2_{\rm cl}(M)$
where diffeomorphism and closed $2$-form act exactly in this way. We therefore
conclude that the action in Equation (\ref{rackoid_structure}) is sent under
$\varphi$ to the natural action of an automorphism of the standard Courant
algebroid. 
\end{rem}

\begin{defi}  \label{definition_beta_Sigma}
The $1$-form $\int_0^1\phi_s^* \eta_{s}\,ds$ on $M$ is
canonically associated to the bisection $\Sigma=(\phi,\eta)$. We denote it by
definition as 
$$\beta_{\Sigma}:=\int_0^1\phi_s^* \eta_{s}\,ds.$$
\end{defi}

\begin{lem}
Formula (\ref{rackoid_structure}) defines an element of $Y$,
i.e. a cotangent path. 
\end{lem}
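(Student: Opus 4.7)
The plan is to unpack the right-hand side of (\ref{rackoid_structure}) into its base-path and covector components and verify, for each time $t$, that the covector is attached to the correct base point, so that the resulting pair describes a well-defined smooth path in $T^*M$. Set
\[
\tilde{\gamma}_t := \phi_1^{-1}(\gamma_t), \qquad \tilde{\theta}_t := \phi_1^*\bigl(\theta_t - i_{\dot{\gamma}_t}(\phi_1^{-1})^*d\beta_\Sigma\bigr).
\]
The three items to verify are then: (a) $\tilde{\gamma}$ is a smooth path in $M$; (b) $\pi\circ\tilde{\theta} = \tilde{\gamma}$, i.e.\ each $\tilde{\theta}_t$ sits in $T^*_{\tilde{\gamma}_t}M$; and (c) $\tilde{\theta}$ is smooth in $t$ as a map to the total space $T^*M$.

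Item (a) is immediate from smoothness of $\gamma$ and the diffeomorphism $\phi_1$. Item (b), which carries the geometric content of the lemma, I would handle by tracking base points through each operation. Granted that $\beta_\Sigma$ makes sense as a 1-form on $M$, the pullback $(\phi_1^{-1})^*d\beta_\Sigma$ is a 2-form on $M$; its contraction with $\dot{\gamma}_t \in T_{\gamma_t}M$ is a covector at $\gamma_t$, so $\theta_t - i_{\dot{\gamma}_t}(\phi_1^{-1})^*d\beta_\Sigma$ lies in $T^*_{\gamma_t}M$. The cotangent map $\phi_1^* : T^*_{\phi_1(m)}M \to T^*_m M$, evaluated at $m = \tilde{\gamma}_t = \phi_1^{-1}(\gamma_t)$, then deposits this covector in $T^*_{\tilde{\gamma}_t}M$, giving the required $\pi\circ\tilde{\theta}=\tilde{\gamma}$.

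For (c), and in order to justify the manipulation in (b), the only non-routine ingredient is recognizing $\beta_\Sigma = \int_0^1 \phi_s^*\eta_s\,ds$ as a genuine smooth 1-form on $M$; this is the main potential obstacle, since the intermediate $\phi_s$ need not be diffeomorphisms and $\phi_s^*\eta_s$ is only defined via the halfway-pullback convention $(\phi_s^*\eta_s)_m(X) = \eta_{s,m}(T_m\phi_s(X))$ from the preceding lemma. Joint smoothness of the bisection $\Sigma = (\phi,\eta)$ produces joint smoothness of $(s,m)\mapsto(\phi_s^*\eta_s)_m$, and integration over the compact interval $[0,1]$ yields a smooth 1-form, with $d\beta_\Sigma$ smooth by differentiation under the integral. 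Once $\beta_\Sigma$ is in hand, the remaining ingredients of $\tilde{\theta}_t$—pullback by $\phi_1^{-1}$, contraction with the smooth $\dot{\gamma}$, subtraction from $\theta$, and the final pullback by $\phi_1$—are all smooth in $t$, completing (c). In sum, the argument reduces to a base-point bookkeeping (which is essentially forced by the structure of the formula) plus the single analytic point that the average of the halfway pullbacks is a smooth 1-form on $M$.
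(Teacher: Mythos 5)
Your verification is correct. The paper states this lemma without any proof, treating it as immediate; your base-point bookkeeping and the single substantive observation --- that $\beta_{\Sigma}=\int_0^1\phi_s^*\eta_s\,ds$ is a genuine smooth $1$-form on $M$ even though the intermediate $\phi_s$ are not diffeomorphisms and $\phi_s^*\eta_s$ is only the halfway pullback --- supply exactly the routine details the authors chose to omit.
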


\begin{rem}
There is a similar rackoid product on tangent paths given by
$$\phi\rhd a\,=\,(\phi_1(\gamma),(\phi_1^{-1})^*(\theta)).$$
This is in some sense the {\it opposite} of the above rack product without
the {\it cocycle term}. It is easy to see that 
the associated Leibniz algebroid (see Theorem 5.3 in \cite{LauWag})
to this Lie rackoid is the vector bundle with typical fiber 
$$A_m\,=\,\{a:[0,1]\to T_mM\,|\,a(0)=0\}$$
and whose ancre map is the evaluation in $1$. The associated Leibniz bracket 
on the sections reads
$$[a(t),b(t)]\,=\,[a(1),b(t)].$$
This is closely related to the generalized Poisson bracket on the dual of a
Leibniz algebra of Dh\'erin-Wagemann \cite{DheWag}.
\end{rem} 

For the Lie rackoid structure on $Y$, we will also need a rack product on
${\rm Bis}(Y)$. In fact, it can be derived from the above rack action in
Equation (\ref{rackoid_structure}) by acting with a bisection on the bisection
{\it associated} to the corresponding element of $Y$.
This means that Formula (\ref{rackoid_structure}) induces a formula of the
bisection acting on another bisection by viewing a bisection as a subset of
$Y$ and applying the first formula point-by-point to the elements of $Y$
constituing it. This procedure permits in general to derive from some kind of
action on a precategory the corresponding kind of product on the bisections. 

\begin{lem}  \label{lemma_formula_action_bisection}
The formula for the action of a bisection on a bisection associated to Formula 
(\ref{rackoid_structure}) is
\begin{equation}  \label{action_bisection_on_bisection}
\Sigma\rhd\Tau = (\phi_1^{-1}\circ\psi_t\circ\phi_1,\phi_1^*(\zeta_t-
i_{\dot{\psi}_t}(\phi_1^{-1})^*d\beta_{\Sigma})),
\end{equation}
where $\Sigma=(\phi,\eta)$, $\Tau=(\psi,\zeta)$ and $\beta_{\Sigma}$ the
associated $1$-form of $\Sigma$ (cf Definition \ref{definition_beta_Sigma}).  
\end{lem}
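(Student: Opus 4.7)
The plan is to apply the general principle explained in the paragraph just before the lemma: a bisection of $Y$ is the same datum as a family $\{\Tau(m)\}_{m\in M}$ of elements of $Y$ satisfying $s(\Tau(m))=m$, and the rack action on a bisection is \emph{defined} by letting $\Sigma$ act on each $\Tau(m)$ via Formula (\ref{rackoid_structure}), then reassembling the resulting family back into a bisection. The work consists of carrying this out explicitly and recognizing the output as the formula (\ref{action_bisection_on_bisection}).

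The first step is to choose the correct reparameterization. Since (\ref{rackoid_structure}) sends the source of $a$ from $\gamma(0)$ to $\phi_1^{-1}(\gamma(0))$, to obtain the value $(\Sigma\rhd\Tau)(m')$ at a fixed $m'\in M$ I would set $m:=\phi_1(m')$ and apply the formula to $\Tau(m)$. Because $\psi_0=\id_M$, the element $\Tau(m)\in Y$ has base path $s\mapsto\psi_s(m)$ (starting at $m$) and cotangent component $s\mapsto\zeta_{s,m}$, so Formula (\ref{rackoid_structure}) applies with $\gamma_s=\psi_s(m)$ and $\theta_s=\zeta_{s,m}$.

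Next I would read off the two components and reassemble. The base path of $(\Sigma\rhd\Tau)(m')$ becomes
$$s\mapsto \phi_1^{-1}\bigl(\psi_s(\phi_1(m'))\bigr),$$
and letting $m'$ vary over $M$ recombines this family into the path of maps $t\mapsto \phi_1^{-1}\circ\psi_t\circ\phi_1$, which is the first component of (\ref{action_bisection_on_bisection}); its value at $t=0$ is $\id_M$ and at $t=1$ it is a diffeomorphism of $M$, so the bisection conditions are satisfied. The cotangent component produced by (\ref{rackoid_structure}) is
$$\phi_1^*\Bigl(\zeta_{s,\phi_1(m')}-i_{\frac{d}{ds}\psi_s(\phi_1(m'))}(\phi_1^{-1})^*d\beta_{\Sigma}\Bigr),$$
and interpreting $\tfrac{d}{ds}\psi_s(\phi_1(m'))$ as the value at $\phi_1(m')$ of the time-$s$ vector field $\dot\psi_s$ along $\psi_s$, and reassembling over $m'$, gives precisely the second component $\phi_1^*(\zeta_t-i_{\dot\psi_t}(\phi_1^{-1})^*d\beta_{\Sigma})$.

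The main obstacle is purely a bookkeeping one: I must keep consistent the two interpretations of the cotangent datum — as a genuine cotangent vector attached to a point of a base path in $M$, versus as the ``halfway pullback'' $1$-form on $M$ described in the remark following the bisections lemma — and propagate the change of variables $m=\phi_1(m')$ carefully through the pullback $\phi_1^*$ so that it matches the notation $\phi_1^*\zeta_t$ used in the statement. No further verification (such as a self-distributivity check) is needed, since the rack product on bisections is defined by this translation from the rack action on elements of $Y$.
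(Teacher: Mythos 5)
Your proposal is correct and follows essentially the same route as the paper's own proof: decompose the bisection $\Tau$ into the family $\{\Tau(m)\}_{m\in M}$ of elements of $Y$, apply Formula (\ref{rackoid_structure}) pointwise with the change of variables $m=\phi_1(m')$ (the paper phrases this as taking the path $\phi_1^{-1}(\psi^{\phi_1(m)})(t)$), and reassemble, treating the two components and the interpretation of $i_{\dot{\psi}_t}$ exactly as the paper does.
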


\begin{proof}
Let us discuss separately first and second component of the bisection.

Concerning the first component, we have to show that when we view $\psi_t$ in
$\phi_1^{-1}\circ\psi_t\circ\phi_1$ not as a family of maps 
in ${\mathcal C}^{\infty}(M,M)$, but rather as a subset of
$\pi\circ Y={\mathcal C}^{\infty}([0,1],M)$
and we apply to each point individually the formula 
$$(\phi\rhd\gamma)(t)=(t\mapsto \phi_1^{-1}(\gamma(t))),$$
then we obtain $\phi_1^{-1}\circ\psi_t\circ\phi_1$. To do this, look at 
$\psi\in{\mathcal C}^{\infty}([0,1],{\mathcal C}^{\infty}(M,M))=
{\mathcal C}^{\infty}(M,{\mathcal C}^{\infty}([0,1],M))$ as a subset
$\{\psi^m\}_{m\in M}\subset Y={\mathcal C}^{\infty}([0,1],M)$.
An individual $\psi^m$ may be viewed as a path $\gamma$ in the above formula
which we may therefore apply. Observe that in order to compare the result, we
must take in 
$$\phi\rhd\{\psi^m\}_{m\in M}=\{\phi_1^{-1}(\psi^m(t))\}_{m\in M,t\in[0,1]}$$
the path starting in $m\in M$, i.e. $\phi_1^{-1}(\psi^{\phi_1(m)})(t)$, and this
is exactly the path in ${\mathcal C}^{\infty}(M,M)$
which is described by $\phi_1^{-1}\circ\psi_t\circ\phi_1$.

In the same way, we also treat the second component. It is clear that the
cotangent vectors 
$\zeta_t$, brought back to the path $\phi_1^{-1}\circ\psi_t\circ\phi_1$ via
$\phi_1^*$, correspond to the cotangent vectors $\phi_1^*\theta$ in Equation
(\ref{rackoid_structure}) along the path. Concerning the second factor of the
second component, the terms are similar, with the exception of $i_{\dot{\psi}_t}$
which requires interpretation. Indeed, $\dot{\psi}_t$ is seen as associating
to each $t\in[0,1]$ a tangent vector to ${\mathcal C}^{\infty}(M,M)$ at
$\psi(t)$, i.e. as a family of tangent vectors to the path $\psi_t$.
The operation $\phi_1^*(i_{\dot{\psi}_t}(\phi_1^{-1})^*(-))$ renders it then the
insertion of a tangent vector to the path $\phi_1^{-1}\circ\psi_t\circ\phi_1$. 
\end{proof}   

Let us compute the canonical $1$-form associated to the bisection
$\Sigma\rhd\Tau$.
Recall that we denote by $\beta_{\Sigma}=\int_0^1\phi_s^* \eta_{s}\,ds$ the
$1$-form
associated to 
$\Sigma=(\phi,\eta)$ and denote by $\beta_{\Tau}=\int_0^1\psi_s^* \zeta_{s}\,ds$
the $1$-form associated to $\Tau=(\psi,\zeta)$. 

\begin{lem}  \label{1_form_rack_product}
The $1$-form associated to $\Sigma\rhd\Tau$ is 
$$\beta_{\Sigma\rhd\Tau}=\phi^*_1\beta_{\Tau}-\phi^*_1\int_0^1\psi_s^*i_{\dot{\psi}_s}
(\phi^{-1}_1)^*d\beta_{\Sigma}\,ds.$$
\end{lem}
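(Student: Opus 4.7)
The plan is to compute $\beta_{\Sigma\rhd\Tau}$ directly from Definition \ref{definition_beta_Sigma} applied to the explicit formula for $\Sigma\rhd\Tau$ given in Lemma \ref{lemma_formula_action_bisection}. Writing $\Sigma\rhd\Tau=(\tilde\phi,\tilde\eta)$ with $\tilde\phi_s=\phi_1^{-1}\circ\psi_s\circ\phi_1$ and $\tilde\eta_s=\phi_1^*(\zeta_s-i_{\dot\psi_s}(\phi_1^{-1})^*d\beta_\Sigma)$, Definition \ref{definition_beta_Sigma} gives
$$\beta_{\Sigma\rhd\Tau}\,=\,\int_0^1 \tilde\phi_s^*\tilde\eta_s\,ds.$$
Everything then reduces to identifying the composition of halfway pullbacks with the inner $\phi_1^*$ appearing in $\tilde\eta_s$.

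The key step is the functoriality identity $\tilde\phi_s^*\circ \phi_1^* = \phi_1^*\circ\psi_s^*$, which I would verify pointwise at $m\in M$: using the chain rule
$$T_m(\phi_1^{-1}\circ\psi_s\circ\phi_1)\,=\,T_{\psi_s(\phi_1(m))}\phi_1^{-1}\circ T_{\phi_1(m)}\psi_s\circ T_m\phi_1,$$
the factor $T\phi_1^{-1}$ cancels against the $T\phi_1$ coming from applying $\phi_1^*$ to $\zeta_s$ at the shifted base point, leaving exactly $T\phi_1$ followed by $T\psi_s$, which is the pointwise expression of $\phi_1^*\circ\psi_s^*$. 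This is the one place where one must be careful, because the symbols $\phi_1^*$ and $\psi_s^*$ refer to the abusive halfway pullback of Remark following Lemma 2.1 rather than to genuine pullbacks of 1-forms on $M$; however the chain-rule cancellation goes through without modification, and the same check applies to the 1-form $i_{\dot\psi_s}(\phi_1^{-1})^*d\beta_\Sigma$ in place of $\zeta_s$.

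With the functoriality relation in hand, the remainder is routine. Since $\phi_1^*$ is independent of $s$, it may be pulled out of the integral, and linearity splits the result into two pieces:
$$\beta_{\Sigma\rhd\Tau}\,=\,\phi_1^*\int_0^1\psi_s^*\zeta_s\,ds\,-\,\phi_1^*\int_0^1\psi_s^*\,i_{\dot\psi_s}(\phi_1^{-1})^*d\beta_\Sigma\,ds.$$
Recognising the first integral as $\beta_\Tau$ by Definition \ref{definition_beta_Sigma} yields the claimed identity. The main obstacle is thus purely notational — confirming once and for all that the halfway pullback is functorial under composition of the underlying maps — after which the lemma is a one-line consequence of linearity of the integral.
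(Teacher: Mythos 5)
Your proposal is correct and follows the paper's own route: the paper's proof consists precisely of the single displayed identity $\int_0^1(\phi_1^{-1}\circ\psi_s\circ\phi_1)^*\phi_1^*(\zeta_s-i_{\dot{\psi}_s}(\phi_1^{-1})^*d\beta_{\Sigma})\,ds=\phi^*_1\beta_{\Tau}-\phi^*_1\int_0^1\psi_s^*i_{\dot{\psi}_s}(\phi^{-1}_1)^*d\beta_{\Sigma}\,ds$, declared straightforward. You merely make explicit the chain-rule cancellation behind the halfway-pullback functoriality, which is a welcome but not essentially different elaboration.
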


\begin{proof}
This is straight forward:
$$\int_0^1(\phi_1^{-1}\circ\psi_s\circ\phi_1)^*\phi_1^*(\zeta_s-i_{\dot{\psi}_s}
(\phi_1^{-1})^*d\beta_{\Sigma})ds =
\phi^*_1\beta_{\Tau}-\phi^*_1\int_0^1\psi_s^*i_{\dot{\psi}_s}(\phi^{-1}_1)^*
d\beta_{\Sigma}\,ds.$$
\end{proof}

\subsection{Selfdistributivity of the rackoid product}

Now we show that the rackoid product proposed in Formula
(\ref{rackoid_structure}) satisfies the self-distributivity equation 
$$\Sigma\rhd(\Tau\rhd a)\,=\,(\Sigma\rhd\Tau)\rhd(\Sigma\rhd a).$$
This implies then that it defines a Lie rackoid structure on the set of 
cotangent paths. For this, we need the following lemma:

\begin{lem}  \label{homotopy_lemma}
Let $\psi_t$ be a smooth path of maps in ${\mathcal C}^{\infty}(M,M)$ 
with $\psi_0$ and $\psi_1$ diffeomorphisms and 
$\mu\in\Omega^1(M)$. Then we have
$$d\int_0^1\psi^*_si_{\dot{\psi_s}}\,d\mu\,ds\,=\,\psi_1^*
d\mu-\psi_0^*d\mu.$$
\end{lem}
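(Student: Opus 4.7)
The plan is to reduce the identity to the closedness of $\Psi^*(d\mu)$, where $\Psi\colon [0,1]\times M \to M$ is the evaluation map $\Psi(s,m) := \psi_s(m)$. Since $d$ commutes with pullback, $\Omega := \Psi^*(d\mu)$ is a closed $2$-form on the cylinder $[0,1]\times M$, and the lemma will fall out of expanding $d\Omega = 0$ in coordinates adapted to the $[0,1]$-factor.

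First I would decompose
$$\Omega \,=\, ds\wedge\alpha_s + \beta_s,$$
where $\alpha_s\in\Omega^1(M)$ and $\beta_s\in\Omega^2(M)$ are smooth $s$-families containing no $ds$. Restricting $\Omega$ along the slice inclusion $i_s\colon M\hookrightarrow\{s\}\times M$ gives $\beta_s = (\Psi\circ i_s)^*d\mu = \psi_s^*d\mu$. Contracting with $\partial_s$ gives $\alpha_s = \iota_{\partial_s}\Omega$; evaluated on $v\in T_mM$ this is $(d\mu)_{\psi_s(m)}(\dot\psi_s(m),\,T_m\psi_s(v))$, which is precisely the halfway pullback $\psi_s^*\iota_{\dot\psi_s}d\mu$ in the convention fixed in the preceding remark.

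Next I would expand $0 = d\Omega$ by splitting the exterior derivative on $[0,1]\times M$ as $d = d_M + ds\wedge\partial_s$. Using $d_M\beta_s = 0$ for degree reasons (a $3$-form on $M$ only appears in the $ds$-free component, and closedness of $d\mu$ forces the pure $M$-part to vanish), the surviving $ds$-component gives
$$\partial_s\beta_s \,=\, d_M\alpha_s, \qquad \text{i.e.} \qquad \frac{d}{ds}\,\psi_s^*d\mu \,=\, d\bigl(\psi_s^*\iota_{\dot\psi_s}d\mu\bigr).$$
Integrating this equality in $s$ from $0$ to $1$, commuting the exterior derivative on $M$ past the $s$-integral (legal since the integration variable is disjoint from the differentials on $M$), yields exactly
$$\psi_1^*d\mu - \psi_0^*d\mu \,=\, d\int_0^1 \psi_s^*\iota_{\dot\psi_s}d\mu\,ds.$$

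The only potentially delicate point is that the intermediate maps $\psi_s$ are not assumed to be diffeomorphisms, so there is no globally defined time-dependent vector field on $M$ generating the family $\psi_s$; this rules out a direct appeal to the standard Lie-derivative/Cartan-magic-formula argument for flows. The cylinder approach sidesteps this obstacle entirely, since $\Omega$ and its decomposition live on $[0,1]\times M$ independently of whether $\psi_s$ is invertible, and the pointwise definition of $\iota_{\dot\psi_s}$ (with $\dot\psi_s(m)\in T_{\psi_s(m)}M$) matches the halfway-pullback interpretation used throughout the paper.
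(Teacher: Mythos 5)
Your proof is correct, but it reaches the key identity by a genuinely different route than the paper. Both arguments ultimately rest on the pointwise-in-$s$ formula $\frac{d}{ds}\,\psi_s^*d\mu = d\bigl(\psi_s^* i_{\dot{\psi}_s}d\mu\bigr)$ followed by integration over $[0,1]$; the difference is in how that formula is obtained. The paper proves it by a bare-hands computation: it reduces (by linearity and locality) to $\mu=f\,dg$, expands $\psi_s^*d\mu=d(f\circ\psi_s)\wedge d(g\circ\psi_s)$, and applies the product and chain rules to identify $\frac{d}{ds}\psi_s^*d\mu$ with $d\,\psi_s^*i_{\dot{\psi}_s}d\mu$ term by term. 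You instead pull $d\mu$ back along the evaluation map $\Psi\colon[0,1]\times M\to M$, decompose the resulting closed $2$-form on the cylinder as $ds\wedge\alpha_s+\beta_s$, identify $\beta_s=\psi_s^*d\mu$ and $\alpha_s=\psi_s^*i_{\dot{\psi}_s}d\mu$ (correctly matching the paper's ``halfway pullback'' convention for the insertion $i_{\dot{\psi}_s}$), and read off $\partial_s\beta_s=d_M\alpha_s$ from the $ds$-component of $d\Omega=0$. This is the standard homotopy-operator argument; it is coordinate-free, avoids the reduction to decomposable forms, and makes explicit why no appeal to a generating time-dependent vector field (hence to Cartan's magic formula for flows) is needed when the intermediate $\psi_s$ fail to be diffeomorphisms --- a subtlety the paper's computation also handles, but only implicitly. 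The paper's version is more elementary and self-contained; yours is shorter once the cylinder formalism is set up and generalizes immediately to closed forms of any degree. One cosmetic remark: your parenthetical justification of $d_M\beta_s=0$ is slightly muddled --- it is simply the $ds$-free component of $d\Omega=0$ (or, directly, $d_M\psi_s^*d\mu=\psi_s^*dd\mu=0$) --- but nothing in the argument depends on it, since only the $ds$-component is used.
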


\pr We have 
$$d\int_0^1\psi^*_si_{\dot{\psi_s}}\,d\mu\,ds\,=\,\int_0^1d(\psi^*_s
i_{\dot{\psi_s}}\,d\mu)\,ds.$$
Let us show that 
$$d\psi^*_si_{\dot{\psi_s}}\,d\mu\,=\,\frac{d}{ds}\psi^*_sd\mu.$$
For this, we may suppose that $\mu=fdg$ and therefore that 
$d\mu=df\wedge dg$. With this expression, we obtain
$$\psi^*_sd\mu\,=\,d(f\circ\psi_s)\wedge d(g\circ\psi_s),$$
and then 
\begin{eqnarray*}
\frac{d}{ds}\psi^*_sd\mu&=&\frac{d}{ds}\left(d(f\circ\psi_s)\wedge 
d(g\circ\psi_s)\right) \\
&=& d(T_{\psi_s}f(\dot{\psi_s}))\wedge d(g\circ\psi_s) +
d(f\circ\psi_s)\wedge d(T_{\psi_s}g(\dot{\psi_s})) \\
&=& d\psi_s^*\left( (df(\dot{\psi}_s))dg + (dg(\dot{\psi}_s))df\right) \\
&=& d\psi_s^*i_{\dot{\psi_s}}\,d\mu.
\end{eqnarray*}
This implies then 
\begin{eqnarray*}
d\int_0^1\psi^*_si_{\dot{\psi_s}}\,d\mu\,ds&=&\int_0^1
\frac{d}{ds}\psi^*_sd\mu\,ds.\\
&=& \psi_1^*d\mu -  \psi_0^*d\mu.
\end{eqnarray*}
\fin

This lemma permits us to find an easy expression for the de Rahm
differential of the canonical $1$-form $\beta_{\Sigma\rhd\Tau}$ of $\Sigma\rhd\Tau$
described in Lemma \ref{1_form_rack_product} in terms of the $1$-forms
$\beta_{\Sigma}$ of $\Sigma$ and $\beta_{\Tau}$ of $\Tau$:

\begin{cor}  \label{expression_dB}
The $2$-form $d\beta_{\Sigma\rhd\Tau}$ corresponding to the rack product of
$\Sigma$ and $\Tau$ has the following expression:   
$$d\beta_{\Sigma\rhd\Tau}=\phi_1^*d\beta_{\Tau}-\phi_1^*\psi_1^*(\phi_1^{-1})^*
d\beta_{\Sigma}+d\beta_{\Sigma}.$$
\end{cor}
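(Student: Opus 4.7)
The plan is to take the formula for $\beta_{\Sigma\rhd\Tau}$ obtained in Lemma \ref{1_form_rack_product}, apply the de Rham differential, and then invoke the homotopy Lemma \ref{homotopy_lemma} to evaluate the resulting integral.

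Concretely, I would start from
$$\beta_{\Sigma\rhd\Tau}=\phi^*_1\beta_{\Tau}-\phi^*_1\int_0^1\psi_s^*i_{\dot{\psi}_s}(\phi^{-1}_1)^*d\beta_{\Sigma}\,ds$$
and apply $d$, using that $d$ commutes with pullbacks and with the integration over the parameter $s$. This yields
$$d\beta_{\Sigma\rhd\Tau}=\phi^*_1 d\beta_{\Tau}-\phi^*_1\, d\!\int_0^1\psi_s^*i_{\dot{\psi}_s}(\phi^{-1}_1)^*d\beta_{\Sigma}\,ds.$$

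The key step is to recognize that the inner integrand has exactly the form required by Lemma \ref{homotopy_lemma}: setting $\mu:=(\phi^{-1}_1)^*\beta_{\Sigma}\in\Omega^1(M)$, one has $d\mu=(\phi^{-1}_1)^*d\beta_{\Sigma}$ (since $d$ and pullback commute), and the path $\psi_s$ has $\psi_0=\id_M$ and $\psi_1$ a diffeomorphism, so the hypotheses of the lemma are satisfied. Applying it gives
$$d\int_0^1\psi_s^*i_{\dot{\psi}_s}(\phi^{-1}_1)^*d\beta_{\Sigma}\,ds = \psi_1^*(\phi^{-1}_1)^*d\beta_{\Sigma}-(\phi^{-1}_1)^*d\beta_{\Sigma}.$$

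Substituting back and using $\phi_1^*(\phi_1^{-1})^*=\id$ on forms, the second term collapses to $d\beta_{\Sigma}$, yielding the claimed identity. The only mild obstacle is bookkeeping the order of pullbacks and ensuring that Lemma \ref{homotopy_lemma} applies to a closed (in fact exact) $2$-form that is \emph{not} a priori one of the $d\beta$'s; this is resolved by the observation that $(\phi_1^{-1})^*$ of an exact form is exact, so one can take the primitive $\mu=(\phi_1^{-1})^*\beta_\Sigma$ and invoke the lemma directly. No further calculation beyond what Lemma \ref{homotopy_lemma} already provides is needed.
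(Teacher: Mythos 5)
Your proposal is correct and follows exactly the paper's own proof: differentiate the formula of Lemma \ref{1_form_rack_product}, apply Lemma \ref{homotopy_lemma} with $\mu=(\phi_1^{-1})^*\beta_{\Sigma}$ and $\psi_0=\id_M$, and simplify the last term via $\phi_1^*(\phi_1^{-1})^*=\id$. Nothing to add.
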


\begin{proof}
By Lemmas \ref{1_form_rack_product} and \ref{homotopy_lemma}, we have with 
$\mu=(\phi_1^{-1})^*\beta_{\Sigma}$ and $\psi_0=\id_M$
\begin{eqnarray*}
d\beta_{\Sigma\rhd\Tau}&=&\phi^*_1d\beta_{\Tau}-\phi^*_1d\int_0^1\psi_s^*i_{\dot{\psi}_s}(\phi^{-1}_1)^*d\beta_{\Sigma}\,ds.\\
&=&\phi^*_1d\beta_{\Tau}-\phi^*_1\big(\psi^*_1((\phi_1^{-1})^*d\beta_{\Sigma})-\psi_0^*((\phi_1^{-1})^*d\beta_{\Sigma})\big) \\
&=&\phi_1^*d\beta_{\Tau}-\phi_1^*\psi_1^*(\phi_1^{-1})^*d\beta_{\Sigma}+d\beta_{\Sigma}.
\end{eqnarray*}
\end{proof}

\begin{rem}  \label{remark_conjugation_in_semidirect_product}
Observe that the expression for the $2$-form $d\beta_{\Sigma\rhd\Tau}$ in Corollary
\ref{expression_dB} is the transformation of a closed $2$-form under conjugation
in the semidirect product $\Omega^2_{\rm cl}(M)\rtimes{\rm Diff}(M)$, cf
Remark \ref{remark_automorphisms_standard_Courant_algebroid}. 
\end{rem}

The following theorem is the first main results of this article. 

\begin{theo}  \label{main_theorem}
Formula (\ref{rackoid_structure}) defines a rack action of ${\rm Bis}(Y)$
on $Y$, i.e. for all bisections $\Sigma$ and $\Tau$ in 
${\rm Bis}(Y)$ and all $a\in Y$, we have
$$\Sigma\rhd(\Tau\rhd a)\,=\,(\Sigma\rhd\Tau)\rhd(\Sigma\rhd a).$$
\end{theo}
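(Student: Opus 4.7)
The plan is to split the verification into the base-path component and the cotangent component, and to treat the two with very different tools.

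For the base-path component, the first factor of $\Sigma\rhd a$ depends only on $\phi_1$ and $\gamma$ via $\phi_1^{-1}(\gamma)$, and the first factor of $\Sigma\rhd\Tau$ is $\phi_1^{-1}\circ\psi_t\circ\phi_1$. These are exactly the rack action and rack product of the path rackoid $X$ from Section \ref{section_path_rackoid}, for which the self-distributivity calculation (performed in that section) already gives equality of the first components of $\Sigma\rhd(\Tau\rhd a)$ and $(\Sigma\rhd\Tau)\rhd(\Sigma\rhd a)$. So nothing new is needed here.

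For the cotangent component, my plan is to unfold both sides of the equation using Formula (\ref{rackoid_structure}) and then compare. Writing $a=(\gamma,\theta)$, the left-hand side becomes, after one application of the formula to $\Tau\rhd a$ and then to $\Sigma\rhd(\Tau\rhd a)$,
$$\phi_1^*\psi_1^*\theta \;-\; \phi_1^*\psi_1^*i_{\dot\gamma_t}(\psi_1^{-1})^*d\beta_{\Tau} \;-\; \phi_1^*i_{T\psi_1^{-1}(\dot\gamma_t)}(\phi_1^{-1})^*d\beta_{\Sigma},$$
the key subtlety being that the velocity of the intermediate base path $\psi_1^{-1}\gamma_t$ is $T\psi_1^{-1}(\dot\gamma_t)$. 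For the right-hand side, writing $\chi_1=\phi_1^{-1}\psi_1\phi_1$ for the endpoint of $\Sigma\rhd\Tau$ and using $\chi_1^*\phi_1^*=\phi_1^*\psi_1^*$, one obtains
$$\phi_1^*\psi_1^*\theta \;-\; \phi_1^*\psi_1^*i_{\dot\gamma_t}(\phi_1^{-1})^*d\beta_{\Sigma} \;-\; \chi_1^*i_{T\phi_1^{-1}(\dot\gamma_t)}(\chi_1^{-1})^*d\beta_{\Sigma\rhd\Tau}.$$
I would then substitute the explicit formula for $d\beta_{\Sigma\rhd\Tau}$ from Corollary \ref{expression_dB} into the last term and simplify using the standard naturality identity $i_X f^*\omega = f^*i_{Tf(X)}\omega$ for a diffeomorphism $f$. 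Equality of the two expressions should drop out once all pullbacks are aligned.

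The main obstacle, as always with formulas of this shape, is bookkeeping: keeping track of which tangent vectors live over which points (e.g.\ $T\psi_1^{-1}(\dot\gamma_t)$ versus $T\phi_1^{-1}(\dot\gamma_t)$) and which pullbacks act on which $2$-form. A conceptual shortcut is available through Remarks \ref{remark_automorphisms_standard_Courant_algebroid} and \ref{remark_conjugation_in_semidirect_product}: the cotangent part of the action is precisely the action of the automorphism $(\phi_1,d\beta_{\Sigma})$ of the standard Courant algebroid, and Corollary \ref{expression_dB} identifies $(\phi_1,d\beta_{\Sigma})(\psi_1,d\beta_{\Tau})(\phi_1,d\beta_{\Sigma})^{-1}$ inside the semidirect product $\Omega^2_{\rm cl}(M)\rtimes\mathrm{Diff}(M)$ as the pair naturally associated to $\Sigma\rhd\Tau$. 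Since self-distributivity of conjugation in any group is automatic, this reduces the theorem to verifying that the rack product on bisections is indeed the conjugation product in this group, transported along $\varphi:Y\to\mathcal C^\infty([0,1],\T M)$; this final check is exactly the calculation sketched above, but organized in a more structural way.
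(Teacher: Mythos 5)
Your proposal is correct and follows essentially the same route as the paper: self-distributivity of the base-path component is delegated to the computation in Section \ref{section_path_rackoid}, and the cotangent component is verified by unfolding both sides, substituting the expression for $d\beta_{\Sigma\rhd\Tau}$ from Corollary \ref{expression_dB}, and simplifying with the naturality identity $i_Xf^*\omega=f^*i_{f_*X}\omega$ together with the chain-rule identity for the velocity of the transported path. The conceptual reformulation via conjugation in $\Omega^2_{\rm cl}(M)\rtimes{\rm Diff}(M)$ that you mention as a shortcut is precisely the augmented-rack interpretation the paper records in the remark following the theorem.
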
 

\pr Let $\Sigma=(\phi,\eta)$ and $\Tau=(\psi,\zeta)$ be bisections in 
${\rm Bis}(Y)$ and let $a=(\gamma,\alpha)\in Y$. Recall that $\phi=\phi_s$ 
and $\psi=\psi_s$ are paths of maps in ${\mathcal C}^{\infty}(M,M)$ with 
$\phi_0=\psi_0=\id_M$ and $\phi_1$, $\psi_1$ diffeomorphisms of $M$. 
Let us display mainly only the second components of the elements in 
${\rm Bis}(Y)$ or $Y$. The selfdistributivity in the first component has 
been checked in Section \ref{section_path_rackoid}. 

Recall also that the expression $\phi^*_s\eta_s$ is a 1-form and the notations
$$\beta_{\Sigma}\,=\,\int_0^1\phi^*_s\eta_s\,ds,\,\,\,{\rm and}\,\,\,\beta_{\Tau}\,=\,
\int_0^1\psi^*_s\zeta_s\,ds.$$

With these notations, we have:
$$\Tau\rhd a\,=\,(\psi_1^{-1}(\gamma),
\psi^*_1(\alpha-i_{\dot{\gamma_t}}(\psi_1^{-1})^*d\beta_{\Tau})),$$
$$\Sigma\rhd a\,=\,(\phi_1^{-1}(\gamma),\phi^*_1(\alpha-
i_{\dot{\gamma_t}}(\phi_1^{-1})^*d\beta_{\Sigma})),$$
and
$$\Sigma\rhd\Tau\,=\,(\phi^{-1}_1\circ\psi_t\circ\phi_1,
\phi^*_1(\zeta_t-i_{\dot{\psi_t}}(\phi_1^{-1})^*d\beta_{\Sigma})).$$
 
Putting these together, we obtain for the left 
hand side of the assertion of the theorem:
\begin{eqnarray*}
\Sigma\rhd(\Tau\rhd a)&=&(\phi_1^{-1}\psi_1^{-1}(\gamma_t),
\phi^*_1\big(\psi_1^*(\alpha-i_{\dot{\gamma_t}}
(\psi_1^{-1})^*d\beta_{\Tau})-i_{\dot{\psi_1^{-1}(\gamma)_t}}(\phi_1^{-1})^*d\beta_{\Sigma}\big)) .
\end{eqnarray*}
In the same way, we obtain (the same first component and for the second
component of) (for) the right hand 
side of the assertion of the theorem, using Corollary \ref{expression_dB}
for an expression of $d\beta_{\Sigma\rhd\Tau}$:
\begin{eqnarray*}
(\Sigma\rhd\Tau)\rhd(\Sigma\rhd a)&=&(\ldots,(\phi_1^{-1}\psi_1\phi_1)^*  
\big(\phi_1^*(\alpha-i_{\dot{\gamma_t}}(\phi_1^{-1})^*d\beta_{\Sigma})  +  \\
&-&i_{(\phi_1^{-1})_*(\dot{\gamma}_t)}
((\phi_1^{-1}\psi_1\phi_1)^{-1})^*d\beta_{\Sigma\rhd\Tau}\big)) \\
&=&(\ldots,(\phi_1^{-1}\psi_1\phi_1)^*
\big(\phi_1^*(\alpha-i_{\dot{\gamma_t}}(\phi_1^{-1})^*d\beta_{\Sigma})+\\
&-&i_{(\phi_1^{-1})_*(\dot{\gamma}_t)}
((\phi_1^{-1}\psi_1\phi_1)^{-1})^*(\phi_1^*d\beta_{\Tau}-\phi_1^*\psi_1^*(\phi_1^{-1})^*
d\beta_{\Sigma}+d\beta_{\Sigma})\big)) \\
&=&(\ldots,\phi_1^*\psi_1^*(\alpha-i_{\dot{\gamma_t}}(\phi_1^{-1})^*d\beta_{\Sigma}) + \\
&-&\phi_1^* i_{(\psi_1^{-1})_*(\dot{\gamma}_t)}(d\beta_{\Tau}-\psi_1^*(\phi_1^{-1})^*d\beta_{\Sigma}+
(\phi_1^{-1})^*d\beta_{\Sigma}))
\end{eqnarray*}
Here we have used that for a diffeomorphism $f:N\to M$,
a vector field $X\in{\mathcal X}(N)$ and a 2-form $\omega\in\Omega^2(M)$:
$$i_Xf^*\omega\,=\,f^* i_{f_*X}\omega,$$
and we used that 
$$(\phi_1)_*(\dot{\phi^{-1}_1(\gamma)_t})\,=\,\dot{\gamma}_t$$
as follows easily from the chain rule.   

Observe that the terms involving $\alpha$ and $d\beta_{\Tau}$ coincide in the
expressions of 
$\Sigma\rhd(\Tau\rhd a)$ and $(\Sigma\rhd\Tau)\rhd(\Sigma\rhd a)$.
The remaining terms in 
$(\Sigma\rhd\Tau)\rhd(\Sigma\rhd a)$ are
\begin{eqnarray*}
-\phi_1^*\psi_1^*i_{\dot{\gamma}_t}(\phi_1^{-1})^*d\beta_{\Sigma}+\phi_1^*
i_{(\psi_1^{-1})_*\dot{\gamma}_t}\psi_1^*(\phi_1^{-1})^*d\beta_{\Sigma}-\phi_1^*
i_{(\psi_1^{-1})^*\dot{\gamma}_t}(\phi_1^{-1})^*d\beta_{\Sigma} = \\
-\phi_1^*i_{(\psi_1^{-1})^*\dot{\gamma}_t}(\phi_1^{-1})^*d\beta_{\Sigma},
\end{eqnarray*}
and thus both expressions are equal.
\fin

\begin{rem}
In the light of Remarks  \ref{remark_automorphisms_standard_Courant_algebroid}
and \ref{remark_conjugation_in_semidirect_product}, we can interprete
Theorem \ref{main_theorem} in a more conceptual way.

Indeed, recall the notion of an {\it augmented rack}, see e.g. \cite{LauWag}:
Given a group $G$ which acts on a set $X$ such that there exists an
equivariant map $p:X\to G$ (with respect to the conjugation action on $G$), $X$
inherits the structure of a rack defined by $x\rhd y:=p(x)\cdot y$. 
In our context, the map
$$p:{\rm Bis}(Y)\to\Aut(\T M)=\Omega^2_{\rm cl}(M)\rtimes{\rm Diff}(M),\,\,\,\,
\Sigma=(\phi_s,\eta_s)\mapsto p(\Sigma):=(d\beta_{\Sigma},\phi_1)$$
is in fact an augmented rack for the action of $(\theta,\psi)\in\Aut(\T M)$ on
$\Sigma\in{\rm Bis}(Y)$ given by
$$(\theta,\psi)\cdot\Sigma:=(\psi^{-1}\circ\phi_s\circ\psi,\psi^*(\eta_s
-i_{\dot{\phi}_s}(\psi^{-1})^*\theta)).$$
The computations are very similar to the above proof.

Note that the above discussion implies in particular that $p$ is a morphism
of racks (with values in the conjugation rack underlying $\Aut(\T M)$). 
\end{rem}

\section{The Leibniz algebroid of cotangent paths}

Here we will explore the infinitesimal/tangent
Leibniz algebroid of $Y$, the {\it Leibniz algebroid of cotangent paths}.
The general procedure associating to a Lie rackoid a tangent Leibniz algebroid
can be found in Section 5.2 of \cite{LauWag}.

\subsection{Definition of the Leibniz algebroid}

The Lie rackoid $Y=\{\gamma:[0,1]\to T^*M\}$ has the underlying precategory
the set of cotangent paths with the source $s$ and target $t$ maps to $M$
induced by the starting- and endpoint projections. Its tangent Leibniz rackoid
is by definition the kernel of $Ts$, i.e.
$$A:=\Gamma_*([0,1],TM\oplus T^*M).$$
$A$ has as its underlying vector bundle the bundle of (pointed) paths over
$I=[0,1]$ in the extended tangent bundle $\T M=TM\oplus T^*M$. 
It is an infinite-dimensional vector bundle over $M$. The fiber at $m\in M$
is $\Gamma_*([0,1],T_mM\oplus T^*_mM)$, the space of (pointed) paths in the
vector space $T_mM\oplus T^*_mM$. 
The anchor in the Leibniz algebroid $A$ is by definition the restriction of
$Tt$, which is just the evaluation map $(X_t,\alpha_t)\mapsto X_1$ (up to a
sign). 

Let us compute the infinitesimal bracket with respect to the above Lie rackoid 
product given in Formula (\ref{rackoid_structure}). 
It is a bracket on the space of sections of $A$, namely on
$$\Gamma(A)=:{\mathfrak Y}=\Gamma_*([0,1],\Vect(M)\oplus \Omega^1(M)).$$
Thus elements of ${\mathfrak Y}$ are pairs
$(X_t,\alpha_t)$ of a $1$-parameter family of vector fields $X_t$ with $X_0=0$
and a 1-parameter family of 1-forms $\alpha_t$. The restriction on $X_t$
comes from the fact that the family of 
diffeomorphisms $\phi_s$ satisfies $\phi_0={\rm id}_M$. 

\begin{lem}  \label{infinitesimal_bracket}
The infinitesimal bracket corresponding to Formula
(\ref{rackoid_structure}) reads
\begin{equation}   \label{Leibniz_bracket}
[(X_t,\alpha_t),(Y_t,\beta_t)]\,=\,([X_1,Y_t],L_{X_1}\beta_t-
i_{\dot{Y}_t}d\int_0^1\alpha_sds).
\end{equation}
\end{lem}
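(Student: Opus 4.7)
The plan is to apply the general procedure from Section~5.2 of \cite{LauWag} that differentiates the rackoid product at the identity bisection. Represent the sections $(X_t,\alpha_t)$ and $(Y_t,\beta_t)$ by smooth $1$-parameter families $\Sigma_\epsilon = (\phi^\epsilon,\eta^\epsilon)$ and $\Tau_\delta = (\psi^\delta,\zeta^\delta)$ of bisections, both based at the identity bisection $(\id,0)\in{\rm Bis}(Y)$, with the prescribed first-order data. The Leibniz bracket is then computed as
$$[(X_t,\alpha_t),(Y_t,\beta_t)] \;=\; \left.\frac{\partial^2}{\partial\epsilon\,\partial\delta}\right|_{\epsilon=\delta=0}\Sigma_\epsilon \rhd \Tau_\delta,$$
where the right-hand side is expanded using the bisection action formula (\ref{action_bisection_on_bisection}) of Lemma~\ref{lemma_formula_action_bisection}.

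The computation hinges on several vanishings at the basepoint. Since $\eta^0\equiv 0$ one has $\beta_{\Sigma_0}=0$ and $\partial_\epsilon|_0\beta_{\Sigma_\epsilon}=\int_0^1\alpha_s\,ds$; in particular $d\beta_{\Sigma_0}=0$. Since $\psi^0_t\equiv\id_M$ and $\zeta^0_t\equiv 0$, the $\delta$-derivatives at $\delta=0$ of $\psi^\delta_t$, $\dot\psi^\delta_t$, $\zeta^\delta_t$ reduce to $Y_t$, $\dot Y_t$, $\beta_t$ without pullback corrections. For the first slot $(\phi^\epsilon_1)^{-1}\circ\psi^\delta_t\circ\phi^\epsilon_1$, differentiating in $\delta$ first produces the pullback vector field $(\phi^\epsilon_1)^* Y_t$, and subsequent $\epsilon$-differentiation yields the Lie derivative $L_{X_1}Y_t = [X_1,Y_t]$, matching the first slot of the claimed bracket. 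For the second slot $(\phi^\epsilon_1)^*(\zeta^\delta_t - i_{\dot\psi^\delta_t}((\phi^\epsilon_1)^{-1})^* d\beta_{\Sigma_\epsilon})$, the $\delta$-derivative at zero gives $(\phi^\epsilon_1)^*(\beta_t - i_{\dot Y_t}((\phi^\epsilon_1)^{-1})^* d\beta_{\Sigma_\epsilon})$; the $\epsilon$-derivative of the first summand is $L_{X_1}\beta_t$, while the second summand simplifies drastically because differentiating either of the two outer pullbacks produces a term multiplied by $d\beta_{\Sigma_0}=0$ and hence vanishes, leaving only $\partial_\epsilon|_0 d\beta_{\Sigma_\epsilon} = d\int_0^1\alpha_s\,ds$ inside the interior contraction. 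Collecting terms yields exactly Formula~(\ref{Leibniz_bracket}).

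The main obstacle is pure bookkeeping: one must carefully track which terms vanish at $\epsilon=0$ (because $\beta_{\Sigma_0}=0$) and which vanish at $\delta=0$ (because $\psi^0_t=\id_M$ and $\zeta^0_t=0$), so that the apparent three-pullback tangle in the action formula collapses to the clean pair of Lie-derivative expressions above. A secondary subtlety is the halfway-pullback convention relating $\eta^\epsilon$ and $\zeta^\delta$ to honest $1$-forms on $M$; fortunately both families vanish at the basepoint, so the first-order identifications $\partial_\epsilon|_0\eta^\epsilon\leftrightarrow\alpha_t$ and $\partial_\delta|_0\zeta^\delta\leftrightarrow\beta_t$ require no diffeomorphism corrections and can be used transparently in the differentiation above.
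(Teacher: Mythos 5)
Your proposal is correct and follows essentially the same route as the paper: both differentiate the bisection action formula (\ref{action_bisection_on_bisection}) twice along one-parameter families of bisections through the identity, obtain $[X_1,Y_t]$ in the first slot by the definition of the Lie bracket, and observe that in the second slot the product rule kills every term except $L_{X_1}\beta_t$ and the one where the derivative lands on $d\beta_{\Sigma_\epsilon}$, since $d\beta_{\Sigma_0}=0$ and $\phi_1^0=\id_M$. Your write-up is in fact somewhat more explicit than the paper's about which vanishings are used and about the halfway-pullback convention, but the underlying argument is the same.
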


\begin{proof}
We apply the following procedure. Replace $\phi$ and $\psi$ in Formula
(\ref{action_bisection_on_bisection}) by paths $\phi^s$ and $\psi^s$ depending
smoothly on a new parameter $s\in[-1,1]$ such that at $s=0$, both are the
constant path in $\id_M$, and $\frac{\partial}{\partial s}|_{s=0}\phi^s=X$ and
$\frac{\partial}{\partial s}|_{s=0}\psi^s=Y$ are the corresponding paths of
vector fields.
The paths of $1$-forms do not undergo much change.
By definition of the Lie bracket of vector fields,
$$\frac{\partial}{\partial s}|_{s=0}\frac{\partial}{\partial u}|_{u=0}
(\phi_1^u)^{-1}\circ\psi_t^s\circ\phi_1^u=[X_1,Y_t].$$
In the same vein, the first summand is by definition the Lie derivative with
respect to $X_1$. In the second summand of the second component, observe that
by the product rule only the term with
$i_{\dot{Y}_t}$ survives, and in this term, $\phi_1^0=\id_M$.  
\end{proof}

\begin{lem}
The bracket in Equation (\ref{Leibniz_bracket}) is a Leibniz bracket.
\end{lem}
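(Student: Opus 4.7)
The most economical approach will be to invoke the general theorem from the authors' companion paper \cite{LauWag}: for any Lie rackoid, the tangent operation constructed in Section~5.2 of \cite{LauWag} automatically produces a Leibniz bracket (Theorem~5.3 of \cite{LauWag}). Since formula (\ref{Leibniz_bracket}) was obtained in Lemma \ref{infinitesimal_bracket} by precisely this procedure applied to the rack action (\ref{rackoid_structure}), and since Theorem \ref{main_theorem} has just established the self-distributivity of that action, I would simply cite this general result and conclude the Leibniz identity
$$[a,[b,c]] = [[a,b],c] + [b,[a,c]]$$
by differentiating self-distributivity twice, as in Section~5.2 of \cite{LauWag}.

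To give a self-contained verification, I would write $a=(X_t,\alpha_t)$, $b=(Y_t,\beta_t)$, $c=(Z_t,\gamma_t)$, set $A:=\int_0^1\alpha_s\,ds$ and $B:=\int_0^1\beta_s\,ds$, and expand both sides of the identity component by component. The first components on both sides reduce to the Jacobi identity $[X_1,[Y_1,Z_t]]=[[X_1,Y_1],Z_t]+[Y_1,[X_1,Z_t]]$ for vector fields on $M$. In the second component, the terms containing only $\gamma_t$ match via $L_{X_1}L_{Y_1}-L_{Y_1}L_{X_1}=L_{[X_1,Y_1]}$, and the terms involving $\beta$ alone or $\alpha$ alone close by the commutations $L_{X_1}d=dL_{X_1}$ and $L_{X_1}\int_0^1\beta_s\,ds=\int_0^1 L_{X_1}\beta_s\,ds$.

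The hard part will be the mixed cross terms coupling the cocycle pieces $dA$ and $dB$ with the derivatives $\dot{Y}_t$, $\dot{Z}_t$. To handle these I will need the infinitesimal analogue of Lemma \ref{homotopy_lemma}, namely
$$d\int_0^1 i_{\dot{Y}_s}d\mu\,ds \,=\, L_{Y_1}d\mu$$
for any 1-form $\mu$ and any path of vector fields $Y_s$ with $Y_0=0$, which is obtained from Lemma \ref{homotopy_lemma} by taking a family of diffeomorphisms whose $s$-derivative at the identity is $Y_s$. This identity will convert the term $i_{\dot{Z}_t}d\int_0^1 i_{\dot{Y}_s}dA\,ds$ arising in $[[a,b],c]$ into $i_{\dot{Z}_t}L_{Y_1}dA$, which will then cancel with the corresponding cross terms in $[a,[b,c]]$ and $[b,[a,c]]$ via the commutator identity $[L_X,i_Z]=i_{[X,Z]}$ together with $\frac{d}{dt}[X_1,Z_t]=[X_1,\dot{Z}_t]$. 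This cancellation is the infinitesimal shadow of the conjugation formula for $d\beta_{\Sigma\rhd\Tau}$ in Corollary \ref{expression_dB}, which is exactly what makes the citation of the general Lie rackoid principle work.
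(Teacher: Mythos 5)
Your proposal is correct and takes essentially the same route as the paper: the authors likewise first remark that the Leibniz property follows from the general tangent construction of \cite{LauWag} (given Theorem \ref{main_theorem}) and then give a direct component-wise verification, sorting terms by $\alpha$, $\beta$, $\gamma$ and using $L_{[X,Y]}=[L_X,L_Y]$, $[L_U,i_V]=i_{[U,V]}$ and $dL_X=L_Xd$. Your key identity $d\int_0^1 i_{\dot{Y}_s}\,d\mu\,ds=L_{Y_1}d\mu$ is exactly the paper's final reduction $\int_0^1 i_{\dot{Y}_s}\,d\omega\,ds=i_{Y_1}d\omega$ (from $\int_0^1\dot{Y}_s\,ds=Y_1-Y_0$ with $Y_0=0$) combined with Cartan's formula, so no genuine difference remains.
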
 

\pr In view of Lemma \ref{infinitesimal_bracket}, the claim does not need a
proof. However, we will give a direct proof for the sake of transparency.  

The (left) Leibniz identity is clear for the first component. 
In the second 
component, we have for $[(X_t,\alpha_t),[(Y_t,\beta_t),(Z_t,\gamma_t)]]$, 
$$L_{X_1}\big(L_{Y_1}\gamma_t-i_{\dot{Z}_t}d\int_0^1\beta_sds\big)-i_{[Y_1,\dot{Z}_t]}
d\int_0^1\alpha_sds,$$
for $[[(X_t,\alpha_t),(Y_t,\beta_t)],(Z_t,\gamma_t)]$,
$$L_{[X_1,Y_1]}\gamma_t-i_{\dot{Z}_t}d\int_0^1(L_{X_1}\beta_s+i_{\dot{Y}_s}d
\int_0^1\alpha_udu)ds,$$
and for $[(Y_t,\beta_t),[(X_t,\alpha_t),(Z_t,\gamma_t)]$,
$$L_{Y_1}\big(L_{X_1}\gamma_t-i_{\dot{Z}_t}d\int_0^1\alpha_sds\big)-
i_{[X_1,\dot{Z}_t]}d\int_0^1\beta_sds.$$
We recollect the terms corresponding to $\alpha$, $\beta$ and $\gamma$ which 
must cancel separately. 

The fact that the Lie derivative is a representation takes care of the first 
terms in these three expressions:
$$L_{X_1}L_{Y_1}\gamma_t\,=\,L_{[X_1,Y_1]}\gamma_t+L_{Y_1}L_{X_1}\gamma_t.$$
The compatibility of the Lie derivative and the insertion operator 
(i.e. $[L_U,i_V]=i_{[U,V]}$) and the identity $dL_X=L_Xd$ show that
$$L_{X_1}i_{\dot{Z}_t}d\int_0^1\beta_sds\,=\,i_{\dot{Z}_t}d\int_0^1L_{X_1}\beta_sds+
i_{[X_1,\dot{Z}_t]}d\int_0^1\beta_sds.$$
Therefore, it remains to show
$$i_{[Y_1,\dot{Z}_t]}d\int_0^1\alpha_sds\,=\,i_{\dot{Z}_t}d\int_0^1i_{\dot{Y}_s}d\big(
\int_0^1\alpha_udu\big) ds +L_{Y_1}i_{\dot{Z}_t}d\int_0^1\alpha_sds.$$
This last equality boils down (using again $[L_U,i_V]=i_{[U,V]}$ and 
$L_Ud=di_Ud$) to
$$\int_0^1i_{\dot{Y}_s}d\big(\int_0^1\alpha_udu\big)ds\,=\,i_{Y_1}
d\int_0^1\alpha_udu,$$
which follows from $\int_0^1\dot{Y}_sds=Y_1-Y_0$ with the restriction $Y_0=0$.   
\fin

\subsection{The exponential map}  \label{section_exponential_map}

The link of the infinitesimal picture with the global picture can be performed
via the exponential map. The following lemma records formulae for the iterates
of the Leibniz bracket:

\begin{lem}
We have the following formulae for the iteration of the bracket introduced
in Lemma (\ref{infinitesimal_bracket}):
\begin{enumerate}
\item[(a)] $$\ad^{n+1}_{(X,\alpha)}(Y,\beta)=\left(\ad^{n+1}_X(Y),L_x^{n+1}\beta-\sum_{k=0}^n\left(
\begin{array}{c} n+1 \\ k \end{array}\right) i_{L^k_XY}dL_X^{n-k}\alpha\right),$$
\item[(b)] \begin{eqnarray*}
e^{\ad_{(X,\alpha)}}(Y,\beta)&=&\left(e^{\ad_X}(Y),e^{L_X}\beta-\sum_{n=-1}^{\infty}\sum_{k=0}^ni_{\frac{L_X}{k!}Y}d\frac{L^{n-k}_X}{(n-k+1)!}\alpha\right),   \\
&=&\left(e^{\ad_X}(Y),e^{L_X}\beta-i_{e^{L_X}Y}d\left(\frac{e^{L_X}-1}{L_X}\right)\alpha\right).
\end{eqnarray*}
\end{enumerate}
\end{lem}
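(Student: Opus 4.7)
The plan is to prove (a) by induction on $n$ and then obtain (b) as a formal consequence by summing the series. Throughout, I adopt the (abusive) notational convention apparent in the statement: $X$ denotes the endpoint $X_1$, $\alpha$ denotes the integral $\int_0^1\alpha_s\,ds$, and in the insertion subscript $Y$ stands for $\dot Y_t$, while the outer $\ad^{n+1}_X$ acts pointwise on the path $Y_t$.

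For the base case $n=0$, formula (a) reduces to $([X_1,Y_t],\,L_{X_1}\beta_t-i_{\dot Y_t}d\bar\alpha)$, which is exactly the bracket of Lemma \ref{infinitesimal_bracket}. For the induction step, I set $(Z_t,\gamma_t):=\ad^{n+1}_{(X,\alpha)}(Y,\beta)$ and compute $\ad_{(X,\alpha)}(Z_t,\gamma_t)=([X_1,Z_t],L_{X_1}\gamma_t-i_{\dot Z_t}d\bar\alpha)$. The first component is immediate since $\ad_{X_1}$ is linear. For the second, I use two key identities: $[L_{X_1},i_V]=i_{[X_1,V]}=i_{L_{X_1}V}$ and $[L_{X_1},d]=0$. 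Applying $L_{X_1}$ to each term $\binom{n+1}{k}\,i_{L^k_{X_1}\dot Y_t}\,dL^{n-k}_{X_1}\bar\alpha$ therefore produces two contributions, one with $L^{n-k+1}_{X_1}\bar\alpha$ and one with $L^{k+1}_{X_1}\dot Y_t$; moreover $\dot Z_t=L^{n+1}_{X_1}\dot Y_t$ contributes an extra term $i_{L^{n+1}_{X_1}\dot Y_t}d\bar\alpha$. Re-indexing so that every term has the shape $i_{L^k_{X_1}\dot Y_t}\,dL^{n+1-k}_{X_1}\bar\alpha$, the coefficients combine, and Pascal's rule $\binom{n+1}{k}+\binom{n+1}{k-1}=\binom{n+2}{k}$ (with the boundary values $\binom{n+2}{0}=1$ and $\binom{n+2}{n+1}=n+2$ produced from the isolated terms) delivers exactly the $(n+1)$-st formula.

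For (b), I expand $e^{\ad_{(X,\alpha)}}=\sum_{n\geq 0}\frac{1}{n!}\ad^n_{(X,\alpha)}$ and substitute (a). The first component yields $\sum_n \frac{1}{n!}\ad^n_{X_1}Y_t=e^{\ad_{X_1}}Y_t$, and the $\beta$-part of the second component sums trivially to $e^{L_{X_1}}\beta_t$. For the $\alpha$-part, the coefficient $\frac{1}{(n+1)!}\binom{n+1}{k}$ simplifies to $\frac{1}{k!(n+1-k)!}$; setting $m:=n-k$ turns the constrained double sum over $0\leq k\leq n<\infty$ into an unconstrained sum over $k,m\geq 0$ with coefficient $\frac{1}{k!(m+1)!}$. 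This factors as
\[
\Bigl(\sum_{k\geq 0}\frac{L^k_{X_1}}{k!}\dot Y_t\Bigr)\text{ inserted into }d\Bigl(\sum_{m\geq 0}\frac{L^m_{X_1}}{(m+1)!}\bar\alpha\Bigr)=i_{e^{L_{X_1}}\dot Y_t}\,d\Bigl(\tfrac{e^{L_{X_1}}-1}{L_{X_1}}\Bigr)\bar\alpha,
\]
which is the closed form in (b); expanding the second series back gives the first line of (b).

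The only real obstacle is the bookkeeping in the inductive step: one must recognise that precisely the correct two-term combinations arise to trigger Pascal's rule, and must correctly handle the boundary indices $k=0$ (contributed only by the $L_{X_1}$-derivative acting on $\beta$-type terms via $L_{X_1}dL^n_{X_1}\bar\alpha$) and $k=n+1$ (contributed by the extra $i_{\dot Z_t}d\bar\alpha$ term). Once the combinatorics is clean, the passage to (b) is a routine Fubini rearrangement of absolutely convergent formal series.
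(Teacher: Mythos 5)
Your proof is correct. The paper states this lemma without any proof, so there is nothing to compare against; your induction on $n$ (using $[L_{X_1},i_V]=i_{[X_1,V]}$, $L_{X_1}d=dL_{X_1}$, $\dot Z_t=L_{X_1}^{n+1}\dot Y_t$, and Pascal's rule) followed by the rearrangement $\frac{1}{(n+1)!}\binom{n+1}{k}=\frac{1}{k!\,(n+1-k)!}$ to factor the double series is exactly the argument the authors evidently intend, and you correctly identify the notational conventions (that $X$, $\alpha$ stand for $X_1$, $\int_0^1\alpha_s\,ds$, and that $Y$ in the insertion subscript stands for $\dot Y_t$) needed to make the stated formulae literally true.
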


\begin{rem} \label{exponential_as_a_flow}
Recall the formula
$$(\Phi^{X}_t)^*(f)=\sum_{n\geq 0}\frac{t^nX^n(f)}{n!}$$
relating the flow $\Phi^{X}_t$ of a real analytic vector field $X$ and the
exponential
of its action on a real analytic function $f$ which is also valid for the
action on
other real analytic geometric objects on $M$. This formula follows from
showing that the
right hand side satisfies the same initial value problem
$$(\Phi^{X}_t)'=X\cdot\Phi^{X}_t,\,\,\,\Phi^{X}_0=\id_M$$
as the flow of $X$. For this, one inverts sum and derivative thanks to
normal convergence, which is true on a compact manifold $M$.

This formula shows that the first component of
$e^{\ad_{(X,\alpha)}}(Y,\beta)$ is in fact $(\Phi^{X}_t)^*Y$, as expected.
\end{rem}

\subsection{The quotient standard Courant algebroid}

In this section, we pass to a quotient of the Leibniz algebroid
$A=\Gamma_*([0,1],TM\oplus T^*M)$  with respect to a subalgebroid of ideals
such that the quotient is isomorphic as Leibniz algebroids to the standard
Courant algebroid. 

Define the following subalgebroid in $A$:
$$I:=\Big\{(X_t,\alpha_t)\,|\,X_1=0\,\,\,\,{\rm and}\,\,\,\,\int_0^1\alpha_s
\,ds=0\Big\}.$$
Observe that $I$ is certainly a subbundle of the vector bundle $A$, as all
requirements are ${\mathcal C}^{\infty}(M)$-linear.  That it is a subalgebroid
will follow from the next lemma:

\begin{lem}  \label{I_ideal}
The space of sections ${\mathfrak I}:=\Gamma(I)$ is a (two-sided) Leibniz
ideal contained in the left center of 
${\mathfrak Y}=\Gamma(A)$.
\end{lem}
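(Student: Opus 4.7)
The plan is to verify the two claims directly from the formula for the Leibniz bracket given in Lemma \ref{infinitesimal_bracket}, using essentially only the two defining conditions $X_1=0$ and $\int_0^1\alpha_s\,ds=0$ of $\mathfrak I$ together with the condition $X_0=0$ which is built into $\mathfrak Y$ itself.

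For the left-center property, take $(X_t,\alpha_t)\in\mathfrak I$ and an arbitrary $(Y_t,\beta_t)\in\mathfrak Y$. I would plug into
\[
[(X_t,\alpha_t),(Y_t,\beta_t)]=\bigl([X_1,Y_t],\,L_{X_1}\beta_t-i_{\dot Y_t}d\!\int_0^1\alpha_s\,ds\bigr)
\]
and observe that each ingredient vanishes for entirely separate reasons: $X_1=0$ kills both the first component and the Lie derivative term, while $\int_0^1\alpha_s\,ds=0$ forces the $d$-term to be $d(0)=0$. So the whole bracket is zero, which is already stronger than being merely a left ideal.

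For the right-ideal property, I would compute $[(Y_t,\beta_t),(X_t,\alpha_t)]$ and show it still satisfies the two membership conditions for $\mathfrak I$. The first component is $[Y_1,X_t]$, whose value at $t=1$ is $[Y_1,X_1]=0$ since $X_1=0$. The second component is $L_{Y_1}\alpha_t-i_{\dot X_t}d\!\int_0^1\beta_s\,ds$; integrating over $t\in[0,1]$ and pulling $L_{Y_1}$ and the exterior derivative outside the integrals, the first piece gives $L_{Y_1}\int_0^1\alpha_t\,dt=0$, while the second piece reduces to $i_{X_1-X_0}d\!\int_0^1\beta_s\,ds$ by the fundamental theorem of calculus. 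Here both $X_1=0$ (by $(X_t,\alpha_t)\in\mathfrak I$) and $X_0=0$ (built into the definition of $\mathfrak Y$), so this contribution vanishes as well. Therefore $\int_0^1(\cdots)\,dt=0$ and the bracket lies in $\mathfrak I$.

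There is essentially no obstacle: the argument is a direct substitution, and the two conditions cutting out $\mathfrak I$ are precisely engineered to annihilate the two ingredients ($X_1$ and $d\!\int_0^1\alpha_s\,ds$) that appear in the bracket formula. The only small subtlety worth flagging in the write-up is the use of $X_0=0$ in the right-ideal computation, since it is the condition inherited from $A$ (not from $I$) that allows the boundary term $X_1-X_0$ to collapse fully to zero.
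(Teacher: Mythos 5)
Your proof is correct and follows essentially the same route as the paper's: the left-center claim is the observation that $X_1=0$ and $\int_0^1\alpha_s\,ds=0$ annihilate every term of the bracket, and the right-ideal claim is verified by checking the two membership conditions, with the boundary term $i_{X_1-X_0}d\int_0^1\beta_s\,ds$ collapsing exactly as in the paper (which likewise uses both $X_1=0$ from $I$ and $X_0=0$ from $A$). Your remark flagging the role of $X_0=0$ is a fair point of emphasis but not a departure from the paper's argument.
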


\begin{proof}
It is obvious from the formula for the Leibniz bracket on the space of
sections of $A$ that have $[{\mathfrak I},{\mathfrak Y}]=0$,
thus ${\mathfrak I}$ is contained in the left center of the Leibniz algebra
${\mathfrak Y}$. 

On the other hand, for $(X_t,\alpha_t)\in{\mathfrak Y}$ and 
$(Y_t,\beta_t)\in{\mathfrak I}$, we have
$$[(X_t,\alpha_t),(Y_t,\beta_t)]\,=\,([X_1,Y_t],L_{X_1}\beta_t-i_{\dot{Y}_t}
d\int_0^1\alpha_sds)$$
with $([X_1,Y_t])|_{t=1}=0$ and
$$\int_0^1(L_{X_1}\beta_t-i_{\dot{Y}_t}d\int_0^1\alpha_s\,ds)\,dt\,=\,0.$$
Indeed, the first factor in the integral is zero, because
$\int_0^1\beta_t\,dt=0$, and the second factor in the integral is zero,
because $\omega:=\int_0^1\alpha_s\,ds$ does not depend on $t$, and thus 
$$\int_0^1i_{\dot{Y}_t}d\omega\,dt=i_{Y_1}d\omega-i_{Y_0}d\omega=0.$$
\end{proof}

\begin{rem}
Observe that the squares of the Leibniz algebra ${\mathfrak Y}$ are not
included in 
${\mathfrak I}$. Therefore the quotient is not necessarily a Lie algebra
(as expected). Indeed, it is easily computed that the squares are of the form
$$([X_1,X_t],L_{X_1}\alpha_t-i_{\dot{X}_t} d\,\int_0^1\alpha_s\,ds),$$
and, while $[X_1,X_t]|_{t=1}=0$,  the integral of the form-part gives
\begin{eqnarray*}
\int_0^1(L_{X_1}\alpha_t-i_{\dot{X}_t} d\,\int_0^1\alpha_s\,ds)\,dt&=&
L_{X_1}(\int_0^1\alpha_s\,ds)-i_{X_1}d(\int_0^1\alpha_s\,ds) \\
&=&d\,i_{X_1}\big(\int_0^1\alpha_s\,ds\big).  
\end{eqnarray*}
\end{rem}

\begin{prop}   \label{quotient_proposition}
The map $\varphi:=({\rm ev}_1,\int_0^1(-)dt):A\to\T M$ induces an isomorphism
of Leibniz algebroids 
of $A/I$ with the standard Courant algebroid $\T M$.  
\end{prop}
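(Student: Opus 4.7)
The plan is to show that $\varphi:=(\mathrm{ev}_1,\int_0^1(-)\,dt)\colon A\to\mathbb{T} M$ is a surjective morphism of Leibniz algebroids with kernel exactly $I$, and then to pass to the quotient.

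First I would identify the kernel. By construction $\ker\varphi = \{(X_t,\alpha_t) : X_1=0 \text{ and } \int_0^1\alpha_s\,ds = 0\} = I$, so $\varphi$ factors through an injective bundle map $\bar\varphi\colon A/I\to\mathbb{T} M$. For surjectivity of $\varphi$ (hence of $\bar\varphi$), given any $(X,\alpha)\in \mathbb{T}_m M$ I would exhibit the explicit lift $(X_t,\alpha_t):=(tX,\alpha)$, which lies in $A_m$ (since $X_0=0$) and satisfies $\varphi(X_t,\alpha_t)=(X,\alpha)$. Since both maps are $\mathcal{C}^\infty(M)$-linear on fibers, this shows $\bar\varphi$ is an isomorphism of vector bundles.

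Next I would verify that $\varphi$ intertwines the anchors: the anchor of $A$ sends $(X_t,\alpha_t)$ to $X_1\in\Gamma(TM)$, while the anchor of $\mathbb{T} M$ is projection onto $TM$. Thus $\mathrm{pr}_{TM}\circ\varphi(X_t,\alpha_t)=X_1$, matching exactly.

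The central computation is compatibility with the brackets. Applying $\varphi$ to the Leibniz bracket of Lemma~\ref{infinitesimal_bracket},
\begin{equation*}
\varphi[(X_t,\alpha_t),(Y_t,\beta_t)] = \Bigl([X_1,Y_1],\;\int_0^1\bigl(L_{X_1}\beta_t - i_{\dot Y_t}d\textstyle\int_0^1\alpha_s\,ds\bigr)\,dt\Bigr).
\end{equation*}
Since $L_{X_1}$ commutes with the $t$-integral and since $\int_0^1 \dot Y_t\,dt = Y_1 - Y_0 = Y_1$ (using the pointedness condition $Y_0=0$), the right-hand side equals $\bigl([X_1,Y_1],\,L_{X_1}\int_0^1\beta_t\,dt - i_{Y_1}d\int_0^1\alpha_s\,ds\bigr)$, which is exactly the Dorfman bracket of $\varphi(X_t,\alpha_t)=(X_1,\int_0^1\alpha_s\,ds)$ with $\varphi(Y_t,\beta_t)=(Y_1,\int_0^1\beta_t\,dt)$.

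I expect no real obstacle: the argument is essentially bookkeeping once the pointedness condition $Y_0=0$ is used to collapse $\int_0^1 \dot Y_t\,dt$ to $Y_1$. The only subtle point is checking that $\bar\varphi$ transports the bracket on $\mathfrak{Y}/\mathfrak{I}$ (which makes sense because $\mathfrak{I}$ is a two-sided ideal by Lemma~\ref{I_ideal}) to the Dorfman bracket; but this is immediate from the bracket-compatibility just established together with surjectivity of $\varphi$ on sections, since any section of $\mathbb{T} M$ can be realized as $\varphi$ of a section of $A$ (for instance, via the lift $(tX,\alpha)$ applied pointwise).
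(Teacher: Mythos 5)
Your proposal is correct and follows essentially the same route as the paper: check anchor and bracket compatibility (the key step being $\int_0^1 i_{\dot Y_t}\,d\omega\,dt = i_{Y_1}d\omega$ since $Y_0=0$ and $\omega=\int_0^1\alpha_s\,ds$ is $t$-independent), and identify $\ker\varphi=I$. The only difference is that you verify surjectivity inside the proof via the explicit section $(X,\alpha)\mapsto(tX,\alpha)$, which the paper records in a remark immediately afterwards.
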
 

\begin{proof}
We claim that the map of vector bundles 
$$\varphi=({\rm ev}_1,\int_0^1(-)\,dt):A\to\T M,\,\,\,(X_t,\alpha_t)\mapsto
(X_1,\int_0^1\alpha_t\,dt)$$
is a morphism of Leibniz algebroids. Observe that the first component of
$\varphi$ is the anchor of 
$A$ - this shows the compatibility of $\varphi$ with the anchors. 

The compatibility with the bracket is also immediate. Let $(X_t,\alpha_t)$ and
$(Y_t,\beta_t)$ be elements of ${\mathfrak Y}$ and denote by
$\omega:=\int_0^1\alpha_s\,ds$, 
$\zeta:=\int_0^1\beta_s\,ds$ the integration of the corresponding forms.
\begin{eqnarray*}
\varphi([(X_t,\alpha_t),(Y_t,\beta_t)])&=&([X_1,Y_1],L_{X_1}\zeta-i_{X_1}d\omega) 
\\
&=&[\varphi(X_t,\alpha_t),\varphi(Y_t,\beta_t)].
\end{eqnarray*}
It is clear that the kernel of $\varphi$ is the subalgebroid $I$ from Lemma
\ref{I_ideal}. 
\end{proof}  

\begin{rem}
We have also a section for the map
$\varphi:=({\rm ev}_1,\int_0^1(-)dt):A\to\T M$. It consists simply of the map
$(X,\alpha)\mapsto(X_t:t\mapsto tX,\alpha)$ for $t\in[0,1]$. It is easily
computed that this section is compatible with the Leibniz bracket. 
\end{rem}

\section{Symplectic geometry of the cotangent rackoid}

\subsection{Canonical $1$-form and symplectic form}

In this section, we interpret the form
$d\beta_{\Sigma}=d\int_0^1\phi^*_s\eta_s\,ds$
associated to a bisection 
$\Sigma=(\phi,\eta)$ in terms of a symplectic form. This is not mysterious
as it is clear that the manifold of cotangent paths carries a symplectic
structure inherited from the cotangent bundle.  

Observe that there are the following three geometric objects defined on 
$Y={\mathcal C}^{\infty}([0,1],T^*M)$:

\begin{enumerate}
\item[(a)] the function
$c:(\gamma,\alpha)\mapsto\int_0^1\alpha_{\gamma(t)}(\dot{\gamma}(t))\,dt$,
\item[(b)] the $1$-form $\lambda$ with
$\lambda_{(\gamma,\alpha)}(\delta\gamma_t)=\int_0^1\alpha_{\gamma(t)}
(\delta\gamma_t)\,dt$ and 
\item[(c)] the exact $2$-form $\omega=d\lambda$.
\end{enumerate}

The $2$-form $\omega$ is a symplectic form, thus the
{\it symplectic manifold} $Y$ is exact. Indeed, for any symplectic manifold
$S$, the Fr\'echet manifold ${\mathcal C}^{\infty}([0,1],S)$ is symplectic for
the form obtained by integrating the pointwise form along $[0,1]$, and in case
$S$ is exact, the resulting symplectic manifold ${\mathcal C}^{\infty}([0,1],S)$
is exact as well, cf \cite{CatFel}, \cite{KSS}. 

The above three objects induce similar objects on the manifold of bisections
${\rm Bis}(Y)$, and the $1$-form $\lambda$ on a bisection 
$\Sigma=(\phi,\eta)$ is the $1$-form $\beta_{\Sigma}$ which we introduced before.
Denote by $s_{\Sigma}:\Sigma\to M$ the restriction of the source map
$s:Y\to M$ to the bisection $\Sigma\subset Y$ and similarly for the target
map $t:Y\to M$. View $\Sigma$ as a section of the source map denoted
$s^{-1}_{\Sigma}:M\to Y$.

\begin{lem}
The pullback $(s^{-1}_{\Sigma})^*\lambda$ is the $1$-form
$\beta_{\Sigma}\,=\,\int_0^1\phi^*_s\eta_s\,ds$. Thus $d\beta_{\Sigma}$ becomes
the pullback of the symplectic form $\omega$ on $Y$ and the pullback
$(t^{-1}_{\Sigma})^*\lambda$ is $(\phi^{-1}_1)^*\beta_{\Sigma}$.  
\end{lem}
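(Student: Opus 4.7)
The proof reduces to a direct unwinding of the definition of $\lambda$ along the section $s^{-1}_\Sigma\:M\to Y$, so the plan is essentially computational, but let me indicate the structure.

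First I would describe the tangent map $T s^{-1}_\Sigma$ explicitly. Viewed as a section of the source, $s^{-1}_\Sigma$ sends $m\in M$ to the cotangent path $t\mapsto (\phi_t(m),\eta_{t,m})$, where $\eta_{t,m}\in T^*_{\phi_t(m)}M$. Hence, for $X\in T_mM$, the image $T_m s^{-1}_\Sigma(X)$ is the tangent vector at this cotangent path whose underlying variation of base path is exactly $t\mapsto T_m\phi_t(X)\in T_{\phi_t(m)}M$ (no variation occurs in the cotangent direction since $m$ is the only parameter being varied and $\eta_{t,m}$ is fixed as a section of $\phi_t^*T^*M$ when composed with the identification).

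Next I would evaluate $\lambda$ at this point using its definition from (b). We get
$$\big((s^{-1}_\Sigma)^*\lambda\big)_m(X)=\lambda_{s^{-1}_\Sigma(m)}\big(T_m s^{-1}_\Sigma(X)\big)=\int_0^1 \eta_{t,m}\big(T_m\phi_t(X)\big)\,dt.$$
By the very definition of the ``halfway pullback'' $\phi_t^*\eta_t$ recalled earlier in the paper, the integrand equals $(\phi_t^*\eta_t)_m(X)$, so the right-hand side is $\beta_\Sigma(X)$. This establishes the first equation, and the statement that $d\beta_\Sigma=(s^{-1}_\Sigma)^*\omega$ follows by naturality of the de Rham differential together with $\omega=d\lambda$.

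For the target-section statement, I would observe that the bisection $\Sigma$, viewed as a subset of $Y$, admits two parameterizations by $M$: via $s$ and via $t$. Since for a point on $\Sigma$ with source $m$ the target is $\phi_1(m)$, the two sections are related by reparameterization:
$$t^{-1}_\Sigma = s^{-1}_\Sigma\circ\phi_1^{-1}.$$
Pulling back $\lambda$ and using the result of the previous paragraph yields
$$(t^{-1}_\Sigma)^*\lambda=(\phi_1^{-1})^*(s^{-1}_\Sigma)^*\lambda=(\phi_1^{-1})^*\beta_\Sigma.$$
The only step that requires a little care is the identification of $T s^{-1}_\Sigma(X)$ in the Fréchet setting, but thanks to the exponential law used throughout the paper, this amounts to a pointwise verification for each $t\in[0,1]$, so no genuine analytic difficulty appears and I do not anticipate a serious obstacle.
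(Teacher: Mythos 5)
Your proof is correct and follows essentially the same route as the paper: both unwind $\lambda$ along the section $s^{-1}_{\Sigma}$, identify the integrand $\eta_{t,m}(T_m\phi_t(X))$ with the halfway pullback $(\phi_t^*\eta_t)(X)$, and deduce the remaining claims from $(s^{-1}_{\Sigma})^*\lambda=\beta_{\Sigma}$ (the paper leaves the reparameterization $t^{-1}_{\Sigma}=s^{-1}_{\Sigma}\circ\phi_1^{-1}$ implicit, which you usefully spell out). One minor imprecision: the cotangent component of $T_m s^{-1}_{\Sigma}(X)$ need not vanish as $m$ varies, but this is harmless since $\lambda_{(\gamma,\alpha)}$ only pairs $\alpha$ against the base-path variation $\delta\gamma_t$.
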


\begin{proof}
Recall that we view bisections of $Y$, i. e. elements in 
$${\mathcal C}^{\infty}(M,{\mathcal C}^{\infty}([0,1],T^*M)),$$
rather as elements  of
${\mathcal C}^{\infty}([0,1],{\mathcal C}^{\infty}(M,T^*M))$, i.e. as
paths in ${\mathcal C}^{\infty}(M,T^*M)$. This is expressed in the notation
$(\phi_t,\eta_t)$. In order to perform the pullback, however, we have to view
them as families of elements of $Y$, i.e. for a fixed $m\in M$, we obtain an
element of $Y$ which we denote accordingly
$(\phi_t(m),\eta_t(m))=:(\gamma_t,\alpha_t)$. We have to compute the
expression of $\lambda$ at such a point. 

In fact, it is immediate that the expression
$\lambda_{(\gamma,\alpha)}=\int_0^1\alpha_{\gamma(t)}(\delta\gamma_t)\,dt$ computed
at a point $(\phi_t(m),\eta_t(m))=:(\gamma_t,\alpha_t)$ gives 
$\beta_{\Sigma}\,=\,\int_0^1\phi^*_t\eta_t\,dt$. The only thing to note is that
for all tangent vectors $X\in T_mM$, we have
$$(s_{\Sigma}^{-1})^*(\alpha_{\gamma(t)}(\delta\gamma_t))(X)=
\eta_{t,m}(T_m\phi_t(X)).$$

The other claims of the lemma follow immediately from
$(s^{-1}_{\Sigma})^*\lambda=\beta_{\Sigma}$. 
\end{proof}

\begin{lem}
The three objects $c$, $\beta_{\Sigma}$ and $d\beta_{\Sigma}$
on $Y$ and ${\rm Bis}(Y)$ respectively satisfy the following
equivariance relations with respect to the rackoid product:
\begin{enumerate}
\item[(a)] the function $c$ is invariant:
$$c_{(\gamma,\alpha)}=c_{\Sigma\rhd(\gamma,\alpha)},$$
\item[(b)] $$\beta_{\Sigma\rhd\Tau}=
\phi^*_1\beta_{\Tau}-\phi^*_1\int_0^1\psi_s^*i_{\dot{\psi}_s}(\phi^{-1}_1)^*
d\beta_{\Sigma}\,ds,$$
\item[(c)] \begin{equation}       \label{equivariance_property} 
d\beta_{\Sigma\rhd\Tau}=\phi_1^*d\beta_{\Tau}-\phi_1^*\psi_1^*(\phi_1^{-1})^*
d\beta_{\Sigma}+d\beta_{\Sigma}.
\end{equation}
\end{enumerate}
\end{lem}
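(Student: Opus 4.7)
The plan is to treat the three assertions separately. Parts (b) and (c) are verbatim restatements of Lemma \ref{1_form_rack_product} and Corollary \ref{expression_dB}, both established earlier in this section, so I would simply cite those results. All the genuinely new content lies in (a), the invariance of the function $c$ under the rack action on $Y$.

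For (a), I propose a direct computation. Write $\Sigma=(\phi,\eta)$ and $a=(\gamma,\theta)$. First, I would read off from Formula (\ref{rackoid_structure}) that $\Sigma\rhd a$ has base path $\tilde{\gamma}_t:=\phi_1^{-1}(\gamma_t)$ with velocity $\dot{\tilde{\gamma}}_t=T\phi_1^{-1}(\dot{\gamma}_t)$, and cotangent component $\tilde{\theta}_t:=\phi_1^*\bigl(\theta_t-i_{\dot{\gamma}_t}(\phi_1^{-1})^*d\beta_{\Sigma}\bigr)$ sitting at $\tilde{\gamma}_t$. Next, I would compute the pairing $\tilde{\theta}_t(\dot{\tilde{\gamma}}_t)$. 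Since $\phi_1$ is a diffeomorphism, the defining property $(\phi_1^*\xi)(W)=\xi(T\phi_1\cdot W)$ combined with $T\phi_1\circ T\phi_1^{-1}=\id$ collapses the pullback against the pushforward, yielding
\[
\tilde{\theta}_t(\dot{\tilde{\gamma}}_t)\;=\;\theta_t(\dot{\gamma}_t)\;-\;\bigl((\phi_1^{-1})^*d\beta_{\Sigma}\bigr)(\dot{\gamma}_t,\dot{\gamma}_t).
\]
The second term is the value of a $2$-form on a pair of equal tangent vectors, hence zero by antisymmetry. Integrating in $t$ then gives $c_{\Sigma\rhd a}=c_a$.

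I do not expect a genuine obstacle. The only step demanding care is the bookkeeping of base points: $\tilde{\theta}_t$ sits at $\tilde{\gamma}_t$, whereas $\theta_t$ and $(\phi_1^{-1})^*d\beta_{\Sigma}$ sit at $\gamma_t$, and one must check that the outer $\phi_1^*$ in the definition of $\tilde{\theta}_t$ exactly compensates for the $T\phi_1^{-1}$ occurring in $\dot{\tilde{\gamma}}_t$. Once this is aligned, the entire mechanism of the proof reduces to the vanishing identity $\omega(v,v)=0$ for any $2$-form $\omega$ and any vector $v$, applied to the gauge $2$-form $(\phi_1^{-1})^*d\beta_{\Sigma}$ appearing in the rack action. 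Conceptually, this is the expected phenomenon that the gauge contribution in the semidirect-product action $\Omega^2_{\rm cl}(M)\rtimes\mathrm{Diff}(M)$ of Remark \ref{remark_automorphisms_standard_Courant_algebroid} cannot affect a quantity built from pairing cotangent and tangent components of a path.
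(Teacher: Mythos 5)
Your proposal is correct and matches the paper's proof: parts (b) and (c) are indeed handled by citing Lemma \ref{1_form_rack_product} and Corollary \ref{expression_dB}, and for (a) the paper merely says ``straightforward computation'' --- your explicit computation (the pullback $\phi_1^*$ cancelling against $T\phi_1^{-1}$ in the velocity, leaving only the vanishing term $\bigl((\phi_1^{-1})^*d\beta_{\Sigma}\bigr)(\dot{\gamma}_t,\dot{\gamma}_t)=0$ by antisymmetry) is exactly the intended argument.
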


\begin{proof}
\begin{enumerate}
\item[(a)]  Straight forward computation.
\item[(b)]  This has been shown in Lemma \ref{1_form_rack_product}.
\item[(c)] This has been shown in Lemma  \ref{expression_dB}.
\end{enumerate}
\end{proof}

\begin{defi}
A Lie rackoid with a closed $2$-form on the underlying manifold
(total space of the submersions) is called a {\it symplectic Lie rackoid}
if the form satisfies the equivariance property displayed in Equation
(\ref{equivariance_property}).  
\end{defi}

\begin{defi}
A bisection $\Sigma$ of a Lie rackoid $Y$ is called {\it isotropic} in case
$d\beta_{\Sigma}=0$.
\end{defi}

The condition that a Lie groupoid is symplectic is much more restrictive than
the condition that a 
Lie rackoid is symplectic. The former condition means that for the symplectic
form $\omega$ on 
the symplectic groupoid $\Gamma\to M$, we have 
$$0=pr^*_1\omega -{\rm comp}^*\omega+ pr^*_2\omega,$$
where for $i=1,2$, $pr_i:\Gamma\times_{M}\Gamma\to \Gamma$ are the two
projections and 
${\rm comp}:\Gamma\times_{M}\Gamma\to \Gamma$ is the composition. The fact
that the relation contains three terms implies for example that the identity
bisection $\epsilon:M\to\Gamma$ is isotropic. For a Lie rackoid, we have the
following less restrictive property: 

\begin{lem}
In a symplectic Lie rackoid $Y$, the isotropic bisections form a subrack. 
\end{lem}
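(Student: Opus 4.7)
The plan is to apply the equivariance property of Equation (\ref{equivariance_property}) directly. Recall that a bisection $\Sigma$ is isotropic precisely when $d\beta_\Sigma = 0$, so to show that the isotropic bisections form a subrack, I only need to verify that whenever $\Sigma = (\phi,\eta)$ and $\Tau = (\psi,\zeta)$ both satisfy $d\beta_\Sigma = 0$ and $d\beta_\Tau = 0$, then $d\beta_{\Sigma \rhd \Tau} = 0$.

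First, I would simply substitute $d\beta_\Sigma = 0$ and $d\beta_\Tau = 0$ into the identity
$$d\beta_{\Sigma\rhd\Tau} = \phi_1^* d\beta_\Tau - \phi_1^* \psi_1^* (\phi_1^{-1})^* d\beta_\Sigma + d\beta_\Sigma$$
provided by Corollary \ref{expression_dB}. Each of the three terms on the right-hand side contains either $d\beta_\Sigma$ or $d\beta_\Tau$ as a factor (the pullback operations $\phi_1^*$, $\psi_1^*$, $(\phi_1^{-1})^*$ are linear), so they each vanish individually, yielding $d\beta_{\Sigma\rhd\Tau} = 0$. This exhibits $\Sigma \rhd \Tau$ as isotropic.

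There is no real obstacle here: the whole content is packaged in the equivariance formula already established. Conceptually, the point to emphasize is the contrast with the symplectic groupoid situation noted just before the statement. In a symplectic groupoid the analogous multiplicativity relation contains three terms summing to zero, which forces the unit bisection to be isotropic and gives strong constraints; in the rackoid case the equivariance relation is linear in $d\beta_\Sigma$ and $d\beta_\Tau$ separately, so vanishing of each input implies vanishing of the output, and closure under $\rhd$ is automatic. I would conclude by remarking that the argument also shows that the isotropic bisections form a stable subset under the conjugation-like action of any bisection, not merely under mutual products among themselves.
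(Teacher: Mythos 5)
Your proof is correct and is exactly the paper's argument: substitute $d\beta_{\Sigma}=0$ and $d\beta_{\Tau}=0$ into the equivariance formula of Corollary \ref{expression_dB} and observe that every term vanishes by linearity of the pullbacks. One caveat: your closing remark overclaims, since for a non-isotropic $\Sigma$ acting on an isotropic $\Tau$ the formula leaves the residue $d\beta_{\Sigma}-\phi_1^*\psi_1^*(\phi_1^{-1})^*d\beta_{\Sigma}$, which need not vanish, so isotropic bisections are \emph{not} shown to be stable under the action of arbitrary bisections.
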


\begin{proof}
This follows from the property (c) above (resp. Lemma \ref{expression_dB})
together with 
$d\beta_{\Sigma}=0$ and $d\beta_{\Tau}=0$. 
\end{proof}

\begin{prop}
The Lie rackoid associated to a symplectic Lie groupoid $\Gamma\to M$ is a
symplectic Lie rackoid.
%More precisely, we have for all bisections
%$\Sigma$, $\Tau$
%$$d\beta_{\Sigma*\Tau} = d\beta_{\Sigma}+\phi_1^*d\beta_{\Tau},$$
%where $*$ denotes the group product on bisections of $\Gamma$. 
\end{prop}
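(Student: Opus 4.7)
The plan is to deduce the equivariance relation (\ref{equivariance_property}) directly from the multiplicativity of the symplectic form $\omega$ on $\Gamma$. By \cite{LauWag}, the Lie rackoid underlying $\Gamma$ has the same pre-category $s,t:\Gamma\to M$ and its rackoid product on bisections is the pointwise groupoid conjugation $\Sigma\rhd\Tau:=\Sigma\cdot\Tau\cdot\Sigma^{-1}$. Taking $\omega$ itself as the closed $2$-form on the total space $\Gamma$, the preceding pullback lemma specialises to $d\beta_\Sigma=\Sigma^{*}\omega$ when $\Sigma$ is regarded as a section of the source map, and the underlying diffeomorphism $\phi_\Sigma:=t\circ\Sigma$ satisfies $\phi_{\Sigma\cdot\Tau}=\phi_\Tau\circ\phi_\Sigma$, matching the conventions used in the cotangent case.

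The crux is to turn the multiplicativity identity $\mathrm{comp}^{*}\omega=pr_1^{*}\omega+pr_2^{*}\omega$ on $\Gamma^{(2)}$ into a cocycle for the assignment $\Sigma\mapsto d\beta_\Sigma$. First I would pull the multiplicativity back along the canonical map $m\mapsto(\Sigma(m),\Tau(\phi_\Sigma(m)))$ from $M$ to $\Gamma^{(2)}$, obtaining
$$d\beta_{\Sigma\cdot\Tau}=d\beta_\Sigma+\phi_\Sigma^{*}d\beta_\Tau.$$
Since the unit section $\epsilon\colon M\to\Gamma$ is Lagrangian in any symplectic Lie groupoid, $d\beta_\epsilon=0$; applying the cocycle to $\Sigma\cdot\Sigma^{-1}=\epsilon$ then forces $d\beta_{\Sigma^{-1}}=-(\phi_\Sigma^{-1})^{*}d\beta_\Sigma$. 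Applying the cocycle twice to $\Sigma\cdot(\Tau\cdot\Sigma^{-1})$ gives
$$d\beta_{\Sigma\rhd\Tau}=d\beta_\Sigma+\phi_\Sigma^{*}d\beta_\Tau-\phi_\Sigma^{*}\phi_\Tau^{*}(\phi_\Sigma^{-1})^{*}d\beta_\Sigma,$$
which on writing $\phi_1:=\phi_\Sigma$ and $\psi_1:=\phi_\Tau$ is exactly (\ref{equivariance_property}).

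The genuine geometric input is the multiplicativity of $\omega$ together with the Lagrangianity of the units, both standard features of symplectic Lie groupoids; everything else is formal. The main obstacle is purely one of bookkeeping: the cotangent case writes the rackoid product on the diffeomorphism level as $\phi_\Sigma^{-1}\circ\phi_\Tau\circ\phi_\Sigma$, and one must check that this matches left conjugation $\Sigma\cdot\Tau\cdot\Sigma^{-1}$ of bisections under the (order-reversing) identification $\phi_{A\cdot B}=\phi_B\circ\phi_A$, so that cocycle and equivariance relation are aligned with the correct orientation.
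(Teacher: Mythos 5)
Your proposal is correct and follows essentially the same route as the paper: both derive the cocycle $d\beta_{\Sigma\cdot\Tau}=d\beta_\Sigma+\phi_1^*d\beta_\Tau$ from multiplicativity of $\omega$, use $\Sigma\cdot\Sigma^{-1}=\epsilon$ to get $d\beta_{\Sigma^{-1}}=-(\phi_1^{-1})^*d\beta_\Sigma$, and apply the cocycle twice to the conjugation $\Sigma\cdot\Tau\cdot\Sigma^{-1}$. Your explicit remarks on the Lagrangianity of the unit section and on matching the composition conventions are slightly more careful than the paper's write-up but do not change the argument.
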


\begin{proof}
We refer to Proposition 5.1 in \cite{LauWag} for the Lie rackoid underlying
a Lie groupoid.

The multiplicativity of the symplectic form in a symplectic Lie groupoid
$\Gamma\to M$ can be expressed as
$$\omega_{\gamma_1\gamma_2}(a\cdot b,c\cdot d)=\omega_{\gamma_1}(a,c)+\omega_{\gamma_2}
(b,d),$$
where $\gamma_1,\gamma_2\in\Gamma$ are composable, $a,b,c,d$ are tangent
vectors to $\Gamma$ in $\gamma_1$ resp. $\gamma_2$, and
$(a,c)\mapsto a\cdot c$ is the product on the tangent groupoid associated
to $\Gamma$. 

Denoting by $*$ the group product on bisections of $\Gamma$, we obtain thus
\begin{eqnarray*}
\omega(\Sigma*\Tau(u),\Sigma*\Tau(v))&=&\omega(\Sigma(u)*\Tau((\phi_1)_*(u)),
\Sigma(v)
*\Tau((\phi_1)_*(v))) \\
&=&\omega(\Sigma(u),\Sigma(v))+\omega(\Tau((\phi_1)_*(u)),\Tau((\phi_1)_*(v))),
\end{eqnarray*}
for two tangent vectors $u,v$. Thanks to the fact that $d\beta_{\Sigma}$ is the
pullback of the symplectic form on $Y$, this relation for all $u,v$ translates
into 
$$d\beta_{\Sigma*\Tau} = d\beta_{\Sigma}+\phi_1^*d\beta_{\Tau}.$$

From this, we deduce first of all for the inverse bisection $\Sigma^{-1}$ of
$\Sigma$
$$0=d\beta_{\Sigma*\Sigma^{-1}}=d\beta_{\Sigma}+\phi_1^*d\beta_{\Sigma^{-1}},$$
which implies then
$$d\beta_{\Sigma^{-1}}=-(\phi_1^{-1})^*d\beta_{\Sigma}.$$
With this, we compute
\begin{eqnarray*}
d\beta_{\Sigma*\Tau*\Sigma^{-1}}&=&d\beta_{\Sigma}+\phi_1^*d\beta_{\Tau*\Sigma^{-1}}  \\
&=&d\beta_{\Sigma}+\phi_1^*(\psi_1^*d\beta_{\Sigma^{-1}}+d\beta_{\Tau}) \\
&=&d\beta_{\Sigma}-\phi_1^*\psi_1^*(\phi_1^{-1})^*d\beta_{\Sigma}+\phi_1^*d\beta_{\Tau}
\end{eqnarray*}
This gives for the Lie groupoid, viewed as Lie rackoid: 
$$d\beta_{\Sigma\rhd\Tau}=d\beta_{\Sigma}+\phi_1^*d\beta_{\Tau}-\phi_1^*
\psi_1^*(\phi_1^{-1})^*d\beta_{\Sigma}.$$
\end{proof}

\subsection{Identifying Dirac structures inside $A$}

Recall our Leibniz algebroid $A$, obtained by passing to the tangent Leibniz
algebroid of the 
Lie rackoid of cotangent paths $Y$. Let $D\subset\T M$ denote a
{\it Dirac structure}, i.e. a maximally isotropic subbundle of $\T M$ whose
sections form a Leibniz subalgebra under the Dorfman bracket. This implies
that the Dorfman bracket becomes a Lie bracket for the sections of the
subbundle $D$ and $D$ is thus in a natural way a Lie algebroid. 
On top of this, we will always suppose that our Dirac structures are
{\it integrable}, i.e. the Lie algebroid corresponding to $D$ integrates
into a Lie groupoid. This will play a role in Section $5$ where we identify
the Lie groupoids integrating Dirac structures inside $Y$. 

To $D\subset\T M$, we associate a Leibniz subalgebroid $A(D)$ of $A$ by
$$A(D)_m:=\{(X_t,\alpha_t)\in A_m\,|\,(\dot{X}_t,\alpha_t)\in D_m\,\,\,\forall
t\in[0,1]\}$$
at each point $m\in M$. Observe that this defines a vector subbundle
$A(D)\to M$ of $A\to M$. 

\begin{prop}
The restriction of the map $\varphi=({\rm ev}_1,\int_0^1(-)dt):A\to\T M$ to
$A(D)$ has values in $D\subset \T M$. 
\end{prop}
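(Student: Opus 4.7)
The plan is to unfold the definition of $\varphi$ on an element of $A(D)$ and recognize that the resulting vector in $\T_m M$ is in fact the integral of a $D_m$-valued curve. The key geometric input is that $D_m$ is a finite-dimensional vector subspace of $\T_m M$, hence closed, so integrals of continuous curves lying in $D_m$ stay in $D_m$.

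Concretely, I would fix $m\in M$ and $(X_t,\alpha_t)\in A(D)_m$. By definition of $A$, the path $X_t$ is pointed, i.e.\ $X_0=0$, so the fundamental theorem of calculus gives
$$X_1 \,=\, \int_0^1 \dot{X}_s\,ds.$$
Applying the map $\varphi=(\mathrm{ev}_1,\int_0^1(-)\,dt)$, I obtain
$$\varphi(X_t,\alpha_t) \,=\, \Bigl(X_1,\,\int_0^1\alpha_s\,ds\Bigr) \,=\, \int_0^1 (\dot{X}_s,\alpha_s)\,ds,$$
the integral being taken componentwise in the finite-dimensional vector space $\T_m M=T_mM\oplus T^*_mM$.

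The hypothesis defining $A(D)$ says exactly that $(\dot{X}_s,\alpha_s)\in D_m$ for every $s\in[0,1]$. Since $D_m\subset \T_m M$ is a linear subspace of a finite-dimensional vector space, it is closed, and therefore the Riemann integral of any continuous $D_m$-valued curve remains in $D_m$. I would make this last step explicit by approximating the integral by Riemann sums, which lie in $D_m$ by linearity, and passing to the limit inside the closed subspace $D_m$. This yields $\varphi(X_t,\alpha_t)\in D_m$, as required.

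There is no real obstacle here: the argument is essentially the observation that $\varphi$ on $A(D)$ factors through pointwise integration of $D$-valued curves. The only thing worth flagging is the mild abuse in the notation $\int_0^1(-)\,dt$, which applied to $(X_t,\alpha_t)$ must be interpreted as $(X_1,\int_0^1\alpha_s\,ds)$; the identity $X_1=\int_0^1\dot{X}_s\,ds$ coming from $X_0=0$ is what makes $\varphi$ compatible with the integrated-curve interpretation that puts it inside $D$.
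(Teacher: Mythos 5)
Your proof is correct and follows exactly the same route as the paper's: write $X_1=\int_0^1\dot{X}_s\,ds$ using $X_0=0$, so that $\varphi(X_t,\alpha_t)=\int_0^1(\dot{X}_s,\alpha_s)\,ds$ is an integral of a $D_m$-valued curve. Your added remark that $D_m$ is a closed linear subspace (so Riemann sums and their limit stay in $D_m$) just makes explicit the step the paper leaves implicit.
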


\begin{proof}
We have 
$$X_1=\int_0^1\dot{X}_t\,dt,$$
because $X_0=0$, and thus 
$$\varphi(X_t,\alpha_t)=\int_0^1(\dot{X}_t,\alpha_t)\,dt\in D_m,$$
because $(\dot{X}_t,\alpha_t)\in D_m$ for all $m\in M$. 
\end{proof}

\begin{prop}
The bracket on $\Gamma(A)={\mathfrak Y}$ restricts to a bracket on
$\Gamma(A(D))$:
$$[\Gamma(A(D)),\Gamma(A(D))]\subset\Gamma(A(D)).$$
\end{prop}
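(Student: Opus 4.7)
The plan is to show that the formula for the Leibniz bracket, once one differentiates its first component in $t$, is literally the Dorfman bracket between the $\varphi$-image of the first argument and the Dirac-valued path $(\dot{Y}_t, \beta_t)$. The closedness of $D$ under the Dorfman bracket then closes the argument.

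Concretely, let $(X_t, \alpha_t)$ and $(Y_t, \beta_t)$ be two sections of $A(D)$, and set $\omega := \int_0^1 \alpha_s\,ds$. By Lemma \ref{infinitesimal_bracket},
$$[(X_t, \alpha_t), (Y_t, \beta_t)] \;=\; \bigl([X_1, Y_t],\, L_{X_1}\beta_t - i_{\dot{Y}_t}\,d\omega\bigr).$$
To verify membership in $\Gamma(A(D))$ I have to check two things: (i) the first component vanishes at $t=0$, which is automatic since $Y_0 = 0$; (ii) for every $t \in [0,1]$, the pair
$$\bigl(\partial_t[X_1, Y_t],\, L_{X_1}\beta_t - i_{\dot{Y}_t}\,d\omega\bigr) \;=\; \bigl([X_1, \dot{Y}_t],\, L_{X_1}\beta_t - i_{\dot{Y}_t}\,d\omega\bigr)$$
lies in $D$. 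But this last expression is exactly the Dorfman bracket
$$\bigl[(X_1, \omega),\, (\dot{Y}_t, \beta_t)\bigr]_{\mathrm{Dorfman}}.$$

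Now the preceding Proposition tells us that $\varphi(X_t, \alpha_t) = (X_1, \omega)$ is a section of $D$, and the defining condition of $A(D)$ ensures that $(\dot{Y}_t, \beta_t)$ is a section of $D$ for each fixed $t$. Since $D$ is a Dirac structure, its sections are closed under the Dorfman bracket, so the pair above lies in $\Gamma(D)$ for every $t$. This establishes (ii), and together with (i) it gives $[(X_t, \alpha_t), (Y_t, \beta_t)] \in \Gamma(A(D))$.

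There is essentially no obstacle beyond recognizing the Dorfman bracket inside the formula of Lemma \ref{infinitesimal_bracket}; the key conceptual step is that the $t$-derivative of the first component is what one has to feed into the pair, so that the conditions defining $A(D)$ are exactly tailored to convert the bracket on $A$ into the Dorfman bracket on $\T M$ paired with a Dirac-valued path.
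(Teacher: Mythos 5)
Your proof is correct, and it takes a genuinely different (and in one respect more careful) route than the paper's. The paper rewrites $X_1=\int_0^1\dot{X}_s\,ds$ and expresses the whole bracket as the integral $\int_0^1([\dot{X}_s,Y_t],L_{\dot{X}_s}\beta_t-i_{\dot{Y}_t}\,d\alpha_s)\,ds$, then invokes closedness of $D$ under the Dorfman bracket pointwise in $s$ together with linearity of $D_m$; it never explicitly performs the $t$-differentiation of the first component that the defining condition of $A(D)$ demands (its displayed integrand carries $Y_t$ rather than $\dot{Y}_t$ in the first slot, and the final line asserts that the bracket itself lies in $D_m$, which is an abuse of notation). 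You instead differentiate the first component in $t$, recognize the resulting pair $([X_1,\dot{Y}_t],\,L_{X_1}\beta_t-i_{\dot{Y}_t}\,d\omega)$ as the single Dorfman bracket $[(X_1,\omega),(\dot{Y}_t,\beta_t)]$, and use the preceding proposition (that $\varphi$ maps $A(D)$ into $D$) to know that $(X_1,\omega)$ is a section of $D$. This buys a shorter argument with no integral manipulation and makes the verification of the $A(D)$-condition fully explicit, whereas the paper's decomposition into an $s$-integral of Dorfman brackets of the pointwise data $(\dot{X}_s,\alpha_s)$ avoids appealing to the previous proposition. Both arguments rest on the same essential fact, the involutivity of $\Gamma(D)$ under the Dorfman bracket, and your separate check that the first component vanishes at $t=0$ (from $Y_0=0$) is needed and correct.
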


\begin{proof}
For this, we first notice that the bracket of two elements in $\Gamma(A(D))$
may be written as an integral over $[0,1]$:
\begin{eqnarray*}
[(X_s,\alpha_s),(Y_t,\beta_t)]&=&([X_1,Y_t],L_{X_1}\beta_t-i_{\dot{Y}_t}\,d\,\int_0^1\alpha_s\,ds) \\
&=&([\int_0^1\dot{X}_s\,ds,Y_t],\int_0^1L_{\dot{X}_s}\beta_t\,ds-i_{\dot{Y}_t}\,d\,\int_0^1\alpha_s\,ds) \\
&=&\int_0^1([\dot{X}_s,Y_t],L_{\dot{X}_s}\beta_t-i_{\dot{Y}_t}\,d\alpha_s)\,ds. 
\end{eqnarray*}
Next, the integrand
$([\dot{X}_s,Y_t],L_{\dot{X}_s}\beta_t-i_{\dot{Y}_t}\,d\alpha_s)$ is in $D_m$ at
each point, because the Dirac structure is closed under the Courant (Dorfman)
bracket. Thus 
$$[(X_s,\alpha_s),(Y_t,\beta_t)]\in D_m$$
for all $t,s\in[0,1]$.
\end{proof}

\begin{cor}
The induced map $\varphi |_{A(D)}:A(D)\to D$ is a morphism of Leibniz
algebroids. In particular, the kernel of (the induced map on the level of
sections of) $\varphi |_{A(D)}$ contains (the intersection of $A(D)$ with) the
ideal of squares. 
\end{cor}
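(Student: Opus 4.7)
The plan is to observe that this corollary essentially repackages the two propositions immediately preceding it, together with one standard fact about Dirac structures. First I would note that the previous proposition shows $\varphi|_{A(D)}$ takes values in $D\subset \T M$, and the proposition before the corollary shows that $\Gamma(A(D))$ is closed under the Leibniz bracket of $A$, so that $A(D)$ is a genuine Leibniz subalgebroid of $A$. Since $\varphi\:A\to \T M$ is already a morphism of Leibniz algebroids by Proposition \ref{quotient_proposition} (compatible with anchors because its first component is $\mathrm{ev}_1$, and compatible with brackets by direct computation), the restriction to a subalgebroid automatically inherits both compatibilities with the restricted structures on $A(D)$ and $D$. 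This gives the first assertion of the corollary for free.

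For the ``in particular'' part, the plan is to invoke the definition of Dirac structure: $D\subset \T M$ is maximally isotropic for the canonical symmetric pairing on $\T M$. The symmetric part of the Dorfman bracket is exhibited (up to the standard factor) by the identity $[e,e]=d\langle e,e\rangle$, which vanishes identically on $\Gamma(D)$ by isotropy. Thus the Dorfman bracket becomes honestly antisymmetric on $\Gamma(D)$, making $D$ a Lie algebroid, and in particular $[x,x]=0$ for every $x\in\Gamma(D)$. Consequently, for any $a\in\Gamma(A(D))$ the element $[a,a]\in\Gamma(A(D))$ is sent by $\varphi|_{A(D)}$ to $[\varphi(a),\varphi(a)]=0$. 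Extending by bilinearity and taking $\mathcal{C}^{\infty}(M)$-linear combinations, the whole intersection of the ideal of squares with $\Gamma(A(D))$ lies in $\ker(\varphi|_{A(D)})$.

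There is essentially no obstacle of substance here; the only thing to be careful about is notational hygiene, namely to distinguish between the ideal of squares at the level of sections (where $[a,a]$ is literally defined) and its bundle-theoretic counterpart inside $A(D)$, and to check that squares of sections of $A(D)$ remain in $\Gamma(A(D))$, which is already guaranteed by the preceding proposition. All the real mathematical work has been done in Proposition \ref{quotient_proposition} (morphism property of $\varphi$) and in the immediately preceding proposition (closure of $\Gamma(A(D))$ under the bracket); the corollary merely records the consequence.
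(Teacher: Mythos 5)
Your proposal is correct and follows exactly the route the paper intends: the corollary is stated without proof precisely because it is the immediate combination of the two preceding propositions (values in $D$, closure of $\Gamma(A(D))$ under the bracket) with the morphism property of $\varphi$ from Proposition \ref{quotient_proposition}, plus the fact that isotropy of $D$ forces $[e,e]=d\langle e,e\rangle=0$ so that squares die in $\Gamma(D)$. Nothing is missing.
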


\begin{rem}
Observe that we have used in this proof the fact that the Dirac structure is
closed under the Courant (Dorfman) bracket. 
\end{rem}

We can also define the corresponding notion on the global level. Let 
$$Y_D:=\{(\gamma_t,\alpha_t)\in Y\,|\,(\dot{\gamma},\alpha)\in D_{\gamma(t)}\,\,\,
\forall t\in[0,1]\}.$$
It is then clear that $Y_D$ is a subprecategory of $Y$.
The properties of $Y_D$ will occupy a large space in the second half of
Section $5$. 

\section{Integration of the standard Courant algebroid}

To integrate the standard Courant algebroid, we introduce an equivalence relation on the level of bisections
${\rm Bis}(Y)$ and on the total space $Y$. The equivalence relation is rather
difficult to handle, but well behaved at the point $(\id,0)\in{\rm Bis}(Y)$. 
The corresponding infinitesimal Leibniz algebroid at $(\id,0)$
is isomorphic to the standard Courant algebroid. In the second half of this
section, we study the behaviour of the equivalence relation on $Y_D$ for
Dirac structures $D$ coming from integrable Poisson structures. We show that the
corresponding quotient gives its integrating symplectic Lie groupoid. 

\subsection{The equivalence relation on ${\rm Bis}(Y)$ and $Y$}

The equivalence relation is the following.
In order to formulate it, recall that 
we had shown for a bisection $\Sigma$ that
$\beta_{\Sigma}=(s^{-1}_{\Sigma})^*\lambda$ and
$d\beta_{\Sigma}=(s^{-1}_{\Sigma})^*\omega$.
In the following, we will deal with families of bisections
$\{\Sigma_t\}_{t\in[0,1]}$. 
For such a family, we introduce the notation
$\check{\beta}=\check{\Sigma}^*\lambda=(s_{\check{\Sigma}}^{-1})^*\lambda\in\Omega^1([0,1]\times M)$ and
$\check{\omega}=\check{\Sigma}^*\omega=d\check{\beta}\in\Omega^2([0,1]\times M)$, where
$\check{\Sigma}:[0,1]\times M\to Y$ is the map
$$(\epsilon,m)\mapsto(s\mapsto(\phi_s^{\Sigma_{\epsilon}}(m),
\eta_s^{\Sigma_{\epsilon}}(m)).$$
On the other hand, we will view $\Sigma_{\epsilon}$ either as the member
at $\epsilon$ of the family $\{\Sigma_t\}_{t\in[0,1]}$ or as the
map $\Sigma_{\epsilon}:[0,1]\to {\rm Bis}(Y)$. 
Denote by $u=\frac{\partial}{\partial t}$.
Let us define $i_{\epsilon}:M\to M\times[0,1]$ by
$m\mapsto (m,\epsilon)$.
We shall study the relation
\begin{eqnarray}   \label{congruence_relation}
\Sigma\sim\Sigma'&:\Leftrightarrow&\phi_1=\phi_1'\,\,\,{\rm and}   \\ \nonumber
&&\exists\,\{\Sigma_t\}_{t\in[0,1]}\,\,\,{\rm with}\,\,\,\Sigma_0=\Sigma,\,\,\,
\Sigma_1=\Sigma'\,\,\,{\rm and}\,\,\,i^*_{\epsilon}i_{u}\check{\omega}=0.
\end{eqnarray}

With loss of generality, we can assume that $\Sigma_t$ is constant in some neighborhoods of $0$ and $1$ which allows to glue such families and thus implies that $\sim$ is an equivalence relation. 

\begin{lem}  \label{new_expression_congruence_relation}
\begin{enumerate}
\item[(a)] The condition $i^*_{\epsilon}i_{u}\check{\omega}=0$ is equivalent to $i_{\epsilon}^*d\check{\beta}(u)=\frac{\partial\beta_{\Sigma_{\epsilon}}}{\partial\epsilon}$. 
\item[(b)] The condition $i^*_{\epsilon}i_{u}\check{\omega}=0$ is equivalent to
$$\omega_Y\Big(\frac{\partial \Sigma_t}{\partial t}(m),T\Sigma_t(\delta m)
\Big)=0$$
for every tangent vector $\delta m\in T_mM$.
\item[(c)] The condition $i^*_{\epsilon}i_{u}\check{\omega}=0$ is equivalent to the condition that 
$\frac{\partial \Sigma_t}{\partial t}$ is in the kernel of the map
$$T_{\Sigma}{\mathcal C}^{\infty}(M,Y)\supset T_{\Sigma}{\rm Bis}(Y)\to
\Omega^1(M),\,\,\,\delta\Sigma\mapsto
T\beta(\delta\Sigma)-i_{\epsilon}^*d\langle \lambda,\delta\Sigma\rangle.$$
\end{enumerate}
\end{lem}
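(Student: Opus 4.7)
My plan is to prove the three equivalences by unwinding the definition of the pullback $\check\omega=\check\Sigma^*\omega$ and applying Cartan's magic formula. Parts (b) and (a) will fall out of direct calculations, after which (c) is just a reformulation of (a) in terms of the differential of the assignment $\Sigma\mapsto\beta_\Sigma$.

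\medskip

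First I would establish (b), which is really the most transparent, because it is the definition of the pullback of $\omega=\omega_Y$ along $\check\Sigma\:[0,1]\ti M\to Y$. For a tangent vector $\delta m\in T_mM$, the map $Ti_\epsilon$ sends $\delta m$ to the horizontal vector $(0,\delta m)\in T_{(\epsilon,m)}([0,1]\ti M)$, and $T\check\Sigma$ then sends this to $T_m\Sigma_\epsilon(\delta m)\in T_{\Sigma_\epsilon(m)}Y$. Likewise $T\check\Sigma(u)=\partial_t\Sigma_t(m)$. Consequently
$$(i^*_\epsilon i_u\check\omega)(\delta m)\,=\,\check\omega(u,(0,\delta m))\,=\,\omega_Y\bigl(\partial_t\Sigma_t(m),T\Sigma_t(\delta m)\bigr),$$
and this equality, holding for every $\delta m$, yields (b).

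\medskip

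Next I would prove (a) by applying Cartan's formula $L_u\check\beta=di_u\check\beta+i_ud\check\beta$ to the $1$-form $\check\beta$ on $[0,1]\ti M$ and pulling back via $i_\epsilon$. Because $u=\partial/\partial\epsilon$ is tangent to the curves traced out by varying $\epsilon$ while fixing $m$, we have the identity $i_\epsilon^*L_u\check\beta=\frac{\partial}{\partial\epsilon}(i_\epsilon^*\check\beta)=\frac{\partial\beta_{\Sigma_\epsilon}}{\partial\epsilon}$. On the other hand, $i_u\check\beta=\check\beta(u)$ is a scalar function, and its pullback commutes with $d$, so $i_\epsilon^*di_u\check\beta=d\bigl(i_\epsilon^*\check\beta(u)\bigr)$, which is exactly the $1$-form $d\check{\beta}(u)$ evaluated after restriction. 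Rearranging gives
$$i_\epsilon^*i_u\check\omega \,=\, \frac{\partial\beta_{\Sigma_\epsilon}}{\partial\epsilon} \,-\, i_\epsilon^*d(\check\beta(u)),$$
which shows at once that $i_\epsilon^*i_u\check\omega=0$ is equivalent to the stated identity.

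\medskip

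For (c), I would simply rewrite the right-hand side of (a) using the interpretation of $T\beta$. By definition of the (Fr\'echet) derivative of $\beta\:{\rm Bis}(Y)\to\Omega^1(M)$, $\Sigma\mapsto \beta_\Sigma$, evaluating $T\beta$ on the variation $\delta\Sigma=\partial_t\Sigma_t$ recovers exactly $\frac{\partial\beta_{\Sigma_\epsilon}}{\partial\epsilon}$, while $\langle\lambda,\delta\Sigma\rangle$ is the scalar function $m\mapsto\lambda_{\Sigma(m)}(\delta\Sigma(m))$, whose pullback under $i_\epsilon$ and subsequent de Rham differential on $M$ agrees with $i_\epsilon^*d(\check\beta(u))$. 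Thus the map in (c) assigns to $\delta\Sigma=\partial_t\Sigma_t$ precisely $i_\epsilon^*i_u\check\omega$, and (c) follows from (a).

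\medskip

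I do not anticipate any deep obstacle: all three items amount to unravelling the same identity in different languages. The only care that must be taken is bookkeeping of what lives on which space---the difference between a $1$-form on $[0,1]\ti M$, its restriction to a slice $\{\epsilon\}\ti M\cong M$, and a curve in $\Omega^1(M)$---and checking that Cartan's formula and its consequences make sense in this Fr\'echet setting, which is legitimate since $\check\Sigma$ is smooth and $M$ is compact finite-dimensional.
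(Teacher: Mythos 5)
Your proposal is correct and follows essentially the same route as the paper: Cartan's formula $L_u\check\beta=i_ud\check\beta+di_u\check\beta$ pulled back along $i_\epsilon$ for (a), the definition of the pullback $\check\omega=\check\Sigma^*\omega$ for (b), and the identifications $T\beta(\partial_t\Sigma_t)=\partial_\epsilon\beta_{\Sigma_\epsilon}$ and $\langle\lambda,\partial_t\Sigma_t\rangle=\check\beta(u)$ for (c). The only difference is cosmetic: the paper verifies $i_\epsilon^*L_u\check\beta=\partial_\epsilon\beta_{\Sigma_\epsilon}$ by an explicit local-coordinate computation, whereas you assert it as a standard property of the coordinate vector field $u=\partial/\partial t$.
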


\begin{proof}
\begin{enumerate}
\item[(a)] Recall the map $i_{\epsilon}:M\to M\times[0,1]$ given by
  $m\mapsto (m,\epsilon)$. We have
  $d\circ i_{\epsilon}^*= i_{\epsilon}^*\circ d$ where one de Rham differential
  $d$ is on $M$ and the other on $M\times[0,1]$. We claim that
  $$\frac{\partial\beta_{\Sigma_{\epsilon}}}{\partial\epsilon}=
  i_{\epsilon}^*L_{u}\check{\beta}.$$
  Indeed, if
  $$\check{\beta}=\sum_ia_i(x,t)dx_i+b(x,t)dt,$$
  we have on the one hand
  $$ i_{\epsilon}^*di_{u}\check{\beta}=\sum_i
  \frac{\partial b}{\partial x_i}(x,\epsilon)dx_i,$$
  and on the other hand
  $$i_{\epsilon}^*i_{u}d\check{\beta}=\sum_i
  \frac{\partial a_i}{\partial t}(x,\epsilon)dx_i
  -\sum_i\frac{\partial b}{\partial x_i}(x,\epsilon)dx_i.$$
  But
  $$\frac{\partial\beta_{\Sigma_{\epsilon}}}{\partial\epsilon}=\sum_i
  \frac{\partial a_i}{\partial t}(x,\epsilon)dx_i,$$
  because $\beta_{\Sigma_{\epsilon}}=\sum_ia_i(x,\epsilon)dx_i$ is the restriction
  $\check{\beta}|_{M\times\{\epsilon\}}$ of
  $\check{\beta}$ to $M\times\{\epsilon\}$, whence the claim. 

  We obtain therefore  
\begin{eqnarray*}
\frac{\partial\beta_{\Sigma_{\epsilon}}}{\partial\epsilon}&=&i_{\epsilon}^*L_{u}
\check{\beta}   \\
&=& i_{\epsilon}^*i_{u}d\check{\beta}+i_{\epsilon}^*d i_{u}\check{\beta}  \\
&=& i_{\epsilon}^*i_{u}\check{\omega}+i_{\epsilon}^*d i_{u}\check{\beta}.
\end{eqnarray*}
Thus the condition 
$$i_{\epsilon}^*d\check{\beta}(u)=
\frac{\partial\beta_{\Sigma_{\epsilon}}}{\partial\epsilon}$$
is equivalent to $i^*_{\epsilon}i_{u}\check{\omega}=0$.

\item[(b),(c)] Recall that the tangent space $T_{\Sigma}{\mathcal C}^{\infty}(M,Y)$ to
${\mathcal C}^{\infty}(M,Y)$ at a map $\Sigma:M\to Y$
consists of the maps $f:M\to TY$ such that $p\circ f=\Sigma$ where $p:TY\to Y$
is the canonical projection. Let us view $\beta$ as a map
$$\beta:{\rm Bis}(Y)\to \Omega^1(M),\,\,\,\Sigma\mapsto\beta_{\Sigma}.$$
As ${\rm Bis}(Y)$ inherits its manifold structure from
${\mathcal C}^{\infty}(M,Y)$, its
tangent space at $\Sigma$ is a subspace of $T_{\Sigma}{\mathcal C}^{\infty}(M,Y)$.
The map $T\beta_{\Sigma_0}$, the value of the bundle map $T\beta$ at the
bisection
$\Sigma_0$, sends a tangent vector $\delta\Sigma$ to ${\rm Bis}(Y)$
at $\Sigma_0$ to a $1$-form on $M$. 

Therefore the condition
$$i_{\epsilon}^*d\check{\beta}(u)=
\frac{\partial\beta_{\Sigma_{\epsilon}}}{\partial\epsilon}$$
in the congruence relation (\ref{congruence_relation}) is equivalent to
the condition
\begin{equation}   \label{new_expression}
i_{\epsilon}^*d\lambda\Big(\frac{\partial\Sigma_{\epsilon}}{\partial\epsilon}
\Big) =
T\beta_{\Sigma_{\epsilon}}\Big(\frac{\partial\Sigma_{\epsilon}}{\partial\epsilon}\Big).
\end{equation}

This follows on the one hand from 
$$T\beta_{\Sigma_{\epsilon}}\Big(\frac{\partial\Sigma_{\epsilon}}{\partial\epsilon}
\Big)=
\frac{\partial\beta_{\Sigma_{\epsilon}}}{\partial\epsilon},$$
and on the other hand from 
$$\check{\beta}(u)=\langle\check{\Sigma}^*\lambda,u\rangle=\langle \lambda,
T\check{\Sigma}(u)\rangle=\Big\langle \lambda,
\frac{\partial\Sigma_{\epsilon}}{\partial\epsilon}\Big\rangle.$$

Note that Equation (\ref{new_expression}) implies
$$i_{\epsilon}^*d\lambda(\delta\Sigma) =
T\beta_{\Sigma_{\epsilon}}(\delta\Sigma)$$
for each tangent vector $\delta\Sigma$ to $\Bis(Y)$ as all of them may be
described as being the derivative $\frac{\partial \Sigma_t}{\partial t}$ of a curve.
In case a family of bisections
$\{\Sigma_t\}_{t\in[0,1]}$ satisfies the equivalence relation, we have
$$\omega_Y\Big(\frac{\partial \Sigma_t}{\partial t}(m),T\Sigma_t(\delta m)
\Big)=0$$
for every tangent vector $\delta m$ to $M$. Indeed, as
$\check{\omega}=\check{\Sigma}^*\omega$, we have
\begin{eqnarray*}
i_{\epsilon}^*i_u\check{\omega}&=&i_{\epsilon}^*\check{\omega}\Big(
\frac{\partial}{\partial t}\Big)  \\
&=&i_{\epsilon}^*(\check{\Sigma}^*\omega)\Big(
\frac{\partial}{\partial t}\Big)  \\
&=&i_{\epsilon}^*\omega(T\check{\Sigma}_*\Big(\frac{\partial}{\partial t}\Big),
T\check{\Sigma}_*(-))  \\
&=&i_{\epsilon}^*\omega(T\check{\Sigma}_*\Big(\frac{\partial}{\partial t}\Big),
T\check{\Sigma}_*(-))  \\
&=&\omega\Big(\frac{\partial \Sigma_t}{\partial t},T\Sigma_t(-)\Big).
\end{eqnarray*}
\end{enumerate}
\end{proof}

\begin{rem}  \label{remark_2_form_constant}
Note that the property
$$i_{\epsilon}^*d\check{\beta}(u)=
\frac{\partial\beta_{\Sigma_{\epsilon}}}{\partial\epsilon}$$
may be stated as the fact that the family of $1$-forms $\beta_{\Sigma_{\epsilon}}$
on $M$ is constant up to a well-determined exact form, namely
$di_{\epsilon}^*\check{\beta}(u_{\epsilon})=i_{\epsilon}^*d\check{\beta}(u_{\epsilon})$.
In particular, the exact $2$-form $d\beta_{\Sigma_{\epsilon}}$ is constant. Therefore 
if $\Sigma\sim\Sigma'$, then $d\beta_{\Sigma}=d\beta_{\Sigma'}$. 
\end{rem}

\begin{lem}  \label{lemma_congruence_relation}
The equivalence relation is a {\it congruence relation}, i.e.
if $\Sigma\sim\Sigma'$, then we have 
$\Tau\rhd\Sigma\sim\Tau\rhd\Sigma'$ and $\Sigma\rhd\Tau\sim\Sigma'\rhd\Tau$
for every bisection $\Tau$.
\end{lem}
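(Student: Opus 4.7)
The relation is essentially formal. Inspection of Lemma \ref{lemma_formula_action_bisection} shows that $\Sigma \rhd \Tau$ depends on $\Sigma = (\phi, \eta)$ only through the pair $(\phi_1, d\beta_\Sigma)$. The hypothesis $\Sigma \sim \Sigma'$ gives $\phi_1 = \phi_1'$ by definition and $d\beta_\Sigma = d\beta_{\Sigma'}$ by Remark \ref{remark_2_form_constant}, so $\Sigma \rhd \Tau = \Sigma' \rhd \Tau$ already on the nose, making the congruence trivial via the constant family.

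\textbf{Case 2 ($\Tau \rhd \Sigma \sim \Tau \rhd \Sigma'$):} This is the substantive case, since $\Tau \rhd \Sigma$ depends on the whole path of $\Sigma$. Writing $\Tau = (\psi, \zeta)$ and picking a family $\{\Sigma_\epsilon\}_{\epsilon \in [0,1]}$ realising $\Sigma \sim \Sigma'$ (which, using the gluing freedom noted after Equation (\ref{congruence_relation}), we may assume has $\phi_{\epsilon,1}$ independent of $\epsilon$), set $\widetilde\Sigma_\epsilon := \Tau \rhd \Sigma_\epsilon$. The endpoint clause of $\sim$ is immediate: $\widetilde\phi_{\epsilon,1} = \psi_1^{-1} \phi_{\epsilon,1} \psi_1$ is $\epsilon$-constant. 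What remains is the infinitesimal condition $i_\epsilon^* i_u \widetilde{\check\omega} = 0$, which by Lemma \ref{new_expression_congruence_relation}(a) amounts to $\partial_\epsilon \beta_{\widetilde\Sigma_\epsilon} = d_M \widetilde f_\epsilon$ with $\widetilde f_\epsilon := i_\epsilon^* i_u \widetilde{\check\beta}$.

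By Lemma \ref{1_form_rack_product} applied with the roles of $\Sigma$ and $\Tau$ exchanged,
\begin{equation*}
\beta_{\widetilde\Sigma_\epsilon} = \psi_1^*\bigl(\beta_{\Sigma_\epsilon} - F(\epsilon)\bigr),\qquad F(\epsilon) := \int_0^1 \phi_{\epsilon,s}^* i_{\dot\phi_{\epsilon,s}} B\,ds,\quad B := (\psi_1^{-1})^* d\beta_\Tau.
\end{equation*}
A direct unwinding yields $\widetilde f_\epsilon = \psi_1^*(f_{\Sigma_\epsilon} - g)$ with $f_{\Sigma_\epsilon} := i_\epsilon^* i_u \check\beta$ and $g(\epsilon) := \int_0^1 B(\dot\phi_{\epsilon,s}, \partial_\epsilon \phi_{\epsilon,s})\,ds$. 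Since the hypothesis supplies $\partial_\epsilon \beta_{\Sigma_\epsilon} = d_M f_{\Sigma_\epsilon}$, the whole verification collapses to the single identity $\partial_\epsilon F(\epsilon) = d_M g(\epsilon)$.

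This last identity, the core of the argument, is established by pulling $B$ back to $[0,1]_\epsilon \times [0,1]_s \times M$ along $\Phi(\epsilon,s,m) := \phi_{\epsilon,s}(m)$. Since $dB = 0$, the closedness of $\Phi^* B$ in the $ds \wedge d\epsilon \wedge dx^i$-component reads
\begin{equation*}
\partial_\epsilon\bigl(\phi_{\epsilon,s}^* i_{\dot\phi_{\epsilon,s}} B\bigr) - \partial_s\bigl(\phi_{\epsilon,s}^* i_{\partial_\epsilon \phi_{\epsilon,s}} B\bigr) = d_M\bigl(B(\dot\phi_{\epsilon,s}, \partial_\epsilon \phi_{\epsilon,s})\bigr).
\end{equation*}
Integrating over $s \in [0,1]$, the middle term becomes the boundary contribution $\phi_{\epsilon,1}^* i_{\partial_\epsilon \phi_{\epsilon,1}} B - \phi_{\epsilon,0}^* i_{\partial_\epsilon \phi_{\epsilon,0}} B$, which vanishes since $\phi_{\epsilon,0} = \id_M$ and $\phi_{\epsilon,1}$ is $\epsilon$-constant. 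Thus $\partial_\epsilon F = d_M g$ as required, completing the proof.
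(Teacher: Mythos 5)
Your proof is correct and structurally parallel to the paper's: Case 1 is handled identically (via $\phi_1=\phi_1'$ together with Remark \ref{remark_2_form_constant}, giving $\Sigma\rhd\Tau=\Sigma'\rhd\Tau$ on the nose), and in Case 2 you reduce, exactly as the paper does via Lemma \ref{1_form_rack_product} and Lemma \ref{new_expression_congruence_relation}(a), to the single identity $\partial_\epsilon F(\epsilon)=d_M g(\epsilon)$, which is precisely the paper's Formula (\ref{rest_to_show}). The only genuine divergence is how that identity is proved. The paper restricts both sides to an arbitrary path $\gamma(t)$, writes $(\psi_1^{-1})^*d\beta_{\Tau}$ locally as $df\wedge dg$, and integrates by parts in $s$; you instead pull the closed form $B$ back along $\Phi(\epsilon,s,m)=\phi_{\epsilon,s}(m)$ and read off the $ds\wedge d\epsilon\wedge dx^i$-component of $d\Phi^*B=0$, then integrate the exact $\partial_s$-term over $s\in[0,1]$. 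Your version is more invariant and avoids both the reduction to decomposable $2$-forms and the ``equal on every path, hence equal'' step, at the cost of compressing the verification of $\widetilde f_\epsilon=\psi_1^*(f_{\Sigma_\epsilon}-g)$ into ``a direct unwinding'' (the paper spells out this chain of equalities, using $\partial_\epsilon(\psi_1^{-1}\circ\phi_s^{\Sigma_\epsilon}\circ\psi_1)=(T\psi_1^{-1})_*\partial_\epsilon\phi_s^{\Sigma_\epsilon}$). Both arguments kill the boundary term using $\phi_{\epsilon,0}=\id_M$ and the $\epsilon$-independence of $\phi_{\epsilon,1}$; note that the latter is not literally delivered by the gluing remark you invoke (which only concerns constancy of $\Sigma_t$ near $t=0,1$), but the paper makes the same unargued assumption, so this is a shared minor point rather than a defect of your proof.
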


\begin{proof}
First note that $\Sigma\rhd\Tau=\Sigma'\rhd\Tau$,
because $\Sigma\sim\Sigma'$ implies $\phi_1=\phi_1'$ and
$d\beta_{\Sigma}=d\beta_{\Sigma'}$, cf Remark \ref{remark_2_form_constant}. 
In particular, $\Sigma\rhd\Tau\sim\Sigma'\rhd\Tau$. 

Now for the equality $\Tau\rhd\Sigma\sim\Tau\rhd\Sigma'$,
consider three bisections $\Sigma$, $\Sigma'$ and $\Tau$ with
$\Sigma\sim\Sigma'$. The goal is
to show that
\begin{equation}    \label{congruence_condition}
d\beta_{\Tau\rhd\Sigma_{\epsilon}}(u_{\epsilon})|_{\{\epsilon\}\times M}=
\frac{\partial\beta_{\Tau\rhd\Sigma_{\epsilon}}}{\partial\epsilon},
\end{equation}
because this implies then $\Tau\rhd\Sigma\sim\Tau\rhd\Sigma'$. Using Formula
(\ref{action_bisection_on_bisection}) for $\Tau\rhd\Sigma_{\epsilon}$ (with
$\phi_{\epsilon,s}=\phi_s^{\Sigma_{\epsilon}}$
(resp. $\psi_s$) the first component of $\Sigma_{\epsilon}$ (resp. $\Tau$)),
we have for the
right hand side of Formula (\ref{congruence_condition})
\begin{eqnarray*}
\frac{\partial \beta_{\Tau\rhd\Sigma_{\epsilon}}}{\partial\epsilon}&=&
\frac{\partial}{\partial\epsilon}\psi_1^*\beta_{\Sigma_{\epsilon}}-\psi_1^*\int_0^1
\frac{\partial}{\partial\epsilon}\left(\phi_{\epsilon,s}^*
i_{\frac{\partial\phi_{\epsilon,s}}
{\partial s}}(\psi_1^{-1})^*d\beta_{\Tau}\right)ds  \\
&=& \psi_1^*d\check{\beta}(u_{\epsilon})-\psi_1^*\int_0^1
\frac{\partial}{\partial\epsilon}\left(\phi_{\epsilon,s}^*
i_{\frac{\partial\phi_{\epsilon,s}}
{\partial s}}(\psi_1^{-1})^*d\beta_{\Tau}\right)ds.
\end{eqnarray*}
The left hand side of Formula (\ref{congruence_condition}) becomes
\begin{eqnarray*}
di_{u_{\epsilon}}\check{\beta}_{\Tau\rhd\Sigma}&=&d\int_0^1\Big\langle
\eta_s^{\Tau\rhd\Sigma_{\epsilon}}\,\Big|\,
\frac{\partial\phi_s^{\Tau\rhd\Sigma_{\epsilon}}}{\partial\epsilon}\Big\rangle\,ds \\
&=&d\int_0^1\Big\langle \psi_1^*\eta_s^{\Sigma_{\epsilon}}-\psi_1^*
i_{\frac{\partial\phi_s^{\Sigma_{\epsilon}}}{\partial s}}(\psi_1^{-1})^*d\beta_{\Tau}\,\Big|\,
\frac{\partial(\psi_1^{-1}\phi_s^{\Sigma_{\epsilon}}\psi_1)}{\partial\epsilon}
\Big\rangle\,ds \\
&=&d\int_0^1\Big\langle \psi_1^*\eta_s^{\Sigma_{\epsilon}}-\psi_1^*
i_{\frac{\partial\phi_s^{\Sigma_{\epsilon}}}{\partial s}}(\psi_1^{-1})^*d\beta_{\Tau}\,\Big|\,
(T\psi_1^{-1})_*\frac{\partial\phi_s^{\Sigma_{\epsilon}}}{\partial\epsilon}
\Big\rangle\,ds \\
&=&\psi_1^*d\int_0^1\Big\langle\eta_s^{\Sigma_{\epsilon}}\,\Big|\,
\frac{\partial}{\partial\epsilon}
\phi_s^{\Sigma_{\epsilon}}\Big\rangle\,ds-\psi_1^*d\int_0^1
i_{\frac{\partial\phi_s^{\Sigma_{\epsilon}}}
{\partial\epsilon}}i_{\frac{\partial\phi_s^{\Sigma_{\epsilon}}}{\partial s}}(\psi_1^{-1})^*
d\beta_{\Tau}\,ds \\
&=&\psi_1^*d\check{\beta}(u_{\epsilon})-\psi_1^*d\int_0^1
i_{\frac{\partial\phi_s^{\Sigma_{\epsilon}}}
{\partial\epsilon}}i_{\frac{\partial\phi_s^{\Sigma_{\epsilon}}}{\partial s}}(\psi_1^{-1})^*
d\beta_{\Tau}\,ds 
\end{eqnarray*}
In order to show Formula (\ref{congruence_condition}), it therefore
remains to show
\begin{equation}     \label{rest_to_show}
\int_0^1\frac{\partial}{\partial\epsilon}\left(\phi_{\epsilon,s}^*
i_{\frac{\partial\phi_{\epsilon,s}}
  {\partial s}}(\psi_1^{-1})^*d\beta_{\Tau}\right)ds=d\int_0^1
i_{\frac{\partial\phi_s^{\Sigma_{\epsilon}}}
{\partial\epsilon}}i_{\frac{\partial\phi_s^{\Sigma_{\epsilon}}}{\partial s}}(\psi_1^{-1})^*
d\beta_{\Tau}\,ds.
\end{equation}
We will show this by restricting to a path $\gamma(t)$ in $T^*M$.
Without loss of generality, we can assume the $2$-form
$(\psi_1^{-1})^*d\beta_{\Tau}$ to be $df\wedge dg$.
Observing that
the de Rahm differential $d$ becomes on the path $\gamma(t)$ the
derivative $\frac{\partial }{\partial t}|_{t=0}$, we obtain for the right
hand side of
Formula (\ref{rest_to_show})
$$\int_0^1\frac{\partial }{\partial t}\Big|_{t=0}\left(
\frac{\partial}{\partial\epsilon}
f(\phi_s^{\Sigma_{\epsilon}}(\gamma(t)))\frac{\partial}{\partial s}
g(\phi_s^{\Sigma_{\epsilon}}(\gamma(t))) - {\rm same}\,\,\,{\rm with}\,\,\,
(f\leftrightarrow g)
\right)\,ds.$$
In the same vein, we obtain for the left hand side
$$\int_0^1ds\,\frac{\partial}{\partial\epsilon}\left(\frac{\partial}{\partial s}
g(\phi_s^{\Sigma_{\epsilon}}(\gamma(t)))\frac{\partial }{\partial t}\Big|_{t=0}
f(\phi_s^{\Sigma_{\epsilon}}(\gamma(t)))\right)- {\rm same}\,\,\,{\rm with}\,\,\,
(f\leftrightarrow g).$$
It is clear that integration par parts transforms one side of (the
restriction to $\gamma(t)$ of) Formula (\ref{rest_to_show}) into
the other. The boundary terms of the integration by parts involve derivatives
with respect
to $\epsilon$ for $s=0$ or $s=1$ of expressions in $\phi_s^{\Sigma_{\epsilon}}$.
But 
$\phi_0^{\Sigma_{\epsilon}}=\id_M$ and $\phi_1^{\Sigma_{\epsilon}}=\phi_1$ is a fixed
diffeomorphism,
both being independent of $\epsilon$. Therefore the derivatives, and as a
consequence the
boundary terms, are zero.

As the two sides of Formula (\ref{rest_to_show}) are thus equal when
restricted to an arbitrary path $\gamma(t)$, they must be equal. 
\end{proof}

We now show the following differential
geometric lemma concerning the manifold structure of the quotient with respect
to the congruence relation:

\begin{lem}  \label{lemma_split_surjective}
The map
$$T_{\Sigma}{\mathcal C}^{\infty}(M,Y)\supset T_{\Sigma}{\rm Bis}(Y)\to
\Omega^1(M),\,\,\,\delta\Sigma\mapsto
T\beta(\delta\Sigma)-i_{\epsilon}^*d\langle \lambda,\delta\Sigma\rangle$$
is split surjective. 
\end{lem}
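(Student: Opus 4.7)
The plan is to construct an explicit linear right inverse. A tangent vector $\delta\Sigma \in T_\Sigma \Bis(Y)$ at $\Sigma = (\phi, \eta)$ is encoded by a pair $(\delta\phi, \delta\eta)$, where $\delta\phi_s(m) \in T_{\phi_s(m)} M$ is the variation of the base map (satisfying $\delta\phi_0 = 0$) and $\delta\eta_{s,m} \in T^*_{\phi_s(m)} M$ is the vertical variation of the cotangent section. I would first restrict attention to variations with $\delta\phi = 0$. In that case the pointwise formula for $\lambda$ gives
$$\langle \lambda, \delta\Sigma\rangle(m) \;=\; \int_0^1 \eta_{s,m}(\delta\phi_s(m))\,ds \;=\; 0,$$
so $d\langle\lambda, \delta\Sigma\rangle = 0$, while $T\beta(\delta\Sigma)$ reduces, since $\phi$ is held fixed, to $\int_0^1 \phi_s^* \delta\eta_s\,ds$. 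Splitting the map thus amounts to producing, for each $\alpha \in \Omega^1(M)$, a vertical variation $\delta\eta$ depending linearly and smoothly on $\alpha$ such that $\int_0^1 \phi_s^* \delta\eta_s\,ds = \alpha$.

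The key observation is that $\phi_0 = \id_M$, so $T_m \phi_0 = \id_{T_m M}$; by compactness of $M$ and smoothness of $\phi$, there exists $\varepsilon > 0$ such that $T_m \phi_s$ is a linear isomorphism for every $(m, s) \in M \times [0, \varepsilon]$. I would fix a smooth bump function $\rho \colon [0,1] \to \mathbb{R}_{\ge 0}$ supported in $[0, \varepsilon]$ with $\int_0^1 \rho(s)\,ds = 1$, and define
$$\sigma(\alpha)_{s, m} \;:=\; \rho(s)\, \alpha_m \circ (T_m \phi_s)^{-1} \;\in\; T^*_{\phi_s(m)} M \quad\text{for } s \in [0, \varepsilon],$$
extended by $0$ for $s > \varepsilon$. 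By construction $\phi_s^* \sigma(\alpha)_s = \rho(s)\,\alpha$ on $[0, \varepsilon]$, so $\int_0^1 \phi_s^* \sigma(\alpha)_s\, ds = \alpha$. The assignment $\alpha \mapsto (0, \sigma(\alpha))$ is manifestly $\mathbb{R}$-linear and smooth in $\alpha$, and it is a genuine tangent vector to $\Bis(Y)$ at $\Sigma$ since $\delta\phi = 0$ trivially satisfies $\delta\phi_0 = 0$ and does not disturb the open diffeomorphism condition at $s = 1$. This furnishes the required split.

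The main technical obstacle is that $T_m \phi_s$ is not invertible for all $s \in [0, 1]$ in general: the bisection condition only forces $\phi_0$ and $\phi_1$ to be diffeomorphisms, and $\phi_s$ may fail to be even immersive at intermediate times, so one cannot attempt to solve $\phi_s^* \delta\eta_s = \alpha$ pointwise in $s$. Concentrating the variation near $s = 0$ via the bump $\rho$ circumvents this, exploiting that invertibility is guaranteed there by $\phi_0 = \id_M$ and persists on an open neighborhood of $s = 0$ in $M \times [0,1]$. The remaining checks — smoothness of $(s, m) \mapsto (T_m\phi_s)^{-1}$ on the open stratum of isomorphisms, and the linearity and continuity of $\sigma$ in the Fr\'echet topology — are automatic from the explicit formula.
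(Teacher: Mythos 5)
Your proposal is correct and is essentially the paper's own argument: both restrict to purely vertical variations $\delta\phi=0$ (killing the $\langle\lambda,\delta\Sigma\rangle$ term), both exploit that $T_m\phi_s$ is invertible near $s=0$ since $\phi_0=\id_M$, and both define the splitting by $\delta\eta_s=\chi(s)\,\alpha_m\circ(T_m\phi_s)^{-1}$ for a cut-off $\chi$ supported near $s=0$ with unit integral. The only cosmetic difference is that the paper's cut-off is allowed to depend on $(m,t)$ over a neighborhood $V$ of $M\times\{0\}$, whereas you take a uniform $\varepsilon$ by compactness of $M$.
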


\begin{proof}  
A section to the map $T_{\Sigma}{\rm Bis}(Y)\to\Omega^1(M)$ is
constructed as follows.
Let $\Sigma=(\phi_s,\eta_s)\in\Bis(Y)$ be given. There exists a neighborhood
$V$ of $M\times\{0\}$ in $M\times[0,1]$ such that for all $(m,t)\in V$,
$m\mapsto\phi_t(m)$ is a diffeomorphism, because for $t=0$, we have
$\phi_0=\id_M$. 
Furthermore, there exists a real valued smooth
function $\chi$ such that $\chi\equiv 0$ on the complement $V^c$ of $V$ in
$M\times[0,1]$ and $\int_0^1\chi(m,t)dt=1$ for each $m$ which arises as
projection of an element of $V$ onto its first component.

Now construct the section
$$\omega:\Omega^1(M)\to T_{\Sigma}{\rm Bis}(Y),\,\,\,\alpha\mapsto
\omega(\alpha)$$
as $\omega(\alpha)=\delta\eta_s$ where for a tangent vector
$\delta m\in T_{\phi_s(m)}M$ we put
$$\delta\eta_s(\delta m):=\chi(m,t)\,\,\langle \alpha_m,T\phi_s^{-1}(\delta m)
\rangle.$$
Up to a matching of attachment points using $T\phi_s^{-1}$, the section may be
expressed as being the $1$-form $\alpha$, seen as a tangent vector to
${\rm Bis}(Y)\subset{\mathcal C}^{\infty}(M,Y)$ which is constant on $[0,1]$ and
has zero component in the $T^*M$-direction, multiplied with the
cut-off function $\chi$. Observe that this implies that
$i_{\epsilon}^*d\langle \lambda,\delta\eta_s\rangle=0$. 

We therefore have
$$T\beta(\delta\eta_s)(\delta m)=\int_0^1\phi_s^*\delta\eta_s(\delta m)ds
=\int_0^1\chi(m,s)\,\,\langle \alpha_m,\delta m\rangle\,\,ds =
\langle \alpha,\delta m\rangle,$$
showing that $\omega$ is a section to the above map 
$T\beta-i_{\epsilon}^*d\langle \lambda,-\rangle$.
\end{proof}

\begin{cor}  \label{corollary_Frechet_foliation}
The foliation given by the congruence relation is a regular Fr\'echet foliation.
\end{cor}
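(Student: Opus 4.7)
The plan is to read off the tangent distribution of the would-be foliation from Lemma~\ref{new_expression_congruence_relation}(c) and then combine the split surjectivity in Lemma~\ref{lemma_split_surjective} with the standard recipe for building a regular foliation from a smooth sub-bundle equipped with a smooth complement.

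First, at each $\Sigma\in\Bis(Y)$ I set
$$\mathcal{D}_\Sigma \,:=\, \ker F_\Sigma, \qquad F_\Sigma\colon T_\Sigma\Bis(Y) \to \Omega^1(M), \quad \delta\Sigma \mapsto T\beta(\delta\Sigma)-i_\epsilon^*d\langle\lambda,\delta\Sigma\rangle.$$
Lemma~\ref{new_expression_congruence_relation}(c) says exactly that $\Sigma\sim\Sigma'$ if and only if they are joined by a smooth path $\{\Sigma_t\}$ with $\partial\Sigma_t/\partial t \in \mathcal{D}_{\Sigma_t}$ for all $t$. Hence the candidate leaves (the $\sim$-classes) are by construction the maximal integral submanifolds of $\mathcal{D}$.

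Second, I would show that $\mathcal{D}$ is a smooth Fréchet sub-bundle with a smooth transversal complement. The explicit section $\omega_\Sigma\colon\Omega^1(M)\to T_\Sigma\Bis(Y)$ built in the proof of Lemma~\ref{lemma_split_surjective} can be chosen to depend smoothly on $\Sigma$, by picking the auxiliary tube $V$ and the cut-off $\chi$ in a parameterized fashion on a local chart of $\Bis(Y)$. This furnishes a smooth splitting $T\Bis(Y)=\mathcal{D}\oplus\mathcal{T}$ with the complement $\mathcal{T}$ trivialized by $\Omega^1(M)$. To build a foliation chart at $\Sigma$, I then flow, for small time, the horizontal vector fields $\Sigma'\mapsto\omega_{\Sigma'}(\alpha)$ for $\alpha$ running over a neighborhood of $0\in\Omega^1(M)$; these sweep out a Fréchet-smooth transversal slice, and the local decomposition into slice times leaf is the required foliated structure.

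The main obstacle in this type of argument is the usual Fréchet-category pitfall: smoothness and involutivity of a sub-bundle do not by themselves imply Frobenius integrability. Here the difficulty is sidestepped, because integrability is not something to prove but rather built into the very definition of $\sim$: the leaves are by fiat the equivalence classes, and transitivity of $\sim$ (needed to concatenate witnessing paths into a single one) is guaranteed by the reparameterization convention stated just before Lemma~\ref{new_expression_congruence_relation}. What remains is the \emph{regularity} of the resulting partition, which is precisely what the smooth splitting of $T\Bis(Y)$ produced by Lemma~\ref{lemma_split_surjective} encodes, in the form of a uniform transversal model $\Omega^1(M)$ at each point.
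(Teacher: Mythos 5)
Your proposal is correct and follows essentially the same route as the paper, which states the corollary as an immediate consequence of Lemma \ref{lemma_split_surjective}: the distribution is the kernel of a split surjective bundle map with uniform transversal model $\Omega^1(M)$, and the leaves are the equivalence classes by Lemma \ref{new_expression_congruence_relation}(c). Your additional detail on building foliation charts by flowing the explicit (vertical, hence integrable by hand) transversal fields goes beyond what the paper records but is consistent with its intent.
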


%\begin{proof}
%Our proof is based on Theorem III.11 in \cite{NeeWag}.
%Unfortunately, we do not
%have in our setting a smooth map $F:M\to N$ from a locally convex manifold $M$
%to a Banach manifold $N$, but our smooth map
%$$F=T\beta-i_{\epsilon}^*d\langle\lambda,-\rangle:T{\rm Bis}(Y)\to\Omega^1(M)$$
%takes values in the Fr\'echet
%space/manifold $\Omega^1(M)$. One can remedy this by replacing the smooth
%$1$-forms $\Omega^1(M)$ by the space $\Omega^1_k(M)$ of all $k$-times
%differentiable $1$-forms for a fixed finite $k>2$. Observe that the inclusion
%map $i:\Omega^1(M)\subset\Omega^1_k(M)$ is smooth, thus the map $i\circ F$
%is smooth as well. Observe furthermore that the construction of a section
%of $F$ gives also a section of $i\circ F$ - in fact, the order of
%differentiability of $\alpha$ does not play a role.  

%Now Theorem III.11 is applicable and shows
%that at the regular value $0\in T{\rm Bis}(Y)$,
%the subset $(i\circ F)^{-1}(0)$ is a split Fr\'echet-submanifold of the
%Fr\'echet space $T{\rm Bis}(Y)$. As it is also a Fr\'echet subspace, it is a split Fr\'echet 
%subspace of $T{\rm Bis}(Y)$.
%\end{proof}  

\begin{lem}
In the point $(\id,0)\in{\rm Bis}(Y)$, the infinitesimal relation in
$T_{(\id,0)}{\rm Bis}(M)$ corresponding to the congruence relation
(\ref{congruence_relation}) is 
$$(X,\alpha)\in T_{(\id,0)}{\mathcal F}:\Leftrightarrow X_1=0\,\,\,{\rm and}\,\,\,
\int_0^1\alpha_s\,ds=0,$$
where ${\mathcal F}$ is the leaf of the foliation corresponding to the
congruence relation. 
\end{lem}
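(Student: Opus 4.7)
The plan is to linearize the two conditions defining the congruence relation~(\ref{congruence_relation}) at the base point $(\id,0)$ and to invoke the regular foliation structure established in Corollary~\ref{corollary_Frechet_foliation}. Because that foliation is regular, $T_{(\id,0)}\mathcal{F}$ coincides with the kernel at $(\id,0)$ of the system cutting out the tangent distribution, namely the combination of $\phi_1$-constancy with the differential condition $i_\epsilon^*i_u\check{\omega}=0$.

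I take a smooth family $\Sigma_\epsilon=(\phi_s^\epsilon,\eta_s^\epsilon)$ with $\Sigma_0=(\id,0)$ (so that $\phi_s^0=\id_M$ and $\eta_s^0=0$) and $\partial_\epsilon|_0\Sigma_\epsilon=(X,\alpha)$, where $X_s=\partial_\epsilon|_0\phi_s^\epsilon$ and $\alpha_s=\partial_\epsilon|_0\eta_s^\epsilon$. The requirement $\phi_1^{\Sigma_\epsilon}=\id_M$ for all $\epsilon$, differentiated at $\epsilon=0$, gives $X_1=0$. The remaining condition $i_\epsilon^*i_u\check{\omega}=0$ I rewrite, via Lemma~\ref{new_expression_congruence_relation}(a), as $i_\epsilon^*d\check{\beta}(u)=\partial_\epsilon\beta_{\Sigma_\epsilon}$, and evaluate at $\epsilon=0$. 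The right hand side becomes $\int_0^1\alpha_s\,ds$ by the Leibniz rule, since $(\phi_s^0)^*=\id^*$ while the pullback-derivative term is killed by the factor $\eta_s^0=0$. The left hand side vanishes: the scalar function $\check{\beta}(u)(\epsilon,m)=\int_0^1\eta_s^\epsilon(m)(\partial_\epsilon\phi_s^\epsilon(m))\,ds$ restricts to $0$ on $\{0\}\times M$ (again because $\eta_s^0=0$), so its de~Rham differential on $M$ is zero. Hence the differential condition at $(\id,0)$ reduces to $\int_0^1\alpha_s\,ds=0$.

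Putting the two together, the tangent distribution to $\mathcal{F}$ at $(\id,0)$ is contained in the set $\{(X,\alpha)\in T_{(\id,0)}{\rm Bis}(Y):X_1=0,\ \int_0^1\alpha_s\,ds=0\}$. The opposite inclusion, i.e.\ the integrability of every such $(X,\alpha)$ to a curve inside the leaf, follows from the regularity statement of Corollary~\ref{corollary_Frechet_foliation} together with the splitting in Lemma~\ref{lemma_split_surjective}, which together identify the kernel of the Pfaffian system with the tangent space to the leaf.

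The step requiring the most care is the evaluation of $i_0^*d\check{\beta}(u)$: one must identify the correct scalar function on $[0,1]\times M$ whose restriction to $\{0\}\times M$ is being $d$-differentiated on $M$, and then exploit the particular feature of the base point $(\id,0)$, namely $\eta_s^0\equiv 0$, to force that restriction to vanish identically.
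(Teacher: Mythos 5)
Your proof is correct and follows essentially the same route as the paper's: both linearize the congruence relation at $(\id,0)$, observe that the correction term vanishes there because $\eta^0_s\equiv 0$ (equivalently $\lambda_{(\id,0)}=0$), and identify $T\beta_{(\id,0)}(X,\alpha)=\int_0^1\alpha_s\,ds$. The only cosmetic difference is that you work with formulation (a) of Lemma \ref{new_expression_congruence_relation} where the paper invokes the equivalent formulation (c), and you make explicit the reverse inclusion via Lemma \ref{lemma_split_surjective}, which the paper leaves implicit.
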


\begin{proof}
%We apply the same procedure as in Lemma \ref{infinitesimal_bracket}, i.e. we
%introduce a new parameter $s$ such that the paths in
%${\mathcal C}^{\infty}(M,M)$ are constant in $\id_M$ for $s=0$ with tangent
%vectors $\frac{\partial}{\partial s}|_{s=0}\Sigma_{0}^s=(X,\alpha)$ and 
%$\frac{\partial}{\partial s}|_{s=0}\Sigma_1^s=(X',\alpha')$. 
%The infinitesimal congruence relation then reads obviously
%\begin{eqnarray*}  
%(X,\alpha)\sim(X',\alpha'):\Leftrightarrow X_1=X_1'\,\,\,{\rm and}   \\ 
%\exists\,\{\Sigma_t\}_{t\in[0,1]}\,\,\,{\rm with}\,\,\,
%\frac{\partial}{\partial s}\Big|_{s=0}\Sigma_{0}^s=(X,\alpha),\,\,\,
%\frac{\partial}{\partial s}\Big|_{s=0}\Sigma_{1}^s=(X',\alpha')\,\,\,{\rm and}
%\,\,\,i^*_{\epsilon}i_{u}\check{\omega}=0.
%\end{eqnarray*}

As a first condition, we get clearly $X_1=0$ from $\phi_1=\id_M$.   

Using the expression of the map in Lemma \ref{new_expression_congruence_relation} (c),
the congruence relation reads in 
$\Sigma=(\id,0)$ as $T\beta_{(\id,0)}=0$,
because $\lambda_{(\id,0)}=0$.

Hence the tangent map $T\beta_{(\id,0)}$ of
$\beta:{\mathcal C}^{\infty}(M,Y)\to\Omega^1(M)$,
$\Sigma\mapsto\beta_{\Sigma}$ has
a simple description, namely $T\beta_{(\id,0)}(X,\alpha)=\int_0^1\alpha_s\,ds$.
Thus we obtain as a second condition $\int_0^1\alpha_s\,ds=0$. 
\end{proof}

In conclusion, we have shown the second main theorem of this article:

\begin{theo}   \label{second_main_theorem}
The set of equivalence classes
$\overline{{\rm Bis}(Y)}:={\rm Bis}(Y)\,/\,\sim$ of bisections of $Y$ with
respect to the equivalence relation (\ref{congruence_relation}) becomes a
rack with the rack product induced from the rack product of bisections.

The set of equivalence classes is a quotient of a Fr\'echet manifold by a
regular Fr\'echet foliation whose associated distribution of tangent spaces
gives the correct infinitesimal relation in $(\id,0)$, i.e. the quotient
Leibniz algebra is isomorphic to the Leibniz algebra of sections
$\Gamma(\T M)$ of the standard Courant algebroid.
\end{theo}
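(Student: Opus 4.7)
My plan is to assemble the theorem from the lemmas proved in the preceding sections rather than do any new computation. The statement really has three pieces to verify — (i) the rack structure descends to the quotient, (ii) the quotient is a leaf space of a regular Fr\'echet foliation, (iii) at the base point $(\id,0)$ the ``distribution complement'' is the Leibniz ideal $I$, so that Proposition \ref{quotient_proposition} delivers the standard Courant algebroid — and I would handle them in that order.

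For (i), I would invoke Lemma \ref{lemma_congruence_relation}: since $\sim$ is a congruence relation for $\rhd$ on the right and (trivially, by the equality $\Sigma\rhd \Tau=\Sigma'\rhd \Tau$ noted at the beginning of that proof) on the left, the formula $[\Sigma]\rhd[\Tau]:=[\Sigma\rhd\Tau]$ defines an unambiguous binary operation on $\mathrm{Bis}(Y)/\sim$. The self-distributivity axiom then descends directly from its validity on $\mathrm{Bis}(Y)$ established in Theorem \ref{main_theorem}, and the bijectivity of each $[\Sigma]\rhd(-)$ lifts because $\Sigma\rhd(-)$ is a bijection of $\mathrm{Bis}(Y)$ sending $\sim$-classes to $\sim$-classes.

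For (ii) and (iii), Corollary \ref{corollary_Frechet_foliation} already states that $\sim$ is the leaf relation of a regular Fr\'echet foliation $\mathcal F$, so $\mathrm{Bis}(Y)/\sim$ is exactly a leaf space of such a foliation — no further geometric argument is required. The tangent Leibniz algebra at $[(\id,0)]$ is by general principles $T_{(\id,0)}\mathrm{Bis}(Y)/T_{(\id,0)}\mathcal F$; the former is (identified with the space of sections of) the Leibniz algebroid $A$ of Section 4, while Lemma 5.6 (stated immediately before the theorem) says that the latter is the space (of sections of) the subbundle $I$ from Lemma \ref{I_ideal}. Plugging this into Proposition \ref{quotient_proposition} — which provides the Leibniz-algebroid isomorphism $A/I\xrightarrow{\cong}\T M$ — delivers the stated identification with $\Gamma(\T M)$.

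The only genuine content, and so the one step I expect to require a careful sentence or two rather than a reference, is checking that the bracket on the infinitesimal quotient $T_{(\id,0)}\mathrm{Bis}(Y)/T_{(\id,0)}\mathcal F$ inherited from the rack-to-Leibniz tangent construction of \cite{LauWag} actually agrees with the Leibniz bracket on $A/I$ obtained in Lemma \ref{infinitesimal_bracket} and Lemma \ref{I_ideal}. This reduces to two compatibilities that are essentially tautological given what is proved earlier: the derivative of the quotient rack product equals the quotient of the derivative of the rack product (because the quotient map $\mathrm{Bis}(Y)\to\mathrm{Bis}(Y)/\sim$ is a submersion of Fr\'echet spaces along $\mathcal F$), and $I$ is a Leibniz ideal in the bracket of Lemma \ref{infinitesimal_bracket} by Lemma \ref{I_ideal}. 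So the only real ``obstacle'' is bookkeeping: writing down precisely why the rack-to-Leibniz functor commutes with the passage to a leaf space of a regular foliation. Once that is pinned down, the theorem follows by concatenating the cited lemmas.
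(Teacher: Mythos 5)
Your proposal is correct and matches the paper's approach exactly: the paper offers no separate proof of this theorem, presenting it as the conclusion of the preceding results (Lemma \ref{lemma_congruence_relation} for the descent of the rack product, Lemma \ref{lemma_split_surjective} and Corollary \ref{corollary_Frechet_foliation} for the regular Fr\'echet foliation, the lemma identifying the tangent space to the leaf at $(\id,0)$ with $I$, and Proposition \ref{quotient_proposition} for $A/I\cong\T M$). If anything, you are slightly more careful than the paper, since you flag the compatibility of the rack-to-Leibniz tangent construction with the passage to the leaf space, a point the paper leaves implicit.
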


\begin{rem}
Unfortunately, the quotient $\overline{{\rm Bis}(Y)}$ is not a Lie rack. We
were not even able to show that it is a local Lie rack. The rack operation is
clearly smooth, but the foliation may be ill-behaved off $(\id,0)$.
\end{rem}

%It remains to discuss the congruence relation induced on $Y$ by the
%congruence relation on the level of bisections, i.e. on $\Bis(Y)$.

%\begin{defi}
%Define the induced congruence relation on $Y$ in the following way.
%Two elements
%$y,y'\in Y$ are in relation, i.e. $y\sim y'$, if there exists a bisection
%$\Sigma$ such that $y'=\Sigma\rhd y$.
%\end{defi}

%It turns out that the induced relation on $Y$ is trivial, i.e. does not
%identify anything:

%\begin{lem}  \label{equivalence_relation_trivial}
%The equivalence relation on $Y$ induced by the above equivalence relation on
%$\Bis(Y)$ is trivial, i.e. $y\sim y'$ if and only if $y=y'$.
%\end{lem}

%\begin{proof}
%The proof For a bivector $\pi$ 

%Now two elements
%$y,y'\in Y$ are in relation if there exists a bisection
%$\Sigma$ such that $y'=\Sigma\rhd y$. The property
%$\Sigma\rhd\Tau=\Sigma'\rhd\Tau$ implies that
%$\Sigma\rhd\Sigma_y=\Sigma'\rhd\Sigma_y$ for the bisection (as a set)
%corresponding to the above bisection $\sigma_y$ (as a map). This implies
%that $\Sigma\rhd y=\Sigma'\rhd y$ for all bisections $\Sigma,\Sigma'$, i.e.
%the orbit of $y$ consists only of $y$. 
%\end{proof}

\subsection{Integration of Dirac structures inside $\T M$}

In this section, we will identify the Lie groupoids corresponding to
integrable Dirac
structures in $Y$. We believe once again that this works for a general
integrable Dirac
structure $D$, but for the moment, we can show it only for integrable
Dirac structures
coming from Poisson structures $\pi$ on $M$. 

Let $D$ be an integrable Dirac structure. 
We will consider $Y_D$, given by
$$Y_D=\{(\gamma_t,\alpha_t)\in Y\,|\,(\dot{\gamma},\alpha)\in D_{\gamma(t)}\,\,\,
\forall t\in[0,1]\}.$$
The goal is to pass from an equivalence
relation on ${\rm Bis}(Y_D)$ to an interesting equivalence relation on $Y_D$
itself such that the quotient $Y_D/\sim$ of $Y_D$ identifies with the (smooth)
Weinstein groupoid associated to the integrable Dirac structure $D$. 

The main idea is to perform symplectic reduction: Given a symplectic category
$X$ with source and target maps to the manifold $M$ and symplectic form
$\omega\in\Omega^2(X)$, consider a coisotropic subcategory $Y\subset X$.
The distribution of subspaces of the tangent spaces given by $\ker(\omega)$
defines an integrable regular distribution on $Y$ thanks to the fact that $Y$
is coisotropic. Thus the quotient $Y/\sim$ carries an induced non-degenerate
closed 
$2$-form, but the quotient is not a manifold in general (but rather some
orbifold). In our case, an identification with the Weinstein groupoid which
has been proven to be a manifold for an integrable Lie algebroid by
Crainic-Fernandes \cite{CraFer} will let us conclude that $Y_D\,/\,\sim$ is
a manifold in the integrable case.   

In order to proceed (we will take $X:=Y$, the manifold of
cotangent paths, and $Y:=Y_D$ the subset of cotangent paths $(\gamma_t,\eta_t)$
with $(\dot{\gamma}_t,\eta_t)\in D_{\gamma(t)}$ for all $t\in[0,1]$), we have to
show that our $Y_D$ associated to a Dirac structure $D$ is coisotropic with
respect to the symplectic structure $\omega$ on $Y$ which is just the
symplectic form on $T^*M$ taken pointwise on the path.  

\begin{lem}
For any Dirac structure $D$, $Y_D$ is a cosiotropic subset of $Y$. 
\end{lem}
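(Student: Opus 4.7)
The plan is to verify the coisotropic condition $(T_y Y_D)^{\omega} \subseteq T_y Y_D$ at each $y = (\gamma, \alpha) \in Y_D$ by a direct pointwise calculation. Since $\omega$ on $Y$ is obtained by pointwise integration of the canonical symplectic form on $T^*M$, everything reduces to pointwise-in-$t$ linear algebra on $T_{\gamma(t)} M \oplus T^*_{\gamma(t)} M$, coupled with an integration over $[0,1]$ and the standard calculus of variations. The crucial algebraic input is the maximal isotropy of $D$ with respect to the symmetric Courant pairing.

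The first step is to describe $T_y Y_D$ concretely. A tangent vector at $y$ is a pair $v = (\delta\gamma, \delta\alpha)$ with $\delta\gamma$ a vector field along $\gamma$ and $\delta\alpha$ a $1$-form along $\gamma$ (using a local trivialization of $D$ near $\gamma([0,1])$, possible by compactness, together with a linear connection on $TM \oplus T^*M$). Differentiating the defining constraint $(\dot\gamma_s(t), \alpha_s(t)) \in D_{\gamma_s(t)}$ at $s=0$ yields a pointwise tangency condition: at each $t$, the combination $(\partial_t \delta\gamma(t), \delta\alpha(t))$ belongs to $D_{\gamma(t)}$ modulo a correction linear in $\delta\gamma(t)$ that encodes the variation of $D$ along $M$.

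Second, in the local trivialization the symplectic pairing reads
\begin{equation*}
\omega_y(v, w) = \int_0^1 \bigl[\, \delta\alpha_t(X_t) - \xi_t(\delta\gamma_t)\, \bigr]\, dt
\end{equation*}
for a candidate $w = (X, \xi) \in (T_y Y_D)^{\omega}$. To extract pointwise information on $w$, I will test against a rich family of $v \in T_y Y_D$ compactly supported in $(0,1)$, constructed by prescribing at each $t$ an arbitrary element of $D_{\gamma(t)}$ for $(\partial_t \delta\gamma, \delta\alpha)$ and integrating the first component to recover $\delta\gamma$. Integration by parts on $\int_0^1 \xi_t(\delta\gamma_t)\,dt$ then pushes a $t$-derivative onto $\xi$, converting the antisymmetric integrated pairing into the pointwise symmetric Courant pairing between $(\partial_t X_t, \xi_t)$ and the freely prescribed element of $D_{\gamma(t)}$, with the boundary contributions at $t = 0, 1$ vanishing thanks to compact support.

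Third, the vanishing of this pairing for arbitrary elements of $D_{\gamma(t)}$, combined with maximal isotropy of $D$ (giving $D_{\gamma(t)}^{\perp} = D_{\gamma(t)}$), forces $(\partial_t X_t, \xi_t) \in D_{\gamma(t)}$ for all $t$; in view of Step~1 this is precisely the tangency of $w$ to $Y_D$. The main obstacle will be the careful bookkeeping of the connection-related and base-variation correction terms that appear both in the tangency condition of Step~1 and in the integration by parts: these must be matched up consistently, which I expect to follow from the compatibility between the canonical $1$-form $\lambda$ on $T^*M$ and the symmetric Courant pairing, and possibly from the closure of $\Gamma(D)$ under the Dorfman bracket, which controls the derivatives of local sections of $D$ arising when trivializing the constraint in a local frame.
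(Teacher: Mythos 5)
Your overall strategy---computing $(T_yY_D)^{\omega}$ directly and using the algebra of the fibrewise Courant pairing---is a legitimate route, and it is genuinely different from the paper's argument (the paper fixes an algebroid leaf $S_0$ of $D$, observes that paths valued in a fixed leaf form an isotropic subset because $(\dot\gamma_t,\eta_t)\in D_{\gamma(t)}$ pairs to zero, and then argues that coisotropy of $Y_D$ is exactly leafwise isotropy of these path spaces). Your approach is essentially the Cattaneo--Felder / Dirac-sigma-model computation, which in principle is more self-contained since it does not presuppose the leaf decomposition.

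However, as written the proposal has a genuine gap, and it sits exactly where you park the difficulty as ``bookkeeping.'' Your Steps 2--3 use only the maximal isotropy of $D$: you test against variations with $(\partial_t\delta\gamma,\delta\alpha)$ freely prescribed in $D_{\gamma(t)}$ and conclude $(\partial_tX_t,\xi_t)\in D^{\perp}=D$. But such test variations are \emph{not} tangent to $Y_D$ unless the correction terms from the variation of $D$ along $M$ are incorporated, and likewise $(\partial_tX_t,\xi_t)\in D$ is not the tangency condition for $w$ without those same corrections. If the argument went through as described, it would prove that $Y_D$ is coisotropic for \emph{any} maximally isotropic subbundle, which is false: the paper itself records (after Corollary \ref{corollary_Y_D_quotient_rackoid}, citing Cattaneo--Felder and Klim\v{c}\'{\i}k--Strobl) that for the graph of a bivector $\pi$ coisotropy of the cotangent-path space is \emph{equivalent} to $\pi$ being Poisson, i.e.\ to involutivity. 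So the closure of $\Gamma(D)$ under the Dorfman bracket is not an optional ingredient that ``possibly'' controls some frame derivatives---it is the heart of the proof, entering precisely through the first-derivative-of-$D$ correction terms (concretely, through the Jacobi identity $\pi^{il}\partial_l\pi^{jk}+\mathrm{cyclic}=0$ in the bivector case). Relatedly, your claim that a single integration by parts converts the integrated antisymmetric pairing $\int_0^1(\delta\alpha_t(X_t)-\xi_t(\delta\gamma_t))\,dt$ into the pointwise symmetric Courant pairing of $(\partial_t\delta\gamma,\delta\alpha)$ with $(\partial_tX,\xi)$ does not come out as stated (the undifferentiated $X$ and $\delta\gamma$ do not both acquire a $t$-derivative this way); the honest version is the first-class-constraint computation, in which the correction terms and the integrability condition cannot be separated from the main line of the argument. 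To repair the proof you would need to carry out that computation explicitly, at which point you would be reproving the cited results rather than giving a short argument.
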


\begin{proof}
The symplectic form on $T^*M$ is $\sum_{i=1}^n dq_i\wedge dp_i$, where the
tangent space to $T^*M$ has the usual fourfold coordinates
$(q_i,p_i,\frac{\partial}{\partial q_i},\frac{\partial}{\partial p_i})$ with
$i=1,\ldots,n$ in case $\dim(M)=n$. The symplectic form is zero on a subset of
$Y$ if for all $t\in[0,1]$, 
there is no ($t$-dependent) tangent vector with vector part having a non-zero
coefficient in some 
$(\frac{\partial}{\partial q_i},\frac{\partial}{\partial p_i})$ for some
$i=1,\ldots,n$. 

Recall that a Dirac structure is a Lagrangian subbundle of $\T M$ which is
involutive, i.e. its sections are closed under the Dorfman bracket.
In particular, local section of $D$ are $v+\alpha\in\T M$ with $\alpha(v)=0$. 

Let us consider the tangent directions to elements $(\gamma_t,\eta_t)\in Y_D$.
For this, let us fix paths $(\gamma_t,\eta_t)\in Y_D$ with values in a fixed
algebroid leaf $S_0$ of the Dirac structure. 
The tangent direction to the $\eta_t$-component is just the direction given
by $\eta_t$ itself as the section $\eta_t$ is linear in the fiber direction.
The constraint $(\dot{\gamma}_t,\eta_t)\in D_{\gamma(t)}$ means thus that the
tangent directions in direction of the path $\gamma(t)$ at $t$ and in
direction of $\eta_t$ pair to zero for all $t\in[0,1]$. In particular, the coefficient in 
$(\frac{\partial}{\partial q_i},\frac{\partial}{\partial p_i})$ is zero for all
$i=1,\ldots,n$. As a consequence, the symplectic form inherited from $T^*M$
must be zero on paths in $Y_D$ with values in the fixed leaf $S_0$. But being
coisotropic means for $Y_D$ exactly that the paths with values in a fixed leaf
are Lagrangian, thus we have shown that $Y_D$ is coisotropic. 
\end{proof}

In order to show that $Y_D$ is a subrackoid, we need two propositions. 

Let $\xymatrix{G_D\ar@<2pt>[r]\ar@<-2pt>[r] & M}$ be a source connected
Lie groupoid
integrating $D$ and let $\omega_D$ be the corresponding $2$-form on $G_D$.
In other words, $\omega_D$ is the symplectic form of the symplectic realization
$\xymatrix{G_D\ar@<2pt>[r]\ar@<-2pt>[r] & M}$ of the Poisson
manifold $(M,\pi)$. 
Given a bisection $\overline{\Sigma}=(\phi_s,\eta_s)$ of
$G_D$, there is a map
$$\overline{\Sigma}:\T M\stackrel{((\phi_1)_*,\phi_1^*)}{\longrightarrow}\T M
\stackrel{\overline{\Sigma}^*\omega_D}{\longrightarrow}\T M$$
defined as the composition of $((\phi_1)_*,\phi_1^*):\T M\to\T M$ pushing
forward (resp. pulling back) tangent (resp. cotangent)
vectors to $M$ via the diffeomorphism $\phi_1$ and then
$\overline{\Sigma}^*\omega_D:\T M\to\T M$ sending $(X,\alpha)$ to
$(X,\alpha-i_X\overline{\Sigma}^*\omega_D)$. The composition of these maps
covers the infinitesimal
automorphisms of the Dorfman bracket described in Proposition 2.3 of
\cite{BCG}.  

\begin{prop}  \label{proposition1}
Let $\xymatrix{G_D\ar@<2pt>[r]\ar@<-2pt>[r]& M}$ be a source connected
Lie groupoid
integrating $D$ with $2$-form $\omega_D$.
Let $\phi_1$ be a diffeomorphism of $M$ and $\overline{\Sigma}$ be a bisection
of $G_D$ over $\phi_1$, i.e. $\overline{\Sigma}=(\phi_s,\eta_s)$ with
$\phi_1$ the given diffeomorphism of $M$. Then the map
$$\overline{\Sigma}:\T M\stackrel{((\phi_1)_*,\phi_1^*)}{\longrightarrow}\T M
\stackrel{\overline{\Sigma}^*\omega_D}{\longrightarrow}\T M$$
sends $D$ to $D$.
\end{prop}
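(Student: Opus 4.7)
The goal is to show that the composite map $\overline{\Sigma}:\T M\to \T M$, which first applies the diffeomorphism-action $((\phi_1)_*,\phi_1^*)$ and then the gauge transformation by the closed $2$-form $B:=\overline{\Sigma}^*\omega_D$, carries $D\subset\T M$ into $D\subset\T M$. The conceptual strategy is to identify $\overline{\Sigma}$ as the image in $\Aut(\T M)=\Omega^2_{\rm cl}(M)\rtimes{\rm Diff}(M)$ of the inner automorphism of $G_D$ given by conjugation with $\overline{\Sigma}$, which by general principles descends to a Lie algebroid automorphism of $\text{Lie}(G_D)=D$.

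The first step is to verify that $B=\overline{\Sigma}^*\omega_D$ is closed; this is immediate from $d\omega_D=0$, so the $B$-field transformation $(X,\alpha)\mapsto(X,\alpha-i_XB)$ is a bona fide Courant automorphism of $\T M$ by the result of \cite{BCG} cited in Remark \ref{remark_automorphisms_standard_Courant_algebroid}. Combined with the diffeomorphism action, the composite $\overline{\Sigma}$ is an automorphism of the standard Courant algebroid $\T M$.

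The main step is the identification of $\overline{\Sigma}$ with the differential of conjugation. Viewing $\overline{\Sigma}$ as a section $M\to G_D$ of the source, let $C_{\overline{\Sigma}}:G_D\to G_D$ be conjugation, i.e.\ $g\mapsto\overline{\Sigma}(t(g))\cdot g\cdot\overline{\Sigma}(s(g))^{-1}$. This is a groupoid automorphism covering $\phi_1$ on the base, so its Lie functor produces a Lie algebroid automorphism of $\text{Lie}(G_D)\cong D$. One then computes how this automorphism, transported through the inclusion $D\hookrightarrow\T M$, acts on a general element $(X,\alpha)\in D_m$: the tangent component is pushed by $(\phi_1)_*$ and the cotangent component is pulled by $\phi_1^*$, up to an additional $1$-form correction obtained by contracting the tangent vector with the pullback of the multiplicative symplectic form against the bisection. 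Using the multiplicativity identity for $\omega_D$ (the three-term relation $pr_1^*\omega_D-{\rm comp}^*\omega_D+pr_2^*\omega_D=0$), this correction reads exactly $-i_{(\phi_1)_*X}\overline{\Sigma}^*\omega_D$, matching the formula for $\overline{\Sigma}$. Hence $\overline{\Sigma}$ is nothing but the induced algebroid automorphism of $D$ read inside $\T M$, and thus sends $D$ to itself.

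The main obstacle is the explicit matching in the last step: while it is conceptually evident that conjugation by a bisection of a symplectic groupoid preserves its Lie algebroid, translating this into the precise formula involving $\overline{\Sigma}^*\omega_D$ requires careful bookkeeping with the source, target and product in $G_D$ and with the multiplicativity of $\omega_D$. Alternatively, one can bypass this by a direct check using that local sections of $D$ are of the form $v+\alpha$ with $\alpha(v)=0$ and that the image $((\phi_1)_*v,\phi_1^*\alpha-i_{(\phi_1)_*v}\overline{\Sigma}^*\omega_D)$ satisfies the same isotropy relation against $(\phi_1)_*v$ precisely when $\overline{\Sigma}^*\omega_D$ arises as the pullback of a multiplicative form; the maximal isotropy of $D$ then forces the image to lie in $D$.
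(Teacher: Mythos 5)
Your main argument is correct in outline but takes a genuinely different route from the paper. The paper argues infinitesimally: it observes that the map $\overline{\Sigma}$ covers an infinitesimal automorphism of the Dorfman bracket which is \emph{inner}, i.e.\ given by the Dorfman bracket with a section of $D$; since $\Gamma(D)$ is closed under the Dorfman bracket, this derivation preserves $D$, and the global statement follows by integration, using source-connectedness and the fact that exponential images cover the identity-component subgroupoid (\cite{Mack}, Prop.\ 3.6.3). You instead argue globally: conjugation $C_{\overline{\Sigma}}$ by the bisection is a groupoid automorphism of $G_D$ covering $\phi_1$, hence induces a Lie algebroid automorphism of ${\rm Lie}(G_D)\cong D$, and you claim, via multiplicativity of $\omega_D$, that this induced automorphism read inside $\T M$ is exactly the stated map. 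This is in fact the point of view the paper adopts later, in the proof of Proposition \ref{proposition_identification_quotient_Weinstein_groupoid}, where the adjoint action of a bisection on $D$ is identified with the transformation of Proposition \ref{proposition1}. Your route avoids the exponentiation step and is more conceptual, but the explicit matching with the multiplicative form --- which you acknowledge as ``the main obstacle'' and do not carry out --- is the entire mathematical content of your proof; as written it is an outline rather than a proof. (To be fair, the paper's version has a symmetric weakness: the assertion that the covered infinitesimal automorphism is inner is likewise stated without computation.)

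Your proposed fallback, however, contains a genuine error. Preservation of the isotropy relation cannot force the image into $D$: for \emph{any} $2$-form $B$ the gauge transformation $(v,\alpha)\mapsto(v,\alpha-i_vB)$ preserves the pairing, $\langle(v,\alpha-i_vB),(v,\alpha-i_vB)\rangle=\alpha(v)-B(v,v)=\alpha(v)$, so every $B$-field transform of an isotropic element is isotropic; yet $B$-field transforms do not preserve a given Dirac structure in general (the graph of a $2$-form $\sigma$ is sent to the graph of $\sigma+B$). A maximally isotropic subbundle does not contain every isotropic element of $\T M$, so ``isotropic $\Rightarrow$ in $D$'' is false. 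To conclude membership in $D$ you genuinely need either the involutivity of $D$ under the Dorfman bracket (the paper's route) or the identification with an automorphism of the algebroid $D$ itself (your main route); the isotropy check alone can never suffice.
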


\begin{proof}
The map $\overline{\Sigma}$ covers an infinitesimal
automorphism of the Dorfman bracket. As $\overline{\Sigma}\in{\rm Bis}(Y_D)$,
this infinitesimal automorphism is inner, i.e. given by the Dorfman bracket
with an element of $D\subset\T M$. As $D$ is closed under the Dorfman bracket,
the infinitesimal automorphism sends $D$ to $D$. Thus the global automorphism
which covers it must also send $D$ to $D$. This last step follows from the fact
that the exponential images in a Lie groupoid cover the identity-component
subgroupoid, see Proposition 3.6.3, p. 134, in \cite{Mack}. 
\end{proof}

Disposing of a Lie groupoid $\xymatrix{G_D\ar@<2pt>[r]\ar@<-2pt>[r]& M}$
integrating $D$ makes it possible to integrate paths in the Lie algebroid
$D\subset T^*M$ into paths in the groupoid $G_D$, i.e. we obtain an
integration map $I:Y_D\to G_D$, obtained by evaluating the path in
$G_D$ in $1\in[0,1]$. This map can also be understood as the quotient
map with respect to the homotopy relation, see \cite{CraFer}. 

We state the next proposition for Dirac structures induced by the graph of a Poisson structure, because we could not locate in the literature the corresponding statement for general Dirac structures.

We are convinced, however, that this extension is true, except that $G_D$ is not obtained by symplectic reduction. Instead, it is obtained by dividing $Y_D$ by the kernel of $\omega$ with the condition that the end points of the paths in $Y_D$ are fixed.
In particular $\omega_D$ may be degenerate.

\begin{prop}  \label{proposition2}
Let $\omega$ be the symplectic form on $Y$ and $\omega_{D}$ be the symplectic
form on $G_D$. We have the following situation:

\vspace{.5cm}
\hspace{4cm}
\xymatrix{Y_D \ar[r]^{i_D} \ar[d]^I & (Y,\omega) \\
  (G_D,\omega_D) & }
\vspace{.5cm} 

Then the pullback
$I^*\omega_{D}$ of $\omega_D$ to $Y_D$ is equal to the restriction $i_D^*\omega$
of $\omega$ to $Y_D$.
\end{prop}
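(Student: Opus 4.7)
The strategy is to realize $(G_D,\omega_D)$ for $D = \mathrm{graph}(\pi)$ as the symplectic reduction of $(Y_D, i_D^*\omega)$, in the sense of Cattaneo--Felder \cite{CatFel} and Crainic--Fernandes \cite{CraFer}. Once this identification is in place, $I : Y_D \to G_D$ is precisely the quotient map of the reduction, and the defining property of the reduced form immediately gives $I^*\omega_D = i_D^*\omega$.

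Concretely, I would first verify that $Y_D$ coincides with the cotangent path space used in Cattaneo--Felder: for $D = \mathrm{graph}(\pi)$ the constraint $(\dot\gamma_t,\alpha_t) \in D_{\gamma(t)}$ reads $\dot\gamma_t = \pi^\sharp(\alpha_t)$, which is exactly the definition of a $\pi$-cotangent path. Next, I would match our ambient symplectic form with theirs: our $\omega = d\lambda$ with $\lambda_{(\gamma,\alpha)}(\delta\gamma_t) = \int_0^1 \alpha_{\gamma(t)}(\delta\gamma_t)\, dt$ is, by the construction recalled in Section 4, the pointwise integration along $[0,1]$ of the canonical symplectic form of $T^*M$, which is precisely the Cattaneo--Felder 2-form on the path space.

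The coisotropy of $Y_D$ in $(Y,\omega)$ just established guarantees that $\ker(i_D^*\omega)$ is an integrable distribution on $Y_D$. In the cotangent-path setting its leaves are exactly the Lie algebroid homotopy classes, so symplectic reduction produces a well-defined $2$-form $\omega_D^{\mathrm{red}}$ on the quotient $Y_D/\!\sim$ with $I^*\omega_D^{\mathrm{red}} = i_D^*\omega$ by construction. By Crainic--Fernandes, $Y_D/\!\sim$ is a smooth manifold, namely the source-simply-connected Lie groupoid integrating $D$, and $\omega_D^{\mathrm{red}}$ is the multiplicative symplectic form making it the symplectic realization of $(M,\pi)$.

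The main obstacle I anticipate is the very last identification: showing that the reduced form $\omega_D^{\mathrm{red}}$ agrees with the multiplicative symplectic form $\omega_D$ supplied by the hypothesis. This rests on the uniqueness, up to isomorphism, of the symplectic groupoid integrating a Poisson manifold, together with a careful check of sign and normalization conventions between our setup and the Cattaneo--Felder / Crainic--Fernandes framework. Everything preceding that step is bookkeeping in the path-space description.
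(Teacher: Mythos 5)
Your proposal is correct and follows essentially the same route as the paper, whose proof is a one-line appeal to Cattaneo--Felder together with the fact that $G_D$ is the symplectic reduction of the coisotropic submanifold $Y_D\subset Y$, so that $I$ is the reduction quotient map and $I^*\omega_D=i_D^*\omega$ holds by construction of the reduced form. Your additional care about identifying the reduced form with the given multiplicative form $\omega_D$ (via uniqueness of the symplectic groupoid) only makes explicit what the paper leaves implicit.
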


\begin{proof}
This follows from \cite{CatFel} using the fact that $G_D$ is the
symplectic reduction of the coisotropic submanifold $Y_D\subset Y$.  
\end{proof}

After these preparations, we can come to the announced result. 

\begin{prop}  \label{prop_Y_D_subrackoid}
Let $D$ be an integrable Dirac structure induced by the graph of a Poisson
structure
$\pi$ on $M$. Then $Y_D$ is a subrackoid of $Y$, i.e. for all $y\in Y_D$ and
all bisections $\Tau\in{\rm Bis}(Y_D)$,
we have $\Tau\rhd y\in Y_D$, and for all bisections
$\Sigma,\Tau\in{\rm Bis}(Y_D)$, we have 
$\Sigma\rhd\Tau\in{\rm Bis}(Y_D)$. 
\end{prop}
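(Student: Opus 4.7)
My plan is to reduce the statement to Proposition \ref{proposition1} by producing, for each bisection $\Sigma \in \Bis(Y_D)$, a bisection $\overline{\Sigma}$ of $G_D$ whose associated Courant automorphism coincides pointwise in $t$ with the automorphism of $\T M$ that appears in the rackoid action formula.

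First I would construct $\overline{\Sigma}$ as follows. By definition of $\Bis(Y_D)$, for each $m \in M$ the cotangent path $s \mapsto (\phi_s(m), \eta_s(m))$ lies in $Y_D$, so applying the integration map $I\colon Y_D \to G_D$ fiberwise yields a smooth section $\overline{\Sigma} := I \circ \Sigma \colon M \to G_D$ of the source of $G_D$ whose underlying diffeomorphism is still $\phi_1$. Next I would identify the pullback $2$-form $\overline{\Sigma}^*\omega_D$ with $d\beta_\Sigma$. From Proposition \ref{proposition2} we have $I^*\omega_D = i_D^*\omega$ on $Y_D$, hence
$$\overline{\Sigma}^*\omega_D \,=\, \Sigma^*(I^*\omega_D) \,=\, \Sigma^*(i_D^*\omega) \,=\, (i_D\circ\Sigma)^*\omega \,=\, d\beta_\Sigma,$$
the last equality being the Section~$4$ identification of the pullback of the canonical symplectic form on $Y$ along a bisection (viewed as a section of the source) with $d\beta_\Sigma$. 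Proposition \ref{proposition1} applied to $\overline{\Sigma}$ then shows that the Courant automorphism of $\T M$ built from $\phi_1$ and $d\beta_\Sigma$ preserves $D$.

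For the action on elements, take $a = (\gamma, \theta) \in Y_D$. By Remark \ref{remark_automorphisms_standard_Courant_algebroid}, pushing Formula (\ref{rackoid_structure}) through $\varphi \colon Y \to {\mathcal C}^\infty([0,1], \T M)$ exhibits $\Sigma \rhd a$ as the pointwise-in-$t$ image of $(\dot\gamma_t, \theta_t)$ under the Courant automorphism associated to $(\phi_1, d\beta_\Sigma)$. Since $(\dot\gamma_t, \theta_t) \in D_{\gamma(t)}$ for all $t$ and this automorphism preserves $D$ by the previous step, I conclude $\Sigma \rhd a \in Y_D$. The statement for the rack product $\Sigma \rhd \Tau$ follows by applying the same argument to each $m$-fiber of $\Tau$, since the formula in Lemma \ref{lemma_formula_action_bisection} is exactly what one obtains by acting with $\Sigma$ fiberwise on the cotangent paths $s \mapsto (\psi_s(m), \zeta_s(m)) \in Y_D$.

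The main obstacle is the well-definedness and smoothness of the integration $I \circ \Sigma \colon M \to G_D$; this is precisely where the hypothesis that $D$ comes from an integrable Poisson structure intervenes (and is also why Proposition \ref{proposition2} is formulated in that setting). A secondary, purely notational, difficulty is tracking the halfway pullback conventions so as to match the rackoid formula with the automorphism from Proposition \ref{proposition1} on the nose.
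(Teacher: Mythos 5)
Your proposal is correct and follows essentially the same route as the paper: identify $d\beta_{\Sigma}$ with $\overline{\Sigma}^*\omega_D$ via Proposition \ref{proposition2}, then invoke Proposition \ref{proposition1} to see that the Courant automorphism $(\phi_1,d\beta_{\Sigma})$ preserves $D$, so the rack action (which is exactly this automorphism applied pointwise in $t$) preserves $Y_D$. Your writeup is in fact more explicit than the paper's, which leaves the construction of $\overline{\Sigma}=I\circ\Sigma$ and the treatment of the action on elements $a\in Y_D$ implicit.
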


\begin{proof}
Observe that the bisections of $Y_D$ are simply
$${\rm Bis}(Y_D)=\{\Sigma=(\phi,\eta)\in{\rm Bis}(Y)\,|\,\forall m\in M,\,\,\,
(\dot{\phi^m}_t,\eta_{(m,t)})\in D_{\phi^m(t)}\},$$
where we recall that for the path $\phi\in{\mathcal C}^{\infty}(M,M)$, the
corresponding family of paths in $M$ is $\phi^m$ (cf proof of Lemma
\ref{lemma_formula_action_bisection}).

Notice that the $2$-form in Proposition \ref{proposition1} is $\omega_D$,
while in our rack product, it is the $2$-form of $Y$, i.e. $\omega$, which
takes its place. Thanks to Proposition \ref{proposition2}, both forms coincide
on $Y_D$, and therefore  $d\beta_{\Sigma}=\Sigma^*\omega_D$ which implies then
$\Sigma\rhd\Tau\in{\rm Bis}(Y_D)$ for
$\Sigma,\Tau\in{\rm Bis}(Y_D)$. Thus $Y_D$ is a subrackoid of $Y$.

%The infinitesimal Dorfman bracket respects the Dirac structure by
%definition. By iteration, the exponential of the bracket action respects the
%Dirac structure. This implies that the rack product with all bisections in
%${\rm Bis}(Y_D)$ which have an underlying path
%$\gamma_t\in{\mathcal C}^{\infty}(M,M)$ which consists of diffeomorphisms for
%each $t$, respects the Dirac structure. Indeed, the path of diffeomorphisms
%has an infinitesimal generator whose flow is equal to $\gamma_t$ by unicity of
%the solution of an ordinary differential equation. The rack product with such a
%flow respects the Dirac structure for real analytic vector fields by the
%argument given in Remark \ref{exponential_as_a_flow}.
\end{proof}

Now we will induce an equivalence relation (in fact even a congruence relation)
on the cosiotropic subset $Y_D\subset Y$. The congruence relation on the
bisections ${\rm Bis}(Y_D)$ is the same as for ${\rm Bis}(Y)$ (see Equation
(\ref{congruence_relation})) with the difference that the family
$\{\Sigma_t\}_{t\in[0,1]}$ must have values in $Y_D$:

\begin{defi}  \label{definition_some_relations}
\begin{itemize}
\item[(a)] For $\Sigma,\Sigma'\in{\rm Bis}(Y_D)$, we define that
$\Sigma\sim\Sigma'$ in case
there exists $\{\Sigma_t\}_{t\in[0,1]}$ where $\Sigma_t\in Y_D$ for all
$t\in[0,1]$, $\Sigma_0=\Sigma$, $\Sigma_1=\Sigma'$ and
$i_{\epsilon}^*i_u\check{\omega}=0$.
\item[(b)] Call a path $\gamma$ in $Y_D$ a {\it homotopy} from $\gamma(0)$ to
  $\gamma(1)$ in case all families of bisections $\{\Sigma_t\}_{t\in[0,1]}$
  which include $\gamma$ at $m_0$ satisfy
  $i_{\epsilon}^*i_u\check{\omega}|_{m_0}=0$. 
  For $y,y'\in Y_D$, define $y\sim y'$ if there exists a homotopy
  $\gamma$ from $y$ to $y'$. 
\item[(c)]  For $y,y'\in Y_D$, define $y\approx y'$ if there exists $y_t$
with $y_0=y$ and $y_1=y'$ and $\dot{y}_t\in\ker(\omega_D)$ for all $t\in[0,1]$.
\end{itemize}
\end{defi}

Here we mean by ``the bisection $\Sigma$ includes the path $\gamma$ at $m_0$''
that for $\Sigma$, viewed as a map in
${\mathcal C}^{\infty}(M,{\mathcal C}^{\infty}([0,1],M))$, there exists $m_0$ such
that $\Sigma_{m_0}=\gamma$. Observe that the difference between the
equivalence relations $\sim$ and $\approx$ on $Y_D$ is that while
$i_{\epsilon}^*i_u\check{\omega}=0$ is true at all $m\in M$ for $\approx$, this is
true only in $m_0$ for $\sim$. 

The equivalence relation $\approx$ induces in turn an equivalence relation
$\approx$ on bisections, namely $\Sigma_0\approx\Sigma_1$ if there exists
$\Sigma_t$ from $\Sigma_0$ to $\Sigma_1$ such that
$\frac{\partial \Sigma_t}{\partial t}\in\ker(\omega_D)$ for all $t\in[0,1]$. 
Observe that $\Sigma_0\approx\Sigma_1$ for two bisections
$\Sigma_0,\Sigma_1\in{\rm Bis}(Y_D)$ implies $\Sigma_0\sim\Sigma_1$ and
$\Sigma_0\rhd-=\Sigma_1\rhd-$.

\begin{lem}  \label{lemma_congruence_relation_Y_D}
The relation (b) on $Y_D$ is a congruence relation, i.e. for all
$y,y'\in Y_D$ with $y\sim y'$ and all $\Tau\in{\rm Bis}(Y_D)$, we have
$\Tau\rhd y\sim\Tau\rhd y'$. 
\end{lem}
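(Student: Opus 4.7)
The plan is to construct an explicit candidate homotopy from $\Tau\rhd y$ to $\Tau\rhd y'$ and then verify the pointwise form condition via a direct adaptation of Lemma \ref{lemma_congruence_relation}. Given a homotopy $\gamma=\{\gamma_\epsilon\}_{\epsilon\in[0,1]}$ in $Y_D$ from $y$ to $y'$ based at some $m_0\in M$, and a bisection $\Tau=(\psi_t,\zeta_t)\in\Bis(Y_D)$, I would take $\gamma'_\epsilon := \Tau\rhd\gamma_\epsilon$ as the candidate. By the subrackoid property (Proposition \ref{prop_Y_D_subrackoid}) this is a well-defined path in $Y_D$ from $\Tau\rhd y$ to $\Tau\rhd y'$; its natural base point shifts to $m_0':=\psi_1^{-1}(m_0)$ because of the form of the rack action on elements.

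To check that $\gamma'$ is a homotopy in the sense of Definition \ref{definition_some_relations}(b), I would fix an arbitrary family $\{\Sigma'_t\}_{t\in[0,1]}$ in $\Bis(Y_D)$ including $\gamma'$ at $m_0'$ and verify $i_\epsilon^*i_u\check{\omega}'|_{m_0'}=0$. Since the rack action $\Tau\rhd(-)\colon\Bis(Y)\to\Bis(Y)$ is a smooth diffeomorphism by the Lie rackoid axioms, I would lift $\Sigma'_t$ to a smooth family $\Sigma_t$ with $\Tau\rhd\Sigma_t=\Sigma'_t$. Tracking base points through the explicit rack formula (\ref{action_bisection_on_bisection}), this $\Sigma_t$ automatically includes $\gamma$ at $m_0$, so the homotopy hypothesis on $\gamma$ yields $i_\epsilon^*i_u\check{\omega}|_{m_0}=0$ for the lifted family.

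The remaining step is to transfer this pointwise vanishing from $m_0$ to $m_0'$. Here I would reuse the computation that established Formula (\ref{rest_to_show}) in the proof of Lemma \ref{lemma_congruence_relation}: that identity was verified by restriction to an arbitrary path in $T^*M$ starting at a variable base point, and the boundary terms in the integration by parts vanish independently of that base point (since $\phi_0=\id_M$ and $\phi_1$ is a fixed diffeomorphism, both $\epsilon$-independent). Consequently the identity already holds pointwise, and applied at $m_0$ it delivers $i_\epsilon^*i_u\check{\omega}'|_{m_0'}=0$ for $\Sigma'_t=\Tau\rhd\Sigma_t$, which is exactly the desired homotopy condition for $\gamma'$.

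The main obstacle I anticipate is verifying that the lifted family $\Sigma_t$ truly remains in $\Bis(Y_D)$ rather than merely in $\Bis(Y)$, i.e. that the inverse of $\Tau\rhd(-)$ preserves the subrackoid. This is essentially a bookkeeping issue: the invariance of $Y_D$ under $\Tau\rhd(-)$ given by Proposition \ref{prop_Y_D_subrackoid}, combined with the bijectivity of the rack action, forces invariance under its inverse as well. A secondary and purely conceptual issue is the pointwise-versus-global distinction between the present lemma and Lemma \ref{lemma_congruence_relation}; fortunately, as noted above, the computation in the latter already proceeds pointwise, so no new analytic input is needed.
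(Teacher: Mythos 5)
Your proposal follows essentially the same route as the paper: push the homotopy and an including family of bisections forward by $\Tau\rhd(-)$, and transfer the pointwise condition $i_{\epsilon}^*i_u\check{\omega}|_{m_0}=0$ using the equivariance of the symplectic data under the rack product. The paper does this by invoking Corollary \ref{expression_dB} for $d\beta_{\Tau\rhd\Sigma_t}$ and observing that only the term $\psi_1^*d\beta_{\Sigma_t}$ survives $i_u$, whereas you re-run the computation behind Formula (\ref{rest_to_show}) and note that it holds pointwise; these are the same identity packaged differently, and your observation that (\ref{rest_to_show}) is an unconditional identity of $1$-forms (hence valid at the single point $m_0$) is correct, as is your tracking of the base point to $m_0'=\psi_1^{-1}(m_0)$. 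You are in fact more careful than the paper on one point: Definition \ref{definition_some_relations}(b) quantifies over \emph{all} families including the path, and you address this by lifting an arbitrary family through $\Tau\rhd(-)$, while the paper only exhibits the one transported family.

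The one weak spot is your justification of that lifting step: ``invariance of $\Bis(Y_D)$ under $\Tau\rhd(-)$ combined with bijectivity forces invariance under the inverse'' is not a valid inference --- a bijection of $\Bis(Y)$ can map $\Bis(Y_D)$ strictly into itself. To close this you should write down the inverse operation explicitly (it is again a composition of a diffeomorphism with a gauge transformation by a closed $2$-form, with $\psi_1$ replaced by $\psi_1^{-1}$ and the $2$-form transported accordingly) and check, via the same mechanism as Proposition \ref{proposition1} and Proposition \ref{proposition2}, that it preserves $\Bis(Y_D)$. This is routine and does not threaten the overall argument, but as written the step is asserted rather than proved.
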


\begin{proof}
Include the homotopy $\gamma$ between $y$ and $y'$ into a family of bisections
$\{\Sigma_t\}_{t\in[0,1]}$. Call the limiting bisections at $0$ and $1$ of
$\{\Sigma_t\}_{t\in[0,1]}$ $\Sigma$ and $\Sigma'$ respectively. The bisections
$\Sigma$ and $\Sigma'$ are {\it equivalent in $m_0\in M$}. 
We have to show that $\Tau\rhd\Sigma$ and $\Tau\rhd\Sigma'$ are still
{\it equivalent in some $m_1\in M$}. Denote $\Tau=(\psi_t,\zeta_t)$,
$\Sigma=(\phi,\eta)$ and $\Sigma'=(\phi',\eta')$. 
Our claim follows from the formula (see Corollary \ref{expression_dB})
$$d\beta_{\Tau\rhd\Sigma_t}=\psi_1^*d\beta_{\Sigma_t}-\psi_1^*\phi^*_{1,t}(\psi_1^{-1})^*
d\beta_{\Tau}+d\beta_{\Tau}.$$
Indeed, when computing the condition $i_{\epsilon}^*i_u\check{\omega}=0$ with
this formula, $i_u$
kills all terms which do not contain $dt$, thus we are left with the term in
$dt$ coming from $\psi_1^*d\beta_{\Sigma_t}$. One concludes by commuting
$\psi^*_1$ with $i_{\epsilon}^*$ and $i_u$. 
\end{proof}

\begin{cor}  \label{corollary_Y_D_quotient_rackoid}
The quotient $Y_D\,/\,\sim$ is a rackoid. 
\end{cor}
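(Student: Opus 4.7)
The plan is to observe that all the structural ingredients of a rackoid on $Y_D$ are already in place by Proposition \ref{prop_Y_D_subrackoid}, and the only task is to check that every piece of that structure descends through the quotient map $Y_D \to Y_D/\!\sim$. Concretely, $Y_D$ carries the restriction of the precategory structure of $Y$, together with a rack action of ${\rm Bis}(Y_D)$ on itself inherited from Equation (\ref{rackoid_structure}); to upgrade this to a rackoid structure on $Y_D/\!\sim$, I would verify (i) that source and target maps descend to the quotient, (ii) that the rack action is well defined modulo $\sim$, and (iii) that self-distributivity, which holds on $Y_D$, is inherited by the quotient.

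For (i), I would argue that if $y\sim y'$ via a homotopy $\gamma$ included in a family of bisections $\{\Sigma_t\}$ with $\Sigma_0=\Sigma$, $\Sigma_1=\Sigma'$, then the rigidity built into Equation (\ref{congruence_relation}) forces $\phi_1^{\Sigma_t}$ to be constant in $t$, so the endpoints of the underlying base paths of $y$ and $y'$ coincide. Step (ii) is the heart of the argument and is exactly Lemma \ref{lemma_congruence_relation_Y_D}: for every $\Tau\in{\rm Bis}(Y_D)$, the assignment $[y]\mapsto[\Tau\rhd y]$ is well defined. To complete the rackoid definition we must also check that the rack product on bisections respects the natural equivalence on ${\rm Bis}(Y_D)$ induced from Definition \ref{definition_some_relations}(a); this step repeats the computation of Lemma \ref{lemma_congruence_relation}, but now confined to $Y_D$, which is possible because $Y_D$ is a subrackoid and so the intermediate families $\{\Sigma_t\}$ needed in the homotopy argument stay inside ${\rm Bis}(Y_D)$.

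For (iii), once both sides of the action are known to descend, the self-distributivity identity on $Y_D/\!\sim$ is immediate: pick any representatives, apply Theorem \ref{main_theorem} in $Y$, and then pass back to equivalence classes using Proposition \ref{prop_Y_D_subrackoid} to stay within $Y_D$. The main obstacle I anticipate is step (ii) on the bisection side: one must verify that the rack product of two bisections in ${\rm Bis}(Y_D)$ stays equivalent to a product of any equivalent representatives, which requires combining Corollary \ref{expression_dB} with the coisotropy of $Y_D$ (so that $d\beta_\Sigma$ coincides with $\Sigma^*\omega_D$), exactly as in the proof of Lemma \ref{lemma_congruence_relation_Y_D} where the vertical term $i_u$ is shown to kill all but the prescribed contribution. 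Granted these verifications, $Y_D/\!\sim$ acquires the structure of a rackoid with the induced action, which is the content of the corollary.
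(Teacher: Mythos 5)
Your proposal is correct and follows essentially the same route as the paper, whose entire proof is to invoke Proposition \ref{prop_Y_D_subrackoid} (that $Y_D$ is a subrackoid) together with Lemma \ref{lemma_congruence_relation_Y_D} (that $\sim$ is a congruence relation). The additional verifications you spell out --- that source and target descend, that the bisection-level product respects the induced equivalence, and that self-distributivity passes to the quotient --- are routine checks the paper leaves implicit, so your write-up is a more detailed version of the same argument rather than a different one.
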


\begin{proof}
By Proposition \ref{prop_Y_D_subrackoid}, $Y_D$ is a subrackoid of $Y$, and
by Lemma \ref{lemma_congruence_relation_Y_D}, the equivalence relation is a
congruence relation and respects thus the classes.
\end{proof}

On the other hand, the symplectic reduction gives the Weinstein groupoid:

\begin{theo}[Cattaneo-Felder, Strobl]
Let $\pi$ be an integrable Poisson bivector on the manifold $M$. Then the set
of cotangent
paths $Y_{\pi}$ is a coisotropic subset of $Y$ and the corresponding
symplectic reduction gives the symplectic Weinstein groupoid integrating
$\pi$.
\end{theo}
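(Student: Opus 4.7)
The strategy is to assemble pieces that are already either proved in the present text or available in the literature, and then glue them by the Crainic--Fernandes integrability theorem. First, the Dirac structure associated with a Poisson bivector $\pi$ is the graph $D_\pi = \{(\pi^\sharp(\alpha),\alpha)\mid \alpha\in T^*M\}\subset \T M$, so $Y_\pi = Y_{D_\pi}$. Coisotropy of $Y_\pi$ inside $(Y,\omega)$ therefore already follows from the lemma proved before Proposition \ref{proposition1}, applied to $D=D_\pi$; no extra argument is needed there.

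Second, I would identify the characteristic distribution $\ker(i_D^*\omega)\subset TY_\pi$ with the distribution whose leaves are the Crainic--Fernandes $A$-homotopies of cotangent paths (this is the Cattaneo--Felder computation). The key point is that a tangent vector to $Y_\pi$ at a cotangent path $(\gamma,\alpha)$ lies in $\ker(i_D^*\omega)$ precisely when, after fixing endpoints, it is generated by an infinitesimal gauge transformation of the form used in \cite{CatFel}: perturb $\alpha$ by $d_t\beta + \text{(Poisson term)}$ for a time-dependent $1$-form $\beta$ vanishing at $t=0,1$, and perturb $\gamma$ correspondingly via $\pi^\sharp$. With this identification the quotient $Y_\pi/\ker(i_D^*\omega)$ is, tautologically, the set of $A$-homotopy classes of $A$-paths in the Lie algebroid $T^*M_\pi$, which is by definition the Weinstein groupoid $G(T^*M_\pi)$.

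Third, I would invoke Crainic--Fernandes \cite{CraFer}: because $\pi$ is integrable by hypothesis, the obstruction groups (the monodromy groups) vanish, the foliation by $A$-homotopies is regular, and the quotient $G(T^*M_\pi)$ is a smooth (finite-dimensional) manifold; it inherits a groupoid structure from concatenation of cotangent paths. By general coisotropic reduction, the $2$-form $i_D^*\omega$ descends to a closed $2$-form $\omega_\pi$ on $G(T^*M_\pi)$; nondegeneracy of $\omega_\pi$ is exactly the statement that we have quotiented by the full null distribution. Finally, multiplicativity of $\omega_\pi$ follows from the fact that composition in $G(T^*M_\pi)$ is induced by concatenation of paths and $\omega$ on $Y$ is additive under concatenation (by its integral definition (b) in Section 4.1), together with Proposition \ref{proposition2}.

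I expect the main obstacle to be the second step: a careful verification that the kernel of the pulled-back symplectic form on $Y_\pi$ coincides, away from the boundary conditions, with the infinitesimal $A$-homotopy distribution of Crainic--Fernandes. The computation is a local coordinate manipulation using $\omega=\sum dq_i\wedge dp_i$ and the Dirac condition $\dot\gamma = \pi^\sharp(\alpha)$, and the only nontrivial input is handling the endpoint constraints so that the quotient lands on the groupoid rather than on a larger space. All remaining pieces (smoothness of the quotient, descent and nondegeneracy of $\omega_\pi$, multiplicativity, and identification with the Weinstein groupoid) are then either formal or direct citations of \cite{CatFel} and \cite{CraFer}.
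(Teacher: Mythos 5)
Your outline is mathematically reasonable, but it is doing far more work than the paper does: the theorem is attributed to Cattaneo--Felder and Strobl, and the paper's entire proof consists of citations --- coisotropy of $Y_\pi$ is referred to Schaller--Strobl, Klim\v{c}\'{\i}k--Strobl, Cattaneo and Cattaneo--Felder (where it is in fact an \emph{if and only if}: coisotropy of the cotangent paths characterizes Poisson bivectors among bivectors, a point your proposal drops), and the identification of the symplectic reduction with the Weinstein groupoid is referred wholesale to \cite{CatFel}. Your reconstruction of that argument --- coisotropy from the general Dirac lemma, identification of the null foliation of $i_D^*\omega$ with the Crainic--Fernandes $A$-homotopy foliation, smoothness from integrability, descent and multiplicativity of the reduced form --- is the correct skeleton of the Cattaneo--Felder proof, and you rightly flag the second step as the one carrying all the content; but as written it remains a sketch, since that identification of distributions is precisely the nontrivial computation and you only describe it. Two cautions if you were to flesh this out inside this paper: first, invoking Proposition \ref{proposition2} for multiplicativity is circular here, because the paper's proof of Proposition \ref{proposition2} itself appeals to the fact that $G_D$ is the symplectic reduction of $Y_D$, i.e.\ to the very theorem under discussion; second, the paper's own coisotropy lemma for general Dirac structures is logically sufficient for the first claim, but the theorem as stated in the literature is the sharper equivalence, so deriving only the forward implication from that lemma proves slightly less than what the cited sources establish. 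In short: same route as the underlying literature, a more informative write-up than the paper's citation-only proof, but not yet a complete argument at the one step that matters.
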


\begin{proof}
Cotangent paths $Y_{\pi}$ for a bivector $\pi$ form a coisotropic subset if and only if $\pi$ is a Poisson
bivector. This statement appears in different forms in the literature, cf
Equation (10) in \cite{SchStr}, Equation (7) in \cite{KliStr}, Theorem 1.1 in
\cite{Cat} and Theorem 3.1 in \cite{CatFel}.

The fact that the symplectic reduction in the Poisson case gives the Weinstein
groupoid is shown in \cite{CatFel}.
\end{proof}

\begin{rem}
We believe that this is valid for a general integrable Dirac structure. A hint
in this direction is Theorem $2$ (p. 22) in \cite{KSS}.
\end{rem}

Thus in our framework, $Y_D/\sim$ is the Weinstein groupoid.
In order to link this approach to Crainic-Fernandes framework, we prove the
following:

\begin{prop}
The equivalence relation $\sim$ on $Y_D$ is the one given by the coisotropic
foliation, i.e. relations (b) et (c) of Definition \ref{definition_some_relations} coincide. 
\end{prop}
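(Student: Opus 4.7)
The plan is to reduce both relations (b) and (c) to the same pointwise infinitesimal condition, namely that the tangent vector to the connecting path in $Y_D$ lies in $\ker(i_D^*\omega)$. The key input will be Proposition \ref{proposition2}, which asserts that $I^*\omega_D = i_D^*\omega$ on $Y_D$. This identification shows that relation (c), which requires $\dot y_t \in \ker(\omega_D)$, is equivalent to $\omega(\dot y_t, w) = 0$ for every $w \in T_{y_t}Y_D$. Simultaneously, Lemma \ref{new_expression_congruence_relation}(b) rewrites the defining condition $i_\epsilon^*i_u\check{\omega}|_{m_0} = 0$ in relation (b) as the pairing relation $\omega_Y(\dot\gamma_t, T\Sigma_t(\delta m_0)) = 0$ for every $\delta m_0 \in T_{m_0}M$ and every family of bisections $\{\Sigma_t\} \subset {\rm Bis}(Y_D)$ including $\gamma$ at $m_0$.

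The direction (c) $\Rightarrow$ (b) is then immediate: each $T\Sigma_t(\delta m_0)$ lies in $T_{\gamma(t)}Y_D$ because each $\Sigma_t$ is a bisection of $Y_D$, so the hypothesis $\dot y_t \in \ker(i_D^*\omega)$ forces the pairing in (b) to vanish. The converse (b) $\Rightarrow$ (c) reduces to a spanning claim: the tangent vectors $\{T_{m_0}\Sigma(\delta m_0)\}$, as $\Sigma$ ranges over bisections of $Y_D$ with $\Sigma(m_0) = y$ and $\delta m_0$ ranges over $T_{m_0}M$, span the entire tangent space $T_yY_D$. Once this spanning is established, the universal quantifier in the definition of a homotopy promotes the pairing condition to all of $T_{\gamma(t)}Y_D$, yielding exactly (c).

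The main obstacle will be this spanning lemma. Given $v \in T_yY_D$, i.e. an infinitesimal variation of the cotangent path $y$ preserving the Dirac constraint to first order, one must exhibit a genuine bisection $\Sigma \in {\rm Bis}(Y_D)$ with $\Sigma(m_0) = y$ and $T_{m_0}\Sigma(\delta m_0) = v$ for some chosen $\delta m_0 \in T_{m_0}M$. The construction is local: fix coordinates around $m_0$ and a direction $\delta m_0$, prescribe $\Sigma(m_0 + r\delta m_0)$ to match $y + rv$ to first order in $r$ using a cutoff function, and glue to a fixed bisection of $Y_D$ through $y$ at $m_0$ away from a neighborhood of $m_0$. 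The delicate point is ensuring that $\Sigma(m)$ lies in $Y_D$ for every $m$ and not merely to first order at $m_0$; this calls for an implicit-function-type argument in the Fr\'echet category, exploiting the smooth submanifold structure of $Y_D \subset Y$ that comes from the integrability of the Dirac structure.
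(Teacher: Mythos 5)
Your proposal follows essentially the same route as the paper: both reduce relations (b) and (c) to the vanishing of $\omega$ paired against, respectively, the vectors $T\Sigma_t(\delta m)$ and all of $T_{y_t}Y_D$, observe that one inclusion is immediate, and reduce the converse to the claim that tangent vectors of bisections of $Y_D$ through a given point span the whole tangent space of $Y_D$ there. The paper justifies this spanning step by a contractibility/extension property of $Y_D$ (its elements being sections of a vector bundle), at about the same level of detail as your cutoff-function sketch, so your argument is a faithful match.
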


\begin{proof}
This assertion follows from the comparison of the two relations discussed
above: On the one hand, the tangency to the leaves means for families of
bisections $\{\Sigma_t\}_{t\in[0,1]}$ in $Y_D$
$$\omega_{Y_D}(\frac{\partial \Sigma_t}{\partial t}(m),\delta\Sigma)=0$$
for every tangent vector $\delta\Sigma$ to $Y_D$. We claim that this property
is equivalent to the property (see
Lemma \ref{new_expression_congruence_relation}) which satisfies the family of
bisections
$\{\Sigma_t\}_{t\in[0,1]}$ in $Y_D$ in our congruence relation, namely
$$\omega_{Y_D}(\frac{\partial \Sigma_t}{\partial t}(m),T\Sigma_t(\delta m))=0$$
for every tangent vector $\delta m$ to $M$.

In order to show this, we have to construct for a given tangent vector
$\delta\Sigma$ to $Y_D$ a family $\{\Sigma_t\}_{t\in[0,1]}$ in $Y_D$ and a tangent
vector $\delta m$ to $M$ such that $T\Sigma_t(\delta m)=\delta\Sigma$. This
follows from a contractibility property of $Y_D$. As elements of $Y_D$ are sections of a vector bundle, one can contract the fiber component to zero and then the resulting path to the trivial path. 
This means that $Y_D$ permits the extension of sections and bisections, and we can thus find a bisection passing through a given point or even passing
through a given tangent vector. 
\end{proof}

\begin{prop}  \label{proposition_identification_quotient_Weinstein_groupoid}
The rackoid structure on $Y_D/\sim$ inherited from that of $Y_D$ and the
rackoid structure underlying the groupoid structure on the Weinstein
groupoid $G:=Y_D/\sim$ coincide.  
\end{prop}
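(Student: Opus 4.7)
The plan is to show that both rack products on $Y_D/\sim$ factor through the same automorphism datum: the map
$$p:{\rm Bis}(Y_D)\to\Aut(\T M)=\Omega^2_{\rm cl}(M)\rtimes{\rm Diff}(M),\quad
\Sigma=(\phi_s,\eta_s)\mapsto (d\beta_{\Sigma},\phi_1),$$
followed by the induced automorphism of the source-connected Weinstein groupoid $G_D$.

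First I would identify the rack product on $Y_D/\sim$ inherited from $Y_D$. Given $\Sigma\in{\rm Bis}(Y_D)$ and $[a]\in Y_D/\sim$, the class $[\Sigma\rhd a]$ is well defined by Lemma \ref{lemma_congruence_relation_Y_D}, and is represented by Formula (\ref{rackoid_structure}). By Remark \ref{remark_automorphisms_standard_Courant_algebroid} and the augmented rack observation following Theorem \ref{main_theorem}, this action factors through $p(\Sigma)=(d\beta_{\Sigma},\phi_1)$; Proposition \ref{proposition1} then tells us that $p(\Sigma)$ restricts to an automorphism of the Lie algebroid $D\subset\T M$, which, by source-connectedness of $G_D$, integrates to a unique groupoid automorphism of $G_D$. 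This is the incarnation in $G_D$ of the inherited rack action.

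Next I would describe the rack product underlying the groupoid structure on $G_D$. By Proposition 5.1 of \cite{LauWag}, a bisection $\overline{\Sigma}$ of $G_D$ acts on $g\in G_D$ by conjugation $\overline{\Sigma}(t(g))\cdot g\cdot\overline{\Sigma}(s(g))^{-1}$. For the symplectic groupoid $(G_D,\omega_D)$, the infinitesimal (adjoint) version of this conjugation is precisely the automorphism of $D\subset\T M$ covered by $\overline{\Sigma}$ in the sense of Proposition \ref{proposition1}, namely $((\phi_1)_*,\phi_1^*)$ composed with the shear by $\overline{\Sigma}^*\omega_D$. By Proposition \ref{proposition2}, the pullback $\overline{\Sigma}^*\omega_D$ equals $d\beta_{\Sigma}$, so the conjugation action also corresponds to the datum $(\phi_1,d\beta_{\Sigma})$.

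Both rack actions on $G_D$ are therefore integrations of the same automorphism of the Lie algebroid $D$. Since $G_D$ is source-connected, such an integration is unique (as used in the proof of Proposition \ref{proposition1} via \cite{Mack} Proposition 3.6.3), so the two rack actions must coincide. The main obstacle I foresee is the careful verification that Formula (\ref{rackoid_structure}), applied at the path level inside $Y_D$, truly descends under the integration map $I:Y_D\to G_D$ to the prescribed conjugation in $G_D$; the crucial input is precisely the identity $\overline{\Sigma}^*\omega_D=d\beta_{\Sigma}$ from Proposition \ref{proposition2}, together with the description of how $\Aut(\T M)$ acts on $\T M$ given in Remark \ref{remark_automorphisms_standard_Courant_algebroid}.
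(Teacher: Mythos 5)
Your proposal is correct and follows essentially the same route as the paper: both arguments reduce the comparison to the fact that each action depends only on the pair $(\phi_1,d\beta_{\Sigma})$ acting as in Proposition \ref{proposition1}, with Proposition \ref{proposition2} supplying the identification $d\beta_{\Sigma}=\overline{\Sigma}^*\omega_{G_D}$. The one step you defer as ``the main obstacle'' --- that the path-level formula descends under $I:Y_D\to G_D$ to the conjugation --- is exactly where the paper invokes the Crainic--Fernandes description of the Weinstein groupoid by homotopy classes of $A$-paths (namely that $\Ad_{\sigma}$ applied to an $A$-path representing $g$ is an $A$-path representing $\Ad_{\sigma}(g)$), which closes the loop more directly than your appeal to uniqueness of integration of the algebroid automorphism.
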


\begin{proof}
%This assertion follows from the fact that both structures (i.e. the
%concatenation product on the Weinstein groupoid and the rackoid product on
%$Y_D$ which induces the rackoid product on $Y_D\,/\sim$) satisfy the same
%differential equation with the same initial condition.

The proof goes as follows. Let us describe the action of a bisection $\sigma$
of a source simply connected Lie groupoid $G$ on itself. Choose
some $g \in G$ and let $a$ be an $A$-path which represents $g$,
with $A$ the Lie algebroid of $G$. Then $\Ad_{\sigma}(a)$ is again
an $A$-path and it represents $\Ad_{\sigma}(g)$. (Recall from
\cite{CraFer}, Section 1, that the set of homotopy classes of $G$-paths is
homeomorphic to the set of homotopy classes of $A$-paths in the sense of
\cite{CraFer}). 
Here $\Ad_{\sigma}$ stands
for the adjoint action of the bisection $\sigma$ on the Lie algebroid $A$.

For the Lie groupoid $G_D$ integrating a Dirac structure $D$ arising from a
Poisson structure, the adjoint action of bisections goes as follows. To a
bisection $\Sigma$ of $G_D$, one associates a diffeomorphism $\phi_1$ of
the base and a closed $2$-form: the pull-back $\Sigma^* \omega_{G_D}$ of
the symplectic $2$-form $\omega_{G_D}$ on $G_D$ by the source map.
The adjoint action of $\Sigma$ on the Lie algebroid $D$ is then the
transformation described in Proposition \ref{proposition1}.

Let us compare this action with the rackoid product $\rhd$. For any bisection
$\Sigma=(\phi_t,\eta_t)$ of $Y$, and for any $a =(\gamma,\theta) \in Y$, the
rackoid product
$\Sigma \rhd a $ is obtained by associating to $a$ the path $(\gamma',\theta)$
in $\T M$ and then acting on $\T M$ with the help of the
transformation $\overline{\Sigma}$ defined as in Proposition
\ref{proposition1}. By this procedure, we obtain a path in $\T M$ lying
over the path $\phi_1 \circ \gamma$. Its component in $T^*M$ is an element of
$Y$ which is equal to $\Sigma \rhd a$. Again, the rackoid product
$\Sigma \rhd-$ depends only on the pair $(\phi_1, d\beta_{\Sigma}) $, with
$\phi_1,\beta_\Sigma$ as in Definition \ref{definition_beta_Sigma}.
In particular, for a bisection
$\Sigma$ of $Y_D$, the rackoid product $\Sigma \rhd -$ depends only on
the image of $ \Sigma$ in the set of bisections of $G_D$ and, in view of
Proposition \ref{proposition2}, the closed $2$-form $ d \beta_{\Sigma}$ coincides
with the closed $2$-form $\Sigma^* \omega_{G_D}$ described above.

In conclusion, the rack product $ \Sigma \rhd a $, with $ \Sigma $ a
bisection of $Y_D$ and $ a \in Y_D$ is equal to ${\rm Ad}_{\sigma}(a)$, with
$\sigma$ the image of $\Sigma$ in the set of bisection of $Y_D$.
In view the first paragraph, this completes the proof of the proposition.

\end{proof}

\end{document}